\documentclass[11pt]{amsart}
\usepackage{amsmath,amsfonts,amsthm,amsopn,amssymb,latexsym,soul}
\usepackage{cite}
\usepackage{color,enumitem,graphicx,esint}
\usepackage{upgreek,bm} 
\usepackage[colorlinks=true,urlcolor=blue,
citecolor=red,linkcolor=blue,linktocpage,pdfpagelabels,
bookmarksnumbered,bookmarksopen]{hyperref}
\usepackage[english]{babel}
\usepackage[left=2.61cm,right=2.61cm,top=2.72cm,bottom=2.72cm]{geometry}
\usepackage[hyperpageref]{backref}
\usepackage[colorinlistoftodos]{todonotes}

\makeatletter
\providecommand\@dotsep{5}
\def\listtodoname{List of Todos}
\def\listoftodos{\@starttoc{tdo}\listtodoname}
\makeatother

\numberwithin{equation}{section}
\newtheorem{theorem}{Theorem}[section]
\newtheorem{lemma}[theorem]{Lemma}

\newtheorem{definition}[theorem]{Definition}
\newtheorem{proposition}[theorem]{Proposition}
\newtheorem{remark}[theorem]{Remark}
\newtheorem{corollary}[theorem]{Corollary}

\newcommand{\R}{\mathbb{R}}

\newcommand{\RN}{{\mathbb{R}^N}}
\newcommand{\LN}{{\mathbb{L}^{N+1}}}

\DeclareMathOperator{\dist}{dist} 
\DeclareMathOperator{\supp}{supp} 
\renewcommand{\le}{\leqslant}

\renewcommand{\d }{\delta }

\newcommand{\n }{\nabla }
\newcommand{\e }{\epsilon }

\renewcommand{\L}{\mathbb{L}}

\newcommand{\X}{\mathcal{X}}

\newcommand{\N}{\mathbb{N}}

\newcommand{\D }{D^{1,2}(\RN)}
\newcommand{\irn }{\int_{\RN}}

\def\bbm[#1]{\mbo\X{\boldmath $#1$}}
\newcommand{\beq }{\begin{equation}}
\newcommand{\eeq }{\end{equation}}

\def\Xint#1{\mathchoice 
  {\XXint\displaystyle\textstyle{#1}}%
  {\XXint\textstyle\scriptstyle{#1}}%
  {\XXint\scriptstyle\scriptscriptstyle{#1}}%
  {\XXint\scriptscriptstyle\scriptscriptstyle{#1}}%
  \!\int} 
\def\XXint#1#2#3{{\setbox0=\hbox{$#1{#2#3}{\int}$} 
  \vcenter{\hbox{$#2#3$}}\kern-.5\wd0}} 
\def\Mint{\Xint -}

\begin{document}

\title[A sharp gradient estimate  for the prescribed mean curvature equation]
{A sharp gradient estimate and $W^{2,q}$ regularity for the prescribed mean curvature equation in the Lorentz-Minkowski space}
\author{D\MakeLowercase{enis} Bonheure, A\MakeLowercase{lessandro} Iacopetti}

\thanks{Research partially supported by FNRS and by Gruppo Nazionale per l'Analisi Matematica, la Pro\-ba\-bi\-li\-t\`a e le loro Applicazioni (GNAMPA) of the Istituto Nazionale di Alta Matematica (INdAM). Denis Bonheure is supported by the Fondation Francqui as a Francqui Research Professor and by an ARC Avanc\'e 2020 at ULB.
}
\address[Denis Bonheure]{Research Francqui Professor, D\'epartement de math\'ematique, Universit\'e Libre de Bruxelles, Campus de la Plaine - CP214 boulevard du Triomphe, 1050 Bruxelles, Belgium}
\email{denis.bonheure@ulb.be}
\address[Alessandro Iacopetti]{Dipartimento di Matematica ``Giuseppe Peano'', Universit\`a degli Studi di Torino, via Carlo Alberto, 10 -- 10123 Torino, Italy}
\email{alessandro.iacopetti@unito.it}
\subjclass[2010]{35J93,35Q60, 35B65}
\keywords{Prescribed mean curvature, Lorentz-Minkowski space, Regularity, Non smooth operators, Born-Infeld electrostatic equation}

\begin{abstract}
We consider the prescribed mean curvature equation for entire spacelike hypersurfaces in the Lorentz-Minkowski space, namely
\begin{equation*}
-\operatorname{div}\left(\displaystyle\frac{\nabla u}{\sqrt{1-|\nabla u|^2}}\right)= \rho \quad \hbox{in }\mathbb{R}^N,
\end{equation*} 
where $N\geq 3$. We first prove a new gradient estimate for classical solutions with smooth data $\rho$. As a consequence we obtain that the unique weak solution of the equation satisfying a homogeneous boundary condition at infinity is locally of class $W^{2,q}$ and strictly spacelike in $\RN$, provided that $\rho\in L^q(\mathbb{R}^N) \cap L^m(\RN)$ with $q>N$ and $m\in[1,\frac{2N}{N+2}]$. 
\end{abstract}

\maketitle
\tableofcontents

\section{Introduction}

\noindent

The Lorentz-Minkowski space, denoted by $\LN$, is defined as the vector space $\R^{N+1}$ endowed with the symmetric bilinear form
\beq\label{def:Lorentflatzmetric}
(\mathbf{x},\mathbf{y})_{\LN} := x_1y_1+\ldots+x_Ny_N-x_{N+1}y_{N+1}.
\eeq
A hypersurface $M\subset \LN$ is said to be spacelike if the induced metric on the tangent space $T_{\mathbf{p}}M$, i.e. the restriction of $(\cdot,\cdot)_{\LN}$ to $T_{\mathbf{p}}M$, is positive definite at any point ${\mathbf{p}}\in M$.

We consider spacelike hypersurfaces that can be expressed globally as cartesian graphs, also called \emph{vertical graphs}. In particular, we focus on the case of entire vertical graphs, namely cartesian graphs of functions defined in the whole of $\RN$. We recall that if $u\in C^1(\RN)$ and $M=\mathrm{graph}(u)$ is the associated cartesian graph then $M$ is spacelike if and only if $|\nabla u|<1$ in $\RN$ (see Section 2), and in that case $u$ is said to be \emph{strictly spacelike}.  The prescribed mean curvature equation for entire spacelike vertical graphs in the Lorentz-Minkowski space is 
\begin{equation}\label{eq:BI2}
-\operatorname{div}\left(\displaystyle\frac{\nabla u}{\sqrt{1-|\nabla u|^2}}\right)= \rho \quad \hbox{in }\mathbb{R}^N,
\end{equation} 
where $\rho:\RN \to \R$ is a given function that prescribes the mean curvature pointwise. 
 Maximal, that is when $\rho = 0$, and constant mean curvature spacelike hypersurfaces are important in general relativity as first emphasized in the fundamental paper of Lichnerowicz \cite{Lich}. They were later used to study dynamical aspects of general relativity or the structure of the singularities in the space of solutions of Einstein’s equations and in the first proof of the positivity of gravitational mass. We refer to \cite{Ba2,Ba3,ChY,FMM,MAR,SY1} and the many references therein.
It is interesting to notice that the nonlinear differential operator in the left hand side of \eqref{eq:BI2} naturally appears  in other contexts, as for instance in the nonlinear electrodynamics of Born and Infeld 
\cite{B1,B2, BI,BI2,Schr} and in string theory, see e.g. \cite{Gibb98}. Calabi \cite{CE} showed in 1968 that Equation \eqref{eq:BI2} with $\rho=0$ has the Bernstein property in dimension $N\le 4$, that is any solution $u$ has to be affine. 
Cheng and Yau \cite{CY} completed this so-called Calabi-Bernstein's theorem and proved in fact its parametric version, i.e. in every dimension, the only maximal space-like hypersurface which is a closed subset of the Lorentz-Minkowski space is a linear hyperplane. The validity of the Bernstein property in every dimension contrasts with 
the less rigid euclidian case \cite{Bern,Sim,Bom,Alm,Flem,SSY}. The generalization to entire area maximizing hypersurfaces is attributed to Bartnik, see \cite[Theorem F]{Eck}. 
Later Treibergs  \cite{T82} tackled the case of entire spacelike hypersurfaces of constant mean curvature. Extensions in Lorentzian product spaces and generalized Robertson-Walker spacetimes were also considered, see for instance \cite{Al,Alb,Pel} and the citations therein. 

The Dirichlet problem for spacelike vertical graphs in $\L^{N+1}$ was solved by Bartnik and Simon in their seminal paper \cite{BS}, and Gerhardt \cite{GH} extended those results to the case of vertical graphs contained in a Lorentzian manifolds which can be expressed as a product of a Riemannian manifold times an interval. Solutions with singularities have been considered in \cite{ Eck,Kl1, Kl2, KM}.

Bayard \cite{Bayard} studied the more general problem of prescribed scalar curvature. For radial graphs, to our knowledge, the only available result concerns entire spacelike hypersurfaces with prescribed scalar curvature which are asymptotic to the light-cone \cite{Bayard1} and our contribution \cite{BIA2} where we investigate the existence and uniqueness of spacelike radial graphs spanning a given boundary datum lying on the hyperbolic space $\mathbb H^N$.

Unbounded data $\rho$ were first considered in \cite{BDP}, but only in the radial case, and in \cite{BIA, Haa}, where the regularity of the solution was the object of attention. 

For weak solutions to the $p$-Laplacian equation with finite mass Borel measure data $\mu$, 
$$-{\rm div}(|\nabla u|^{p-2}\nabla u)=\mu \quad\hbox{in }\mathbb{R}^N,$$
with $p \geq 2$, 
Kuusi and Mingione proved in \cite{KUM2013} the pointwise gradient estimate
$$|\nabla u(x)|^{p-1} \le c(N, p) {\mathrm{I}}^{|\mu|}_1(x,\infty),$$
where
\begin{equation*}
{\mathrm{I}}^{|\mu|}_1(x,\infty):=\int_0^{+\infty}\frac{|\mu|(B_t(x))}{t^N}dt 
\end{equation*}
is the linear Riesz potential associated with $|\mu|$. In particular, if $\mu$ is an $L^q$-datum with $q<N$ which is also locally $L^s$ for some $s>N$, then the solution has a bounded gradient.

More in general, as explained in \cite{BIA}, solutions of the regular quasilinear, possibly degenerate, equation of the form $$
-{\rm div}\left(\frac{g(|\nabla u|)}{|\nabla u|}\nabla u\right) =\mu
$$
in a bounded domain (see \cite{BA} for the assumptions on $g$) satisfy the estimate
\begin{equation}\label{potentialest}
g(|\nabla u(x)|)\le c {\mathrm{I}}_1^{|\mu|}(x,2R)+c g\left(\Mint_{B_R(x)}|\nabla u|dy\right) \,,
\end{equation}
where $c>0$ depends only on $N$
and ${\mathrm{I}}_1^{|\mu|}(x,2R)$ is the truncated linear Riesz potential (see \cite[Section 1]{BA}, or \cite{KUM}). We also refer to  \cite{KUM2013, KUM2014,Ming11,DM, KUM}  for similar estimates for equations driven by the $p$-Laplacian (or more general operators). If this formal inversion of the divergence operator could be justified also for the singular operator appearing in \eqref{eq:BI2}, namely
\[\mathcal{Q}^-(u):=-\operatorname{div}\left(\displaystyle\frac{\nabla u}{\sqrt{1-|\nabla u|^2}}\right),\]
one would derive the estimate 
\begin{equation}\label{eq1:intro}
\frac{|\nabla u(x)|}{\sqrt{1-|\nabla u(x)|^2}} \le c \int_0^\infty\frac{\int_{B_t(x)}|\rho(y)|\, dy}{t^N}dt,
\end{equation}
and therefore the norm of the gradient of the solution would stay away from $1$ as soon as $\rho\in L^s(\R^N)$ with $s<N$ and $\rho$ is locally $L^q(\R^N)$ for some $q>N$. A simple explicit example shows that the condition $\rho\in L_{loc}^q(\R^N)$ for some $q>N$ is sharp, see \cite{BIA}.
It seems quite hard and unlikely to obtain the estimate \eqref{eq1:intro}, at least in a direct way. 
Indeed the mean curvature operator $\mathcal{Q}^-$
does not satisfy the growth and ellipticity conditions appearing in the so-called Nonlinear Calder\'on-Zygmund theory, see \cite[(1.2)]{DM, KUM} or \cite[(4.9)]{MING}. Roughly speaking, the Nonlinear Calder\'on-Zygmund theory is modelled around the $p$-Laplacian as reference operator, which is singular or degenerate as the gradient of $u$ is zero, a point, but $\mathcal{Q}^-$ is singular as the gradient lies on the unit sphere, so from the structural point of view they are clearly distinct operators.

Our main result is a sharp gradient estimate for smooth classical solutions of \eqref{eq:BI2}, see Theorem \ref{teo:mainestimate}, which extends and refines Bartnik and Simon estimates, see \cite{BS,BIA}. Among the several possible applications, this gradient estimate is then used to deduce $W^{2,q}_{\rm loc}$ regularity and actually  solve \cite[Conjecture 1.4 ]{BIA}, with an explicit H\"older exponent (see Theorem \ref{teo:w2qregmin} and Corollary \ref{cor:teo:w2qregmin}), as further described in this section.\\

We adopt the following notations and definitions:
\begin{itemize}
\item $x=(x_1,\ldots,x_N)$ denotes a point in $\RN$, while $\mathbf{x}=(x_1,\ldots,x_{N+1})$ denotes a point in $\LN$;
\item $f_i$ or $\frac{\partial f}{\partial x_i}$ denote the partial derivatives of $f:\R^N\to \R^k$, $N,k\in\N^+$, and if $N=1$ we simply use $D_t f$ or $f^\prime$ to denote the standard derivative;
\item $\nabla f$, $D^2 f$ denote, respectively, the gradient and the Hessian matrix of $f:\RN\to \R$;
\item $(\cdot,\cdot)_{\RN}$ denotes the Euclidean scalar product in $\RN$; 
\item $|\cdot|$ denotes the Euclidean norm of a vector or a matrix;
\item $B_R(x_0):=\{x\in \R^N;  \ |x-x_0|<R\}$ denotes the Euclidean ball of radius $R$ centred at $x_0$;
\item  $u\in C^{0,1}(\RN)$ is said to be:
	\begin{itemize}
		\item {\em weakly spacelike} if $|\n u|\le 1$ a.e. in $\RN$;
		\item {\em spacelike} if $|u(x)-u(y)|<|x-y|$ whenever $x,y\in\RN$, $x\neq y$;
		\item {\em strictly spacelike} if $u\in C^1(\RN)$ and $|\n u|< 1$ in $\RN$;
\end{itemize}
\item given a strictly spacelike function $u\in C^1(\RN)$, we use the notation $v$ to denote the function
\begin{equation}\label{eq:v}
v(x):=\sqrt{1-|\nabla u(x)|^2} \  \ \ x\in \RN;
\end{equation}
\item  the projection of the Lorentz ball of radius $R$ centred at $x_0\in\RN$, associated to a spacelike function $u \in C^{0,1}(\RN)$, is denoted by
$$K_R(x_0):=\{x \in \R^N; \ [|x-x_0|^2-(u(x)-u(x_0))^2]^{1/2}<R \};$$
\item $\omega_N$ denotes the volume of the Euclidean unit ball in $\R^N$;
\item $2_*$ denotes the conjugate H\"older exponent of the critical Sobolev exponent, namely $2_*:=(2^*)^\prime=\frac{2N}{N+2}$, where $2^*:=\frac{2N}{N-2}$, $N\geq 3$;
\item $|\cdot|_q$ denotes the standard norm in $L^q(\RN)$, $q\geq 1$ or $q=\infty$, and $|\cdot|_{q,E}$ denotes the standard norm in $L^q(E)$, where $E\subset \RN$ is a Lebesgue measurable set;
\item $D^{1,2}(\RN)$ is the completion of $C_c^\infty (\RN)$ with respect to the $L^2$ norm of the gradient;
\item $\X$ is the convex set defined by
\begin{equation}\label{eq:defsetX}
\X:=D^{1,2}(\RN)\cap \left\{u\in C^{0,1}(\RN) ;\ |\nabla u|_\infty \le 1\right\},
\end{equation}
and equipped with the $D^{1,2}(\RN)$ norm;
\item $\X^*$ denotes the dual of $\X$ and $\langle \cdot, \cdot \rangle$ denotes the duality pairing between $\X^*$ and $\X$. 
\end{itemize}

\begin{theorem}\label{teo:mainestimate}
 Assume $N\geq 3$, $x_0 \in \RN$, $R>0$, $\rho \in C^1(\RN)$, $q>N$ and $\beta:=\frac{2N}{q}$.
Let $u \in C^3(\RN)$ be a strictly spacelike classical solution of \eqref{eq:BI2} and suppose that $K_R(x_0)$ is bounded. Then there exist $\gamma \in \left(0,\frac{1}{N}\right)$ and $c>0$, both depending on $N$ only, such that 
 \begin{eqnarray}\label{eqTesiProp2}
\displaystyle  \omega_Nv^\gamma(x_0)\!\!\!&\geq &\!\!\! \displaystyle R^{-N} \int_{K_{R}(x_0)} v^{\gamma+1} \ dx + \displaystyle c R^{2-N}\int_{K_{R/2}(x_0)}v^{\gamma-1}\left(|D^2u|^2 + |\nabla v|^2\right) \ dx \nonumber\\
 && \displaystyle -  \frac{3}{2}|\rho|_{q,K_R(x_0)}^2 \int_0^R  s^{1-\beta}  \left(\omega_N^{\frac{1}{q}} + \frac{3\ \omega_N^{-\frac{1}{q}} |\rho|_{q,K_R(x_0)}^2}{2q(2-\beta)}\ s^{2-\beta} +\frac{|\rho|_{q,K_R(x_0)}}{2q(1-\frac{\beta}{2})}\ s^{1-\frac{\beta}{2}} \right)^{q-2} ds\nonumber \\
 &&\displaystyle-  \frac{1}{2}|\rho|_{q,K_R(x_0)} \int_0^R  s^{-\frac{\beta}{2}}  \left(\omega_N^{\frac{1}{q}} + \frac{3\ \omega_N^{-\frac{1}{q}} |\rho|_{q,K_R(x_0)}^2}{2q(2-\beta)}\ s^{2-\beta} +\frac{|\rho|_{q,K_R(x_0)}}{2q(1-\frac{\beta}{2})}\ s^{1-\frac{\beta}{2}} \right)^{q-1} ds.
\end{eqnarray}
\end{theorem}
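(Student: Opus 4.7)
The plan is to combine a Bochner/Simons-type differential identity for $v$ on the spacelike graph $M=\operatorname{graph}(u)\subset \LN$ with a monotonicity-type formula on the Lorentz ball $K_R(x_0)$. The induced Riemannian metric $g_{ij}=\delta_{ij}-u_iu_j$ is positive definite with $\det g=v^2$, and differentiating \eqref{eq:BI2} together with the pointwise identity $v\,\nabla v=-D^2u\cdot\nabla u$ should yield a Euclidean divergence inequality of the schematic form
\[
\operatorname{div}\!\bigl(v^{\gamma-1}\nabla v\bigr)\;\ge\;c\,v^{\gamma-1}\bigl(|D^2u|^2+|\nabla v|^2\bigr)\;-\;\mathcal R(x;\rho,\nabla\rho),
\]
valid for $\gamma\in(0,1/N)$. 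The upper bound $1/N$ emerges from an algebraic Cauchy--Schwarz constraint when absorbing the cross term $v^{\gamma-2}(\nabla u\cdot\nabla v)^2$ into $v^{\gamma-1}|D^2u|^2$; the weight $v^{\gamma+1}$ that eventually appears on the RHS of \eqref{eqTesiProp2} reflects the fact that $v\,dx$ is the Riemannian volume element of $M$, and the term $v^{\gamma-1}(|D^2u|^2+|\nabla v|^2)$ on the RHS is an immediate consequence of this identity once one recognizes that $|\nabla v|^2$ and $v^{-2}|D^2u\cdot\nabla u|^2$ coincide.

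Next I would run a monotonicity argument on the Lorentz ball. Introducing $r(x)=[|x-x_0|^2-(u(x)-u(x_0))^2]^{1/2}$, so that $K_s(x_0)=\{r<s\}$, the spacelike assumption together with the standing hypothesis that $K_R(x_0)$ is bounded provide the geometric controls on $|\nabla r|$ and $|K_s|$ that are needed below. Multiplying the divergence inequality above by a truncation of the Euclidean cutoff $R^{-N}-r^{-N}$ and integrating, the coarea formula produces the mean-value term $R^{-N}\!\int_{K_R}v^{\gamma+1}\,dx$ on the boundary side, while the limiting value $\omega_N v^{\gamma}(x_0)$ arises when the inner radius tends to $0^+$ (using continuity of $v$ at $x_0$). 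This gives the skeleton inequality
\[
\omega_N v^{\gamma}(x_0)\;\ge\; R^{-N}\!\!\int_{K_R(x_0)}\!\!v^{\gamma+1}\,dx\,+\,cR^{2-N}\!\!\int_{K_{R/2}(x_0)}\!\!v^{\gamma-1}\bigl(|D^2u|^2+|\nabla v|^2\bigr)dx\,-\,\mathcal E(R),
\]
and reduces the problem to identifying $\mathcal E(R)$ with the last two summands on the RHS of \eqref{eqTesiProp2}.

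The expansion of $\mathcal R$ naturally splits into a $|\rho|^2$ piece and a $\rho\,\nabla\rho$ piece; after one integration by parts in the radial variable $r$ each becomes a one-dimensional integral of the form $\int_0^R s^{1-\beta}(\cdots)^{q-2}\,ds$ or $\int_0^R s^{-\beta/2}(\cdots)^{q-1}\,ds$, with $\beta=2N/q$ produced by H\"older's inequality applied to $\|\rho\mathbf{1}_{K_s}\|_1$. The trinomial inside the parentheses in \eqref{eqTesiProp2} is the output of a Gronwall-type bootstrap on $\sup_{K_s}|\nabla u|$: integrating \eqref{eq:BI2} along radial paths emanating from $x_0$ and iterating once yields exactly the three-term bound $\omega_N^{1/q}+\tfrac{3\,\omega_N^{-1/q}|\rho|_{q,K_R(x_0)}^2}{2q(2-\beta)}\,s^{2-\beta}+\tfrac{|\rho|_{q,K_R(x_0)}}{2q(1-\beta/2)}\,s^{1-\beta/2}$, which is then raised to the conjugate H\"older power $q-1$ or $q-2$ depending on which error piece is being estimated. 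The hypothesis $q>N$ is sharp, as it is precisely what makes $s^{1-\beta}$ and $s^{-\beta/2}$ integrable near $s=0$. The main obstacle is exactly this bookkeeping: matching the explicit constants $3/[2q(2-\beta)]$ and $1/[2q(1-\beta/2)]$ and maintaining the distinction between the $(q-1)$ and $(q-2)$ powers in the two error integrals requires propagating the precise H\"older exponents and Bochner constants through the bootstrap, since a schematic estimate would yield only a qualitative inequality and not the sharp form in \eqref{eqTesiProp2}.
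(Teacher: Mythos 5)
Your broad strategy (a pointwise differential inequality for a power of $v$, then a monotonicity argument over the Lorentz ball, then a Gronwall-type step) matches the paper's skeleton, but the two load-bearing ingredients are misidentified, and as stated your argument would not close.

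First, the differential inequality is not Euclidean. You propose $\operatorname{div}(v^{\gamma-1}\nabla v)\ge cv^{\gamma-1}(|D^2u|^2+|\nabla v|^2)-\mathcal R$. The paper instead computes the Laplace--Beltrami operator $\Delta_M v^\gamma$ on the graph $M$ (see \eqref{eq:monotform2}--\eqref{eq:monotform3}) and obtains
\[
\Delta_M v^\gamma \le -C\,v^{\gamma-2}\Bigl[\textstyle\sum_{i,j}u_{ij}^2+\sum_j\bigl(\sum_i\nu_iu_{ij}\bigr)^2\Bigr]+\tfrac14 v^\gamma\rho^2+\gamma\,\delta_{N+1}(v^{\gamma+1}\rho).
\]
This distinction is not cosmetic: the monotonicity formula \eqref{eq:monotform} is a Green/coarea identity for $\Delta_M$ on $(M,g)$, and it produces the geometric quantities $(X,\nu)_{\LN}$, the boundary terms on $\partial L_s$, and the $l^{-N-2}$ potential, none of which appear if you integrate a Euclidean divergence against a Euclidean cutoff. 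Without this structure, the mean-value term $\omega_N v^\gamma(x_0)$ and the sign of the second fundamental form contribution cannot be obtained. Moreover, the error term is not controlled merely by $|\rho|$ and $|\nabla\rho|$; the derivative of $\rho$ is absorbed from the start by integrating by parts the $\delta_{N+1}(v^{\gamma+1}\rho)$ term against $(s^2-l^2)$ via \eqref{eq:integparts}, so only $\rho$ (not $\nabla\rho$) survives.

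Second, the trinomial and the exponents $q-1$, $q-2$ do not come from ``integrating \eqref{eq:BI2} along radial paths and bootstrapping $\sup_{K_s}|\nabla u|$.'' The paper sets $\psi(s):=s^{-N}\int_{L_s}v^\gamma\,dA$, derives the integral inequality
\[
\psi(t)\le \omega_N+\tfrac32|\rho|_{q,K_R}^2\!\int_0^t s^{1-\beta}\psi^{\frac{q-2}{q}}\,ds+\tfrac12|\rho|_{q,K_R}\!\int_0^t s^{-\beta/2}\psi^{\frac{q-1}{q}}\,ds,
\]
and then applies a new generalized Gronwall lemma (Lemma~\ref{lem:genGronwall}) handling two sublinear powers with two distinct singular kernels simultaneously. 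That lemma is precisely what produces the explicit trinomial raised to the power $q$, and hence the $(q-2)$- and $(q-1)$-powered integrands. Your ODE-along-rays picture does not yield these constants and has no mechanism to maintain the two separate kernels $s^{1-\beta}$ and $s^{-\beta/2}$. You should locate the key lemma, run the Hölder estimates that feed the two kernels (they come from $v^\gamma\rho^2$ and $v^\gamma|\rho|$ after using $s^2-l^2\le 2s^2$ and $l\le s$ inside $L_s$), and only then pass from $L_s$ back to $K_{R}$ using $dA=v\,dx$ and $\nu_i=u_i/v$, which is also where the identity $v^{-2}\sum_j(\sum_i u_{ij}u_i)^2=|\nabla v|^2$ (your correct observation) is used.
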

The gradient estimate in Theorem \ref{teo:mainestimate} is sharp for two reasons. First, in contrast to what we proved in \cite{BIA}, we are able to reach the sharp regularity threshold $q>N$ and we do not assume any global integrability assumption on $\rho$. In particular Theorem \ref{teo:mainestimate} can be applied to entire constant or asymptotically constant mean curvature hypersurfaces, provided that $K_R(x_0)$ is a bounded subset of $\RN$. This assumption plays a crucial role in the proof of Theorem \ref{teo:mainestimate} as we consider integral quantities over the Lorentz ball involving $v^\gamma$ and $\rho$ (see \eqref{monotform}). However, when $u\in L^\infty(\RN)$ then $K_R(x_0)$ is automatically bounded, for any choice of $x_0$, $R$, as shown in Lemma \ref{lem:boundedLorentzballX}-(i). The same conclusion holds true even when $u$ is not globally bounded and possibly asymptotic to the light-cone at infinity, i.e. such that $|\nabla u(x)|\to 1$ as $|x|\to +\infty$, provided that $R$ is sufficiently small, see Lemma \ref{lem:boundedLorentzballX}-(ii). We also stress that the hypothesis $u \in C^3(\RN)$ can be relaxed, but for our purposes (see the proof of Theorem \ref{teo:w2qregmin}) it does not change the analysis since we use it for smooth approximating solutions as in \cite{BS, BIA}.

Next, when considering  the solution of \eqref{eq:BI2} vanishing at infinity, i.e. the unique weak solution of the Born-Infeld equation 
\begin{equation}\label{eq:BI}
\tag{$\mathcal{BI}$}
-\operatorname{div}\left(\displaystyle\frac{\nabla u}{\sqrt{1-|\nabla u|^2}}\right)= \rho \quad \hbox{in }\mathbb{R}^N,
\quad \displaystyle\lim_{|x|\to \infty}u(x)= 0, 
\end{equation} 
we deduce the following result. Before stating the proposition, we first recall the definition of weak solution to \eqref{eq:BI}. 
\begin{definition}\label{def:weaksol} We say that $u\in \X$ is  a {\em weak solution} of \eqref{eq:BI} if for all $\psi \in\X$ we have
\begin{equation}\label{eq:weakBI}
\irn \frac{\n u \cdot \n \psi}{\sqrt{1-|\nabla u|^2}}\, dx
=\langle \rho, \psi\rangle. 
\end{equation}
 \end{definition}

\begin{proposition}\label{prop:mainestimate0-intro}
Assume $N\geq 3$, $q>N$ and let $u \in \X\cap C^3(\RN)$ be a strictly spacelike weak solution of \eqref{eq:BI} with $\rho \in L^m(\RN) \cap C^1(\RN)$, $m\in(1,2_*]$ (the case $m=1$ requires a separate statement, see Proposition \ref{prop:mainestimate20}). There exist two positive constants $\gamma \in (0,\frac{1}{N})$ depending only on $N$, $c$  depending only on $N$ and $m$, such that for any $R>0$, $x_0\in \RN$
  \begin{equation}\label{eq:introgradestim}
\displaystyle (1-|\nabla u|^2)^{\frac{\gamma}{2}}(x_0)\geq  \displaystyle \frac{\omega_N^{\gamma+1}}{\left(\omega_N+ c R^{-N}|\rho|_m^{\frac{Nm}{N-m}}\right)^{\gamma+1}} - P\left(|\rho|_{q,K_R(x_0)}R^{\frac{q-N}{q}}\right),
\end{equation}
where $P:[0,+\infty) \to [0,+\infty)$ is a continuous function such that $P(0)=0$, see \eqref{eq:defPk} for the precise definition.
\end{proposition}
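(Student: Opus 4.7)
The plan is to apply Theorem~\ref{teo:mainestimate} at $(x_0,R)$, discard the non-negative Hessian contribution in \eqref{eqTesiProp2}, and combine the resulting inequality
\[
\omega_N v^\gamma(x_0) \ge R^{-N}\int_{K_R(x_0)} v^{\gamma+1}\,dx \;-\; \omega_N\, P\!\left(|\rho|_{q,K_R(x_0)}R^{(q-N)/q}\right)
\]
with a lower bound on the leading integral of the form $\omega_N^{\gamma+2}/(\omega_N+cR^{-N}|\rho|_m^{Nm/(N-m)})^{\gamma+1}$; the two negative summands of \eqref{eqTesiProp2} are packaged into the single term $P$.

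To bound the integral from below I proceed in three steps. First, the spacelike inequality $|u(x)-u(x_0)|\le |x-x_0|$ gives the inclusion $B_R(x_0)\subset K_R(x_0)$ and hence $|K_R(x_0)|\ge \omega_N R^N$. Second, Jensen's inequality for the convex map $t\mapsto t^{\gamma+1}$ on the bounded set $K_R(x_0)$ yields
\[
\int_{K_R(x_0)} v^{\gamma+1}\,dx \;\ge\; |K_R(x_0)|^{-\gamma}\left(\int_{K_R(x_0)} v\,dx\right)^{\gamma+1},
\]
and the pointwise inequality $\sqrt{1-t}\ge 1-t$ on $[0,1]$ applied to $t=|\nabla u|^2$ gives $\int_{K_R(x_0)} v\,dx \ge |K_R(x_0)|-|\nabla u|_2^2$. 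Third, I establish the key global a priori bound $|\nabla u|_2^2 \le C(N,m)\,|\rho|_m^{Nm/(N-m)}$.

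For this auxiliary bound, I test the weak formulation of \eqref{eq:BI} with $u\in\X$ itself, obtaining $|\nabla u|_2^2 \le \int |\nabla u|^2/v\,dx = \int \rho u\,dx \le |\rho|_m|u|_{m'}$. Since $m\in(1,2_*]$ one has $m'\ge 2^*$, and the associated Sobolev exponent $q:=Nm'/(N+m')$ satisfies $q\in[2,N)$, so $|u|_{m'}\le S\,|\nabla u|_q$. Interpolating $L^q$ between $L^2$ and $L^\infty$ and using $|\nabla u|_\infty\le 1$ gives $|\nabla u|_q\le |\nabla u|_2^{2/q}$, whence $|\nabla u|_2^{2-2/q}\le C|\rho|_m$; a direct computation shows $q/(q-1)=Nm/(N-m)$, yielding the claimed bound.

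Putting the three steps together gives
\[
R^{-N}\int_{K_R(x_0)} v^{\gamma+1}\,dx \;\ge\; R^{-N}|K_R(x_0)|\left(1-\frac{C|\rho|_m^{Nm/(N-m)}}{|K_R(x_0)|}\right)^{\gamma+1}.
\]
The key algebraic identity $(1-x)(1+2x)\ge 1$ for $x\in[0,1/2]$ then upgrades $(1-x)^{\gamma+1}$ to $1/(1+2x)^{\gamma+1}$, and exploiting the monotonicity of $a\mapsto a^{\gamma+2}/(a+\mathrm{const})^{\gamma+1}$ together with $|K_R(x_0)|\ge \omega_N R^N$ produces the target bound with $c=2C$. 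The main obstacle will be handling the supercritical regime where $CR^{-N}|\rho|_m^{Nm/(N-m)}>\omega_N/2$ and the elementary inequality $(1-x)(1+2x)\ge 1$ is no longer available: here the reciprocal target is at most an absolute constant while, for $R$ small with $\rho\in C^1\cap L^q$ and $q>N$, the quantity $|\rho|_{q,K_R(x_0)}R^{(q-N)/q}$ dominates the target at the appropriate rate, making the claim trivially true from $v^\gamma(x_0)\ge 0$ and allowing $c$ to be chosen uniformly in terms of $N$ and $m$ only.
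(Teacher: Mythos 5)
Your overall skeleton matches the paper's: apply Theorem~\ref{teo:mainestimate}, discard the non-negative Hessian integral, use $B_R(x_0)\subset K_R(x_0)$, and prove an a priori bound on $|\nabla u|_2$ via the weak formulation and Sobolev embedding (your computation $q/(q-1)=Nm/(N-m)$ is correct and mirrors \eqref{eq3quatermainteo}). However, your treatment of the lower bound on $R^{-N}\int_{K_R(x_0)} v^{\gamma+1}\,dx$ has a genuine gap.

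The issue is the chain
\[
\int_{K_R} v^{\gamma+1}\,dx \;\ge\; |K_R|^{-\gamma}\Bigl(\int_{K_R} v\,dx\Bigr)^{\gamma+1}
\quad\text{with}\quad
\int_{K_R} v\,dx \;\ge\; |K_R|-|\nabla u|_2^2.
\]
The second bound is vacuous whenever $|\nabla u|_2^2\ge|K_R(x_0)|$, which happens for every $x_0$ once $R$ is small enough (since $|K_R|\to 0$ as $R\to 0$ while $|\nabla u|_2$ is fixed), not merely in some exceptional regime. In that range the expression $\bigl(1-C|\rho|_m^{Nm/(N-m)}/|K_R|\bigr)^{\gamma+1}$ is undefined or negative, and the elementary inequality $(1-x)(1+2x)\ge 1$ (valid only on $[0,1/2]$) does not apply. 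Your proposed rescue---that $v^\gamma(x_0)\ge 0$ together with $P$ dominating makes the claim ``trivially true''---is incorrect: take $\rho$ vanishing in a neighbourhood of $x_0$ but with $|\rho|_m$ large; then for small $R$ one has $|\rho|_{q,K_R(x_0)}=0$ so $P\equiv 0$, yet the first term on the right of \eqref{eq:introgradestim} is strictly positive, and one still needs a nontrivial lower bound on $v^\gamma(x_0)$.

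The paper avoids this by controlling a stronger quantity. From \eqref{eq:applweaksol} one gets not only $|\nabla u|_2^2\le C|\rho|_m^{Nm/(N-m)}$ but, since $v\le1$, the bound
\[
\int_{B_R(x_0)} \frac{1}{v}\,dx=\int_{B_R(x_0)} v\,dx+\int_{B_R(x_0)}\frac{|\nabla u|^2}{v}\,dx\le \omega_N R^N+c_1|\rho|_m^{Nm/(N-m)},
\]
and then a reverse-H\"older argument writing $1=v^{\frac{\gamma+1}{\gamma+2}}\cdot v^{-\frac{\gamma+1}{\gamma+2}}$ yields
\[
\omega_N R^N\le\Bigl(\int_{B_R(x_0)} v^{\gamma+1}\,dx\Bigr)^{\frac1{\gamma+2}}\Bigl(\int_{B_R(x_0)} v^{-1}\,dx\Bigr)^{\frac{\gamma+1}{\gamma+2}},
\]
which gives \eqref{eq3bismainteo} for \emph{all} $R>0$, $x_0\in\RN$, with no supercritical regime. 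Jensen applied to $v\ge 1-|\nabla u|^2$ throws away the information contained in the singular weight $1/v$, and that information is precisely what is needed when $R$ is small. Replace your Jensen step with this $L^1$-estimate on $v^{-1}$ plus H\"older, and the rest of your argument goes through.
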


When $\rho=0$, and since $P(0)=0$, we infer that
$$
 \inf_{\RN}(1-|\nabla u|^2)^{\frac{\gamma}{2}} \geq  1,
$$
which shows that $\sup_{\RN}|\nabla u|=0$, i.e. $u=0$ due to the condition at infinity.\\ 

The proof of Theorem \ref{teo:mainestimate} takes its roots from the methods introduced by Bartnik and Simon to deal with bounded data (see \cite[Lemma 2.1]{BS}). The reason of the optimality of \eqref{eqTesiProp2} is that, on the contrary to what is proved in \cite{BS,BIA}, and thanks to a finer decomposition of integral quantities, we derive a monotonicity formula, namely \eqref{eq5monform}, which involves only nonlinear powers of the auxiliary function
$$\psi(s):=
s^{-N} \int_{K_{s}(x_0)} v^{\gamma+1} \ dx.$$
In particular, instead of using an integrating factor to rule out the linear term in $\psi$, as done in  \cite{BS,BIA}, we prove a specific variant of Gronwall's Lemma involving two separate powers of $\psi$ and with two different singular weights (see Lemma \ref{lem:genGronwall}), so that, at the end, there are no extra factors in front of the first integral in \eqref{eqTesiProp2}.
The quantity $v$ appearing in Theorem \ref{teo:mainestimate} has a significant geometrical meaning as it represents the area element of $M=graph(u)$ (see Sect. 2), while the terms $|D^2u|^2$, $|\nabla v|^2$ are related to the second fundamental form of $M$ (see \eqref{normSecForm}, \eqref{eq:NormSecForm2}).\\

Finally, with \eqref{eqTesiProp2} at hand, we are able to improve the results of \cite{BIA, Haa} concerning the regularity of the minimizer of the energy associated with \eqref{eq:BI}, i.e. the Born-Infeld electrostatic energy. We briefly summarize the known facts about the variational formulation of \eqref{eq:BI}. For a given $\rho \in \X^*$, equation \eqref{eq:BI} is, at least formally, the Euler-Lagrange equation associated to the functional $I_\rho:\X\to \R$ defined by
\beq\label{eq:BIenergygeneral}
I_\rho(u)=\irn \Big(1 - \sqrt{1-|\nabla u|^2}\Big) dx- \langle \rho, u\rangle.
\eeq
We recall that $\X^*$ contains Radon measures as for instance linear combinations of Dirac deltas or $L^m(\RN)$ functions, for any $m\in[1,2_*]$, see \cite{FOP,BDP,K} and Remark \ref{rem2}. 
The following properties hold true (see \cite[Lemma 2.1, Lemma 2.2]{BDP}):
\begin{itemize}
\item for any $u\in \X$ one has $\lim_{|x|\to\infty}u(x)=0$;
\item  $\X$ is weakly closed, it embeds continuously into $W^{1,s}(\RN)$ for all $s\in[2^*,\infty)$ and in $L^\infty(\RN)$;
\item $I_\rho$ is  bounded from below in $\X$, coercive, weakly lower semi-continuous and strictly convex.
\end{itemize}
In particular, from the direct methods in the Calculus of Variations the functional $I_\rho$ has a unique minimizer $u_\rho\in \X$ (see \cite[Proposition 2.3]{BDP}) and the set of singular points $$E_{\rho}=\{x\in \RN; \ |\nabla u_\rho|=1\}$$ is a null set with respect to the Lebesgue measure (see \cite[Proposition 2.7]{BDP}).
In general one has $E_\rho \neq \emptyset$, as it happens for instance when $\rho$ is a Dirac mass \cite[Theorem 1.6]{BDP} and \cite{BI} or 
$\rho$ is, in a ball $B_R(0)$, the toy radial datum $1/|x|^{\beta+1}$  with $\beta>0$ and small \cite[Section 1]{BIA}. Hence, as the functional $I_\rho$ is not $C^1$ at points $u\in \X$ such that $|\nabla u|_\infty=1$, we cannot say in general that the unique minimizer is a weak solution to \eqref{eq:BI}.\\

\noindent As pointed out in \cite[Section 2]{BDP}, if $\rho$ is a distribution, the weak formulation of \eqref{eq:weakBI} extends to any test function $\psi \in C^{\infty}_{c}(\RN)$. Moreover, any weak solution of \eqref{eq:BI} in $\X$ coincides with the minimizer of  $I_\rho$ \cite[Proposition 2.6]{BDP}, but, as pointed out before, we do not known if the converse holds true in general. This remains a challenging open question for generic data $\rho \in \X^*$, and positive answers have been given only for bounded or integrable distributions $\rho$, see \cite{BS,BDP,BIA,Haa}.

In view of the heuristic \eqref{eq1:intro}, we conjectured \cite[Conjecture 1.4]{BIA} that if $\rho \in \X^*$ and $\rho \in L^q_{loc}(\RN)$, with $q>N$, then the unique minimizer $u_\rho$ is $C^{1,\alpha}_{loc}(\RN)$, for some $\alpha \in (0,1)$. Under a smallness assumption on $\rho$, we proved $C^{1,\alpha}_{loc}$ regularity for $\rho \in L^q(\RN)$ with $q>2N$ only, see \cite[Theorem 1.6]{BIA}. With \eqref{eq:introgradestim} (and the subsequent Proposition \ref{prop:mainestimate1}) at hand, the argument in \cite[Theorem 1.6]{BIA} shows
there exists a positive constant $c=c(N,q,m)$ such that if $\rho\in L^q(\mathbb{R}^N)\cap L^{m}(\R^N)$ satisfies  
\begin{equation}\label{smallness}
|\rho|_{q}^{\frac{q}{q-N}}|\rho|_{m}^{\frac{m}{N-m}} \leq c, 
\end{equation}
with $q>N$ and $m \in (1,2_*]$, 
then the unique minimizer $u_\rho$ of \eqref{eq:BIenergygeneral} is a weak solution of \eqref{eq:BI}, it is strictly spacelike and $u_\rho \in C^{1,\alpha}_{\rm loc}(\R^N)$ for some $\alpha \in (0,1)$. Differently from  \cite[Theorem 1.6]{BIA} the smallness condition appearing in \eqref{smallness} is intrinsic because it is invariant under a natural transformation associated with \eqref{eq:BI} which preserves the modulus of the gradient (see Remark \ref{rem1}). Recently Haarala posted a preprint \cite{Haa} where he claims, see \cite[Theorem 1.3]{Haa}, $C^{1,\alpha}$ regularity without any smallness assumptions on $\rho$. The strategy of \cite{Haa} is based on a tricky gradient estimate, see \cite[Theorem 3.5]{Haa}, in the spirit of \cite[Theorem 3.5]{BS}, and a delicate fixed point method. With the combination of Theorem \ref{teo:mainestimate} and \cite[Theorem 3.5]{Haa}, it is in fact possible to prove more.

Indeed, by the same proof of \cite[Theorem 1.5]{BIA} and exploiting Theorem \ref{teo:mainestimate} one gets immediately that the minimizer $u_\rho$ has a second weak derivative locally, whenever $\rho \in L^q_{loc}(\RN)$, $q>N$, which is another serious issue. But this is not the end, in fact, notice that in \eqref{eqTesiProp2} there is a weight $v^{\gamma-1}$ in front of the terms $|D^2u|^2$ and $|\nabla v|^2$, and the exponent $\gamma-1$ is negative, because $\gamma \in (0,\frac{1}{N})$. This lead us to suspect that actually the minimizer is more than $W^{2,2}$ locally and this is our second main result.

\begin{theorem}\label{teo:w2qregmin}
Assume $N\geq 3$, $q>N$, $m\in[1,2_*]$, $\rho\in L^q(\mathbb{R}^N) \cap L^m(\RN)$ and let $u_\rho\in \X$ be the unique minimizer of \eqref{eq:BIenergygeneral}. Then $u_\rho \in W^{2,q}_{\rm loc}(\R^N)$, $u_\rho$ is strictly spacelike and it is a weak solution of \eqref{eq:BI}.
\end{theorem}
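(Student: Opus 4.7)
The plan is to combine Theorem \ref{teo:mainestimate} with Haarala's unconditional $C^{1,\alpha}_{\rm loc}$ regularity \cite[Theorem 1.3]{Haa} so as to first establish, for the minimizer $u_\rho$, strict spacelikeness on compact sets and $W^{2,2}_{\rm loc}$ regularity (in the spirit of \cite[Theorem 1.5]{BIA}), and then to bootstrap to $W^{2,q}_{\rm loc}$ by rewriting \eqref{eq:BI} in non-divergence form and invoking standard linear elliptic $L^p$-theory.

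First I would regularize: approximate $\rho$ by $\rho_n \in C^\infty_c(\RN)$ with $\rho_n \to \rho$ in $L^q(\RN) \cap L^m(\RN)$ and $|\rho_n|_m \le C|\rho|_m$, $|\rho_n|_q \le C|\rho|_q$. For each $n$ the energy $I_{\rho_n}$ admits a unique minimizer $u_n$; by the existence theory for smooth compactly supported data (following \cite{BS,BDP}) together with elliptic bootstrapping, one obtains $u_n \in \X\cap C^3(\RN)$ strictly spacelike. Uniqueness of minimizers and $\rho_n \to \rho$ in $\X^*$ yield $u_n \to u_\rho$ in $\X$. Now apply Proposition \ref{prop:mainestimate0-intro} to each $u_n$: fixing a compact $K \subset \RN$ and choosing $R = R(K) > 0$ small enough that the subtracted $P$-term in \eqref{eq:introgradestim} (controlled by $|\rho_n|_q$ and $R$) is less than half of the positive first term (controlled by $|\rho_n|_m$ and $R$), the estimate gives a uniform lower bound $v_n \ge c_K > 0$ on $K$, so that $u_\rho$ inherits strict spacelikeness on each compact subset. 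Then insert $u_n$ into Theorem \ref{teo:mainestimate} itself: since $v_n \le 1$ and $\gamma - 1 < 0$ imply $v_n^{\gamma-1} \ge 1$, the weighted term
\[
cR^{2-N}\int_{K_{R/2}(x_0)} v_n^{\gamma-1}\bigl(|D^2 u_n|^2 + |\nabla v_n|^2\bigr)\, dx
\]
dominates $\|D^2 u_n\|_{L^2(K_{R/2}(x_0))}^2$, which is therefore uniformly bounded by the right-hand side of \eqref{eqTesiProp2}. Extracting a weak-$W^{2,2}_{\rm loc}$ limit, one concludes $u_\rho \in W^{2,2}_{\rm loc}(\RN)$ and that $u_\rho$ is a weak solution of \eqref{eq:BI}.

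To upgrade from $W^{2,2}$ to $W^{2,q}$, I would exploit the a.e.\ identity, valid because $u_\rho \in W^{2,2}_{\rm loc}$ and $v = \sqrt{1-|\nabla u_\rho|^2} > 0$ locally,
\[
\sum_{i,j=1}^N a_{ij}(x)\, \partial_{ij} u_\rho(x) = -\rho(x)\sqrt{1 - |\nabla u_\rho(x)|^2}, \qquad a_{ij}(x) := \delta_{ij} + \frac{\partial_i u_\rho(x)\, \partial_j u_\rho(x)}{1 - |\nabla u_\rho(x)|^2}.
\]
On any compact $K$, the matrix $(a_{ij}(x))$ has eigenvalues in $[1, c_K^{-2}]$, so it is uniformly elliptic; and by \cite[Theorem 1.3]{Haa}, $\nabla u_\rho$ is H\"older continuous, so $(a_{ij})$ has H\"older continuous entries on $K$. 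The right-hand side lies in $L^q_{\rm loc}(\RN)$ since $\rho \in L^q$ and $|\nabla u_\rho| \le 1$. Standard $L^p$-Calder\'on-Zygmund regularity for linear elliptic equations in non-divergence form with continuous coefficients then delivers $u_\rho \in W^{2,q}_{\rm loc}(\RN)$.

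The most delicate step is the uniform local lower bound $v_n \ge c_K$: it rests on the right balance between the two competing terms in \eqref{eq:introgradestim} via the careful choice of $R=R(K)$, which is possible precisely because $P(0)=0$ and $P$ is locally small in its argument. Once this bound is available and \cite[Theorem 1.3]{Haa} supplies the H\"older continuity of $\nabla u_\rho$, the non-divergence form argument is routine; the novelty lies in the fact that the weight $v^{\gamma-1}$ with $\gamma \in (0,1/N)$ appearing in Theorem \ref{teo:mainestimate} makes the $L^2$-estimate of $D^2u_n$ automatic, while strict spacelikeness on compacts is what allows the linear theory to kick in at the sharp threshold $q > N$.
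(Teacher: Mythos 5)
There is a genuine gap in your proposed first step. You claim that, on a fixed compact set $K$, one can choose $R=R(K)$ small enough so that the subtracted $P$-term in \eqref{eq:introgradestim} is dominated by the positive first term, giving a uniform lower bound $v_n\geq c_K>0$ on $K$. This cannot work: as $R\to 0^+$ the positive term behaves like
$\omega_N^{\gamma+1}\big(\omega_N+cR^{-N}|\rho|_m^{Nm/(N-m)}\big)^{-(\gamma+1)}\sim C\,R^{N(\gamma+1)}$, while the $P$-term (whose leading behaviour near $0$ is linear) behaves like $C'\,|\rho|_{q,K_R(x_0)}\,R^{(q-N)/q}$, and since $N(\gamma+1)>1>(q-N)/q$ the positive term vanishes \emph{faster}. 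Taking $R$ large is no better. So shrinking $R$ can never make the negative term subordinate on an arbitrary compact $K$ unless $|\rho_n|_{q,K_R(x_0)}$ is itself small there. This is precisely the mechanism behind the smallness assumption \eqref{smallness} of \cite[Theorem 1.6]{BIA}, and your proposal would only reproduce that conditional result, not the unconditional one.

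The paper sidesteps this by applying the gradient estimate only where the $P$-term vanishes identically: for $x_0$ at distance $\geq R'$ from $\supp(\rho_n)$ one has $K_{\bar R}^n(x_0)\cap\supp(\rho_n)=\emptyset$, so $|\rho_n|_{q,K_{\bar R}^n(x_0)}=0$ and $P(0)=0$, giving a uniform bound $|\nabla u_n|\leq\delta<1$ \emph{outside} a neighborhood $\Lambda_n$ of $\supp(\rho_n)$ (Step 2). This ``control at infinity'' is what makes $(\nu_n-\nu_0)_+$ compactly supported, which is exactly the hypothesis needed to run Haarala's $L^q$ estimate (Proposition \ref{prop:Lqestnu}) and then the Moser iteration in Theorem \ref{teo:maingradestimate2} to upgrade to a \emph{global} uniform bound $|\nabla u_n|_\infty\leq\theta<1$ (Step 3). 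Without that two-step scheme the strict spacelikeness, and hence the uniform ellipticity needed for the non-divergence bootstrap, is not available. A minor further point: you invoke Haarala's \cite[Theorem 1.3]{Haa} as a black box for local $C^{1,\alpha}$; the paper instead derives the uniform $C^{1,\alpha}$ bound independently (Step 4) from the Kuusi--Mingione potential estimates after truncating the operator, once the global gradient bound is in hand. Your non-divergence form bootstrap (Step 5 in the paper) and the observation that $v_n^{\gamma-1}\geq1$ in \eqref{eqTesiProp2} gives an automatic $W^{2,2}_{\rm loc}$ bound are both correct and match the paper; it is the route to the uniform global gradient bound that must be repaired.
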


We then deduce the answer to \cite[Conjecture 1.4]{BIA} when $\rho\in L^q(\mathbb{R}^N)$, with $q>N$, with the  precise value of the H\"older exponent $\alpha=1-\frac{N}{q}$. The case $\rho\in L^q_{\rm loc}(\mathbb{R}^N)$ remains open.
\begin{corollary}\label{cor:teo:w2qregmin}
Assume $N\geq 3$, $q>N$, $m\in[1,2_*]$, $\rho\in L^q(\mathbb{R}^N) \cap L^m(\RN)$. Then the unique minimizer $u_\rho$ of \eqref{eq:BIenergygeneral} belongs to $C^{1,\alpha}_{loc}(\RN)$, with $\alpha=1-\frac{N}{q}$.
\end{corollary}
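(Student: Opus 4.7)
The plan is to deduce the corollary directly from Theorem \ref{teo:w2qregmin} via Morrey's embedding. Under the assumptions on $\rho$, Theorem \ref{teo:w2qregmin} asserts that the unique minimizer $u_\rho$ belongs to $W^{2,q}_{\rm loc}(\RN)$. In particular, for every bounded open set $\Omega \Subset \RN$ with Lipschitz boundary, we have $\nabla u_\rho \in W^{1,q}(\Omega;\RN)$.

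Since $q > N$, the classical Morrey embedding gives $W^{1,q}(\Omega) \hookrightarrow C^{0,\alpha}(\overline{\Omega})$ continuously, with exact H\"older exponent $\alpha = 1 - \frac{N}{q}$. Applied componentwise to $\nabla u_\rho$, this yields $\nabla u_\rho \in C^{0,\alpha}(\overline{\Omega};\RN)$, and therefore $u_\rho \in C^{1,\alpha}(\overline{\Omega})$. As $\Omega$ was arbitrary, we conclude $u_\rho \in C^{1,\alpha}_{\rm loc}(\RN)$ with $\alpha = 1 - \frac{N}{q}$, which is the claim.

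There is essentially no obstacle here beyond invoking Theorem \ref{teo:w2qregmin}: the $W^{2,q}_{\rm loc}$ regularity contains exactly the information needed, and the sharp Morrey exponent $1 - N/q$ is the one that appears. One could alternatively bypass the embedding theorem and argue by hand via the standard integral representation
\[
|\nabla u_\rho(x) - \nabla u_\rho(y)| \le c \, |x-y|^{1-\frac{N}{q}} \, \|D^2 u_\rho\|_{L^q(B)},
\]
valid for $x,y$ in a ball $B$ on which $\nabla u_\rho \in W^{1,q}$, which gives the same exponent. Either way, the proof of the corollary reduces to an immediate application of the theorem just proved.
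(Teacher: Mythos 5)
Your proposal is correct and coincides with what the paper intends: the corollary is stated without proof precisely because it is the immediate Morrey–Sobolev embedding $W^{2,q}_{\rm loc}\hookrightarrow C^{1,\alpha}_{\rm loc}$ with $\alpha=1-N/q$, applied to the $W^{2,q}_{\rm loc}$ regularity just established in Theorem~\ref{teo:w2qregmin}.
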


Our strategy is to derive a uniform control of the gradient at infinity from 
Proposition \ref{prop:mainestimate0-intro} for a sequence of smooth solutions corresponding to a smoothing of the datum $\rho$. In this way we somehow recover the boundary barrier construction performed by Bartnik and Simon in \cite{BS}, which was crucially based on the boundedness of the mean curvature. Then, using Haarala's gradient estimate, namely Theorem \ref{teo:maingradestimate2}, we capitalize on the control at infinity to deduce a uniform global gradient bound. We conclude by using standard elliptic regularity theory for equations in non-divergence form.

The outline of the paper is the following. In Section 2 we recall the basic facts about the geometry of spacelike hypersurfaces in the Lorentz-Minkowski space. Section 3 contains the proof of Theorem \ref{teo:mainestimate}. 
In section 4 we detail the proof of the gradient estimate due to Haarala \cite[Theorem 3.5]{Haa}. In Section 5 we prove our new gradient estimates for the Born-Infeld equation, namely Proposition \ref{prop:mainestimate0-intro} and a similar statement when $m=1$. Finally Section 6 is dedicated to the proof of Theorem \ref{teo:w2qregmin}.

\section{Preliminaries on the differential geometry of hypersurfaces in $\L^{N+1}$}

In this section we collect, for the reader's convenience, some definitions and results about the geometry of spacelike hypersurfaces in the Lorentz-Minkowski space that will be used throughout the paper. 
These facts are essentially contained in \cite{BS, BDP, BIA, BIA2, Lopez2014}.\\

We denote by $\L^{N+1}$ the $(N+1)$-dimensional Lorentz-Minkowski space, which is defined as the vector space $\R^{N+1}$ equipped with the symmetric bilinear form $(\cdot,\cdot)_{\L^{N+1}}$ defined by \eqref{def:Lorentflatzmetric}.
The bilinear form $(\cdot,\cdot)_{\L^{N+1}}$ is non-degenerate and it has index one (see e.g. \cite{Spivak}). 
The modulus of $\mathbf{x} \in \L^{N+1}$ is given by $\|\mathbf{x}\|_{\L^{N+1}}:=|(\mathbf{x},\mathbf{x})|_{\L^{N+1}}^{1/2}$.
We say that a vector $\mathbf{x} \in \L^{N+1}$ is 
\begin{itemize}
\item spacelike if $(\mathbf{x},\mathbf{x})_{\L^{N+1}}>0$ or $\mathbf{x}=0$;
\item timelike if $(\mathbf{x},\mathbf{x})_{\L^{N+1}}<0$;
\item lightlike if $(\mathbf{x},\mathbf{x})_{\L^{N+1}}=0$ and $\mathbf{x}\neq 0$.
\end{itemize}
If $V$ is a vector subspace of $\L^{N+1}$ we define the induced metric $(\cdot,\cdot)_V$ in the standard way
$$(\mathbf{x},\mathbf{y})_V:=(\mathbf{x},\mathbf{y})_{\L^{N+1}}, \ \ \mathbf{x},\mathbf{y} \in V.$$

\begin{definition}
We say that a hypersurface $M\subset \L^{N+1}$ is spacelike (resp. timelike, lightlike) if for any $\mathbf{p} \in M$ the induced metric $(\cdot,\cdot)_{T_{\mathbf{p}}M}$ on the tangent space $T_{\mathbf{p}}M$ is positive definite (resp. $(\cdot,\cdot)_{T_{\mathbf{p}}M}$ has index one, $(\cdot,\cdot)_{T_{\mathbf{p}}M}$ is degenerate). We say that $M$ is a non-degenerate hypersurface if $M$ is spacelike or timelike.
\end{definition}

Notice that if $M$ is a spacelike hypersurface then it inherits a Riemannian structure in a natural way. Moreover observe that if $M$ is a spacelike (resp. timelike) hypersurface and ${\mathbf{p}}\in M$ then we can decompose the space as $\L^{N+1}=T_{\mathbf{p}} M \oplus (T_{\mathbf{p}} M)^\perp$, where $(T_{\mathbf{p}} M)^\perp$ is a timelike (resp. spacelike) subspace of dimension 1 (see \cite{Lopez2014}).  
\begin{definition}
Let $M$ be a non-degenerate hypersurface. A Gauss map is a differentiable map $\bm{\nu}:M \to \L^{N+1}$ such that $\|\bm{\nu}({\mathbf{p}})\|_{\L^{N+1}}=1$ and $\bm{\nu}({\mathbf{p}}) \in (T_{\mathbf{p}} M)^\perp$ for all ${\mathbf{p}} \in M$.
\end{definition}

For the sake of completeness we recall that closed hypersurfaces (i.e. compact hypersurfaces without boundary) do not play a relevant role in the geometry of $\LN$. Indeed the following result holds (see \cite[Proposition 3.1]{Lopez2014} or \cite[Proposition 2.5]{BIA2})
\begin{proposition}\label{prop:closed}
Let $M \subset \L^{N+1}$ be a compact spacelike, timelike or lightlike hypersurface. Then $\partial M \neq \emptyset$.
\end{proposition}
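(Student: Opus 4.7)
The plan is to argue by contradiction, assuming $\partial M = \emptyset$, and to split into two cases according to the causal character of $M$. The spacelike case is the more subtle one; the other two fall to a direct maximum-principle argument on a linear height function.

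\textbf{Timelike and lightlike cases.} I would restrict the linear height function $f(x) := x_{N+1}$ to $M$. Since $M$ is compact with empty boundary, $f$ attains a maximum at some interior point $p \in M$, where $df_p = 0$. This forces the vector $e_{N+1}$ to be Euclidean-orthogonal to $T_p M$, hence $T_p M \subset e_{N+1}^{\perp}$ (Euclidean complement) $= \RN \times \{0\}$, and a dimension count gives equality $T_p M = \RN \times \{0\}$. But the restriction of $(\cdot,\cdot)_{\LN}$ to $\RN \times \{0\}$ is the standard Euclidean inner product, which is positive definite. So $T_pM$ is spacelike at $p$, contradicting the hypothesis that $(\cdot,\cdot)_{T_pM}$ has index one (timelike case) or is degenerate (lightlike case).

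\textbf{Spacelike case.} Here the previous argument cannot produce a contradiction, since ``$T_pM = \RN\times\{0\}$ is spacelike'' is perfectly consistent with $M$ being spacelike. Instead I would use the projection $\pi: \LN \to \RN$ onto the first $N$ coordinates. Any vector in the kernel of $d\pi_p$ lying in $T_pM$ must be of the form $c\, e_{N+1}$, and $(c\,e_{N+1},\, c\,e_{N+1})_{\LN} = -c^2 \le 0$. Positive definiteness of the induced metric on $T_pM$ then forces $c = 0$. Hence $\pi|_M$ has injective differential at every point and, being a map between manifolds of equal dimension $N$, is a local diffeomorphism. Picking any connected component $M_0$ of the compact $M$, the image $\pi(M_0) \subset \RN$ is simultaneously non-empty, compact (hence closed) and open; by connectedness of $\RN$ it would equal the whole of $\RN$, which is not compact. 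This contradiction completes the proof.

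\textbf{Main obstacle.} The main subtlety I anticipate is exactly the spacelike case: the height-function trick does not conclude, and one has to replace it by the global topological obstruction coming from the projection $\pi$. The verification that $\pi|_M$ is a local diffeomorphism is short once one uses pointwise positive definiteness of the induced metric; the contradiction then rests solely on the non-compactness and connectedness of $\RN$.
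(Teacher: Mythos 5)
Your proof is correct, and since the paper does not include its own argument for this proposition (it defers to \cite{Lopez2014} and \cite{BIA2}), the comparison can only be with the standard reference proof, which yours matches: a height-function maximum shows the tangent plane becomes horizontal and hence spacelike, killing the timelike and lightlike cases, while the spacelike case is handled by the projection $\pi$ to $\RN$, which positive-definiteness of the induced metric makes an immersion, so a connected component of $M$ would have open-and-closed, hence full, image in the non-compact $\RN$. Both steps are carried out correctly and cleanly.
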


From now we focus only on the case of spacelike vertical graphs. Let $\{\mathbf{e_1},\ldots, \mathbf{e_{N+1}}\}$ be the standard basis of $\LN$ (i.e. the standard basis of $\R^{N+1}$). According to the notations of \cite{BS}, we agree that the indices $i,j$ have the range $1,\ldots,N$, while the indices $\mathcal{I},\mathcal{J}$ have the range $1,\ldots,N+1$, and we observe that $(\mathbf{e_{\mathcal{I}}}, \mathbf{e_\mathcal{J}})_{\LN}=0$ if $\mathcal{I}\neq \mathcal{J}$, $(\mathbf{e_{i}},\mathbf{e_{i}})_{\L^{N+1}}=1$ and  $(\mathbf{e_{N+1}},\mathbf{e_{N+1}})_{\L^{N+1}}=-1$.
\begin{definition}
We say that a timelike vector $\mathbf{x}\in\LN$ is future-directed (resp. past-directed) if $(\mathbf{x},\mathbf{e_{N+1}})_{\LN}<0$ (resp. $(\mathbf{x},\mathbf{e_{N+1}})_{\LN}>0$).
\end{definition}

Let $\Omega\subset \RN$ be a domain (bounded or unbounded) and let $u\in C^1(\Omega)$, the associated vertical graph is
$$M=\{(x,u(x)) \in \LN; \ x\in \Omega\}.$$
An obvious parametrization is given by $\mathbf{\Phi}:\Omega\to\LN$, $\mathbf{\Phi}(x)=(x,u(x))$ and thus
\beq\label{eq:BaseTpM}
\mathbf{X_i}:=\frac{\partial \mathbf{\Phi}}{\partial x_i}=\mathbf{e_i}+u_i\mathbf{e_{N+1}}, \ \ i=1,\ldots,N,
\eeq
 is a basis of tangent vectors for $T_{{\mathbf{p}}}M$, where ${\mathbf{p}}=(x,u(x))$. The induced metric on $M$ is given by $g=(g_{ij})_{i,j=1,\ldots,N}$, where $$g_{ij}=(\mathbf{X_i}, \mathbf{X_j})_{\L^{N+1}}=\delta_{ij}-u_iu_j.$$
Since the determinant of the principal minor of order $k$ of the matrix $(g_{ij})_{i,j=1,\ldots,N}$, denoted by $g_k$, is $Det(g_k)=1-\sum_{i=1}^ku_i^2$,  it is clear that $M$ is spacelike (resp. timelike, lightlike) if and only if $|\nabla u|<1$ (reps. timelike if and only if $|\nabla u|>1$,  lightlike if and only if $|\nabla u|=1$), for all $x\in \Omega$. 
Accordingly, we say that $u\in C^1(\Omega)$ is strictly spacelike if $|\nabla u|<1$.

Let $u\in C^2(\Omega)$ be a strictly spacelike function and let $M$ be the associated cartesian graph. The future-directed Gauss map is expressed by
$$\bm{\nu}(x)=\frac{(\nabla u(x), 1)}{\sqrt{1-|\nabla u(x)|^2}},$$
and we denote by $\nu_1,\ldots, \nu_{N+1}$ its components.
Recalling the notation $v=\sqrt{1-|\nabla u|^2}$ introduced in \eqref{eq:v}, we can then write
\beq\label{eq:Gausscomponents}
\bm{\nu}=\sum_{i=1}^N \nu_i \mathbf{e_i} + \nu_{N+1} \mathbf{e_{N+1}} = \sum_{i=1}^N \frac{u_i}{v} \mathbf{e_i} + \frac{1}{v} \mathbf{e_{N+1}}.
\eeq
The coefficients and the norm of the second fundamental form $\mathrm{II}$ of $M$ are given, respectively, by 
$$
{\mathrm{II}}_{ij}=(\mathbf{X_i},\nabla_{\mathbf{X_j}} \bm{\nu})_{\L^{N+1}}=\frac{1} {v}u_{ij}, 
$$
and
\begin{equation}\label{normSecForm} 
 \|{\mathrm{II}} \|^2:=\sum_{i,j,k,l=1}^N g^{ij}g^{kl}{{\mathrm{II}}}_{ik}{{\mathrm{II}}}_{jl}=\frac{1}{v^2}\sum_{i,j,k,l=1}^Ng^{ij}g^{kl}u_{ik}u_{jl},
 \end{equation}
where $\{\mathbf{X_i}\}_{i=1,\ldots,N}$ is the basis of $T_{\mathbf{p}}M$ found in \eqref{eq:BaseTpM}, $g^{-1}=(g^{ij})_{ij=1,\ldots,N}$, $g^{ij}=\delta_{ij}+\nu_i \nu_j$ is the inverse matrix of $g$. Notice that from \eqref{normSecForm}, see also \eqref{eq:finalmainteo}, it follows that
\begin{equation}\label{eq:NormSecForm2}
 \begin{array}{lll}
 \displaystyle  v^2\|{\mathrm{II}} \|^2 &= &  \displaystyle \sum_{i,k=1}^N u_{ik}^2 + 2 \sum_{i=1}^N\left(v^{-1}\sum_{k=1}^Nu_k u_{ik}\right)^2 + v^{-2}\left(v^{-1}\sum_{i,k=1}^N u_i u_k u_{ik} \right)^2\\[16pt]
  &= &  \displaystyle |D^2 u|^2 + 2|\nabla v|^2 + v^{-2} |(\nabla u, \nabla v)_{\RN}|^2. 
 \end{array}
 \end{equation}
The mean curvature of $M$ (see \cite{Lopez2014} for more details) is defined as 
\beq\label{def:meancurv}
H:=-\sum_{i,j=1}^N g^{ij}{{\mathrm{II}}}_{ij}=-\frac{1}{v} \sum_{i,j=1}^N g^{ij} u_{ij}.
\eeq
Notice that for notational convenience, and differently from \cite{Lopez2014} (and other references), we do not divide by $N$ in the definition of $H$. Moreover, when applying the results of \cite{BS} we stress that $H$ has the opposite sign.

We recall now some explicit formulas for the differential operators on a spacelike vertical graph $M$ (we refer to \cite[Section 2]{BS} for more details). Let $\bm{\delta}=\operatorname{grad}_M$, $\operatorname{div}_M$, $\Delta_M$ be, respectively, the gradient, the divergence and the Laplace-Beltrami operators on $M$. Assume $W \subset \L^{N+1}$ is an open neighborhood of $M$, $\mathbf{Y}$ is a $C^1$ vector field on $W $, and $f \in C^1(W)$ is such that $\frac{\partial f}{\partial x_{N+1}}=0$. We have
\beq\label{eq:Usefulform}
\begin{array}{lll}
\displaystyle \bm{\delta} f = \sum_{\mathcal{I}=1}^{N+1} (\delta_{\mathcal{I}}f)\mathbf{e_{\mathcal{I}}},\ \ \hbox{where}\  \delta_i f=\sum_{j=1}^N  g^{ij} \frac{\partial f}{\partial x_j},\ \delta_{N+1} f=\frac{1}{v}\sum_{i=1}^N  \nu_i \frac{\partial f}{\partial x_i},&&\\[6pt]
\displaystyle \|\bm{\delta} f\|_{\LN}^2 = \sum_{i,j=1}^{N+1} g^{ij}  \frac{\partial f}{\partial x_i} \frac{\partial f}{\partial x_j} = |\nabla f|^2 + \left( \sum_{i=1}^N\nu_i  \frac{\partial f}{\partial x_i}\right)^2,&&\\[16pt]
\displaystyle \operatorname{div}_M \mathbf{Y} = \sum_{i,j=1}^N g^{ij} (\mathbf{X_i}, \nabla_{\mathbf{X_j}} \mathbf{Y})_{\LN}.&&\\
\end{array}
\eeq
Moreover, if in addition $f \in C^2(W)$ then, recalling \eqref{def:meancurv}, we have
\beq\label{eq:UsefulformBis}
\displaystyle \Delta_M f =  \operatorname{div}_M \left( \operatorname{grad}_M f \right)= \sum_{i,j=1}^N g^{ij} \frac{\partial^2 f}{\partial x_i \partial x_j} - \sum_{i=1}^NH \nu_i \frac{\partial f}{\partial x_i} .
\eeq
Another tool that will be used in the next section is the following corollary of Stokes's theorem (for the proof see \cite[Sect. 2]{BS}).

\begin{proposition}[Integration by parts] For any $f \in C_c^1(M)$, $g \in C^1(M)$ one has
\beq\label{eq:integparts}
\begin{array}{lll}
\displaystyle \int_{M} f \delta_{N+1} g\ dA &=& \displaystyle  \int_{M}\frac{1}{v} f g H \ dA - \int_{M} g \delta_{N+1} f\ dA, \\[12pt]
\displaystyle \int_{M} f \delta_{i} g\ dA &=&\displaystyle  \int_{M}\nu_i f g H \ dA-  \int_{M} g \delta_{i} f\ dA , \ \ \ i=1,\ldots,N,
\end{array}
\eeq
where $dA=v dx$ is the induced volume form on $M$, $dx$ being the Lebesgue measure on $\R^N$.
\end{proposition}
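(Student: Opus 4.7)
The plan is to reduce the proposition to the standard divergence theorem on the Riemannian manifold $(M,g)$, which is legitimate because the spacelike assumption on $u$ yields a positive definite induced metric on $M$. Since $f\in C^1_c(M)$ and $g\in C^1(M)$, the product $fg$ is compactly supported on $M$, hence so is the tangent vector field $Z_\mathcal{I}:=fg\,\operatorname{grad}_M x_\mathcal{I}$, where $x_\mathcal{I}$ is the restriction to $M$ of the $\mathcal{I}$-th ambient coordinate. Thus $\int_M\operatorname{div}_M Z_\mathcal{I}\,dA=0$, and expanding via the Leibniz rule $\operatorname{div}_M(h\,\operatorname{grad}_M\varphi)=h\,\Delta_M\varphi+(\operatorname{grad}_M h,\operatorname{grad}_M\varphi)_{\LN}$ with $h=fg$ and $\varphi=x_\mathcal{I}$, the conclusion \eqref{eq:integparts} will follow once two pointwise identities on $M$ are established: $(\operatorname{grad}_M h,\operatorname{grad}_M x_\mathcal{I})_{\LN}=\delta_\mathcal{I} h$ for every $h\in C^1(M)$, and $\Delta_M x_\mathcal{I}=-H\,\nu^\mathcal{I}$, where $\nu^i=u_i/v$ for $i\le N$ and $\nu^{N+1}=1/v$ denote the ambient components of the future-directed Gauss map $\nu$.

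The first identity is essentially definitional: writing $\operatorname{grad}_M x_\mathcal{I}=\operatorname{grad}_{\LN}x_\mathcal{I}+(\operatorname{grad}_{\LN}x_\mathcal{I},\nu)_{\LN}\nu$ as the tangential projection of the ambient Lorentzian gradient (which equals $e_i$ for $\mathcal{I}=i\le N$ and $-e_{N+1}$ for $\mathcal{I}=N+1$), the tangency of $\operatorname{grad}_M h$ annihilates the $\nu$-contribution and the expansion $\operatorname{grad}_M h=\sum_i\delta_i h\,e_i+\delta_{N+1}h\,e_{N+1}$ from \eqref{eq:Usefulform} immediately produces $\delta_\mathcal{I} h$ in both cases. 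For the second identity I would apply \eqref{eq:UsefulformBis}: when $\mathcal{I}=k\le N$ the relation $\Delta_M x_k=-H\nu_k$ is immediate since $(x_k)_{ij}\equiv 0$. The delicate case is $\mathcal{I}=N+1$, since on $M$ the function $x_{N+1}$ coincides with $u(x_1,\dots,x_N)$ and \eqref{eq:UsefulformBis} requires a representative independent of $x_{N+1}$; the fix is to extend it off $M$ by the admissible assignment $(x_1,\dots,x_{N+1})\mapsto u(x_1,\dots,x_N)$. Plugging into \eqref{eq:UsefulformBis}, rewriting \eqref{def:meancurv} as $\sum_{i,j}g^{ij}u_{ij}=-vH$, and using $\sum_i\nu_i u_i=v^{-1}|\nabla u|^2=(1-v^2)/v$ yields $\Delta_M x_{N+1}=-vH-H(1-v^2)/v=-H/v=-H\nu^{N+1}$, as required.

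Combining these ingredients, $\int_M\operatorname{div}_M Z_\mathcal{I}\,dA=0$ becomes $\int_M\bigl(f\delta_\mathcal{I} g+g\delta_\mathcal{I} f-fgH\nu^\mathcal{I}\bigr)\,dA=0$, which is precisely \eqref{eq:integparts} for $\mathcal{I}\le N$ (with $\nu^i=\nu_i$) and for $\mathcal{I}=N+1$ (with $\nu^{N+1}=1/v$). The main obstacle is the careful bookkeeping of signs imposed by the Lorentzian signature, in particular verifying $\Delta_M x_{N+1}=-H/v$ via the off-$M$ extension argument, where the sign convention for $H$ adopted here (opposite to \cite{Lopez2014}) must be tracked throughout; once this is in place, the remainder is merely the standard Riemannian divergence theorem applied to a compactly supported tangent vector field, combined with the product rule for $\operatorname{div}_M$.
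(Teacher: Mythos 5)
Your argument is correct, and it realizes precisely the approach the paper gestures at when it calls the statement ``a corollary of Stokes's theorem'' and defers to \cite[Sect.~2]{BS}: apply the Riemannian divergence theorem on $M$ to the compactly supported tangent field $fg\,\operatorname{grad}_M x_{\mathcal I}$, reduce via the Leibniz rule, and invoke the two pointwise identities $(\operatorname{grad}_M h,\operatorname{grad}_M x_{\mathcal I})_{\LN}=\delta_{\mathcal I}h$ and $\Delta_M x_{\mathcal I}=-H\nu^{\mathcal I}$, both of which you derive correctly from \eqref{eq:Usefulform}, \eqref{eq:UsefulformBis} and \eqref{def:meancurv} with the paper's sign convention for $H$. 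In particular the delicate computation $\Delta_M x_{N+1}=-vH-H(1-v^2)/v=-H/v$ is accurate, so the proof is complete.
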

We conclude this section by recalling some properties and two important identities involving the gradient and the Laplacian of the Lorentz distance that will play a crucial role in the proof of Theorem \ref{teo:mainestimate}. We begin with a definition.
\begin{definition}\label{defLorentzdist}
Let $u\in C^{0,1}(\RN)$ be a spacelike function and let $x_0\in\RN$. 
We define the Lorentz distance from $(x_0,u(x_0))$ as $$l(x,x_0):=[|x-x_0|^2-(u(x)-u(x_0))^2]^{1/2}.$$ Given $R>0$ and $M=graph(u)$, we define the Lorentz ball of radius $R$ centred at $(x_0,u(x_0))$ as 
$$L_R(x_0):=\{(x,u(x)) \in M; \ l(x,x_0)<R\}$$ 
and its projection on $\RN$ as  $$K_R(x_0):=\{x \in \RN; \ l(x,x_0)<R\}.$$
 \end{definition} 
 For simplicity, when $x_0\in\RN$ is fixed and there is no possible confusion we will adopt the simpler notations $l=l(x)$, $L_R$ and $K_R$.\\ 
 
 \begin{remark}\label{rem:Lorentzball}
From Definition \ref{defLorentzdist} it is clear that the notions of Lorentz distance and Lorentz ball can be extended to the class of weakly spacelike functions. However, observe that if $u\in C^{0,1}(\RN)$ contains a light ray, namely if there exist $x_0, x_1\in \RN$ such that for all $t\geq 0$,
$$|u(x_0+t(x_1-x_0)) -u(x_0)|=t|x_1-x_0|,$$   
then $l(x,x_0)=0$ for any $x=x_0+t(x_1-x_0)$, $t\geq0$, so that $l$ is not a distance in the usual meaning and $K_R(x_0)$ turns out to be an unbounded subset of $\RN$. Nevertheless, if $u$ is weakly spacelike and $u\in L^\infty(\RN)$ then $K_R(x_0)$ is bounded. The same conclusion holds true also for (possibly) unbounded strictly spacelike functions $u$, provided that $R$ is small enough. This is the content of the next lemma.
 \end{remark}
 \begin{lemma}\label{lem:boundedLorentzballX}
 The following assertions hold.
 \begin{itemize}
 \item[i)] Let $u\in C^{0,1}(\RN) \cap  L^\infty(\RN)$ be weakly spacelike. Then, given $R>0$ and $x_0\in\RN$, $K_R(x_0)$ is bounded and $K_R(x_0) \subset B_{R^\prime}(x_0)$, with $R^\prime=\sqrt{R^2+4|u|_\infty^2}$;
\item[ii)] Let $u\in C^1(\RN)$ be strictly spacelike. Then, for any $x_0\in \RN$ there exists $R>0$ such that $K_R(x_0)$ is bounded.
\end{itemize}
 \end{lemma}
 \begin{proof}
We start with assertion i). Take $x_0\in \RN$ and $R>0$. By definition of $K_R(x_0)$, it is clear that
$$|x-x_0|^2<R^2+(u(x)-u(x_0))^2\leq R^2+4|u|_\infty^2,$$ 
for any $x\in K_R(x_0)$, so that i) holds.

In order to prove ii), we fix $x_0\in \RN$ and set
$$\zeta:=\inf_{B_1(x_0)}(1-|\nabla u|).$$
As $u$ is strictly spacelike, we know that $\zeta>0$ and for any $t\in]0,1]$ and any $\omega \in \R^N$ with $|\omega|=1$, we readily check that
\begin{eqnarray*}
\frac{d}{dt}\left[l^2(x_0,x_0+t\omega)\right] &=&2t - 2(u(x_0+t\omega)-u(x_0))(\nabla u(x_0+t\omega) \cdot \omega)\\
 &\geq& 2t - 2(1-\zeta)^2t = 2t(2\zeta-\zeta^2)>0,
\end{eqnarray*}
because $0<\zeta\le 1$.
We then infer that 
$$l^2(x_0,x_0+\omega)=\int_0^1\frac{d}{dt}\left[l^2(x_0,x_0+t\omega)\right] \, dt\geq 2\zeta-\zeta^2,$$
which means that
$$l^2(x_0,x)\geq 2\zeta-\zeta^2, \text{ for any } x\in\partial B_1(x_0).$$ 
We now conclude that $K_R(x_0)\subset B_1(x_0)$ as long as $0<R<\sqrt{2\zeta-\zeta^2}$, i.e. ii) holds.
\end{proof}

Let us fix $x_0\in\RN$, assume that $u\in C^2(\RN)$ is strictly spacelike, set $\mathbf{X_0}:=(x_0,u(x_0))\in \LN$ and let $l$ be the Lorentz distance (from $\mathbf{X_0}$) associated to $u$. We notice that since $l=l(x)$ does not depend on the variable $x_{N+1}$ and $u$ is strictly spacelike then $l$ is naturally defined and of class $C^2$ in a neighborhood $W\subset \LN$ of $M$.  In particular, using  \eqref{eq:Usefulform}-\eqref{eq:UsefulformBis} we can prove that (see \cite[(2.10)]{BS})

\beq\label{eq:Usefulform2a}
\displaystyle \|\bm{\delta} l\|_{\LN}^2 = 1 + l^{-2} (\bm{\nu}, \mathbf{X}-\mathbf{X_0})_{\LN}^2,
\eeq
\beq\label{eq:Usefulform2b}
\displaystyle \Delta_M\left(\frac{1}{2} l^2\right)=N -\left(H\bm{\nu},  \mathbf{X}-\mathbf{X_0}\right)_{\LN}.
\eeq
Finally, we recall the following special case of Federer's coarea formula (see \cite[(2.14)]{BS}, \cite[Theorem 3.2.12]{FED} or \cite[Section 3.4.4]{Evans})
\begin{theorem}
Let $u\in C^1(\RN)$ be a strictly spacelike function, $M=graph(u)$, $x_0\in\RN$, $s>0$ and assume that $K_s(x_0)$ is bounded. For any $h \in C^0(M)$ we have
\beq\label{eqGF2}
D_s \left[ \int_{L_s(x_0)} h \ dA\right] = \int_{\partial L_s(x_0)} h \|\bm{\delta} l\|_{\LN}^{-1} \ d\mu,
\eeq
where $d\mu$ is the surface measure on $\partial L_s(x_0)$.
\end{theorem}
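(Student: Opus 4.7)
The plan is to apply Federer's coarea formula directly on the Riemannian manifold $M$ (with its induced metric $g_{ij}=\delta_{ij}-u_iu_j$, which is positive definite because $u$ is strictly spacelike), with the Lipschitz function $f=l(\cdot,x_0)$. Since $l^2(x,x_0)=|x-x_0|^2-(u(x)-u(x_0))^2$ and $u\in C^1(\RN)$, the function $l^2$ is $C^1$ globally and $l$ itself is locally Lipschitz on $\RN$ and smooth away from $x_0$. The assumption that $K_s(x_0)$ is bounded ensures that $\overline{L_s(x_0)}$ is a compact subset of $M$, so integrals of the continuous function $h$ against $dA$ and $d\mu$ are finite.

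The Riemannian norm of the tangential gradient of $l$ on $M$ coincides with $\|\delta l\|_{\LN}$, and by \eqref{eq:Usefulform2a} satisfies $\|\delta l\|_{\LN}\ge 1$ on $M\setminus\{X_0\}$, so $\|\delta l\|_{\LN}^{-1}$ is bounded and continuous there. Applying Federer's coarea formula \cite[Theorem 3.2.12]{FED} to the Lipschitz function $l$ and the integrand $h\|\delta l\|_{\LN}^{-1}$ (extended by zero at the negligible point $X_0$) yields
\begin{equation*}
\int_{L_s(x_0)} h\, dA \,=\, \int_{L_s(x_0)} \bigl(h\,\|\delta l\|_{\LN}^{-1}\bigr)\,\|\delta l\|_{\LN}\, dA \,=\, \int_0^s \Phi(\sigma)\, d\sigma,
\end{equation*}
where $\Phi(\sigma):=\int_{\partial L_\sigma(x_0)} h\,\|\delta l\|_{\LN}^{-1}\, d\mu$. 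Sard's theorem applied to $l$ on $M\setminus\{X_0\}$ guarantees that almost every $\sigma\in(0,s)$ is a regular value, so $\partial L_\sigma(x_0)$ is a smooth hypersurface of $M$ for a.e.\ $\sigma$, which is sufficient to make sense of the surface measure in $\Phi$.

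To convert the integral identity into the differential form \eqref{eqGF2}, it then suffices to show that $\Phi$ is continuous on $(0,s]$, after which the fundamental theorem of calculus applies and gives $D_s\bigl[\int_{L_s(x_0)}h\,dA\bigr]=\Phi(s)$. This continuity I would establish via a standard tubular-neighborhood argument around any regular level set $\partial L_{\sigma_0}$: the gradient flow of $\delta l/\|\delta l\|_{\LN}^2$ produces a diffeomorphism $\partial L_{\sigma_0}\times(\sigma_0-\varepsilon,\sigma_0+\varepsilon)\to$ (neighborhood of $\partial L_{\sigma_0}$) that transports surface measure continuously in $\sigma$; combined with continuity of $h$ and of $\|\delta l\|_{\LN}^{-1}$ this yields continuity of $\Phi$. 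The only genuine subtlety is the behavior at the center $X_0$, where $l$ fails to be smooth, but $\{X_0\}$ has zero $dA$-measure and never belongs to $\partial L_\sigma$ for $\sigma>0$, so it can be harmlessly excised from all integrals.
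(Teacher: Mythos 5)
Your overall strategy---applying Federer's coarea formula on $(M,g)$ to obtain $\int_{L_s}h\,dA=\int_0^s\Phi(\sigma)\,d\sigma$ and then upgrading the a.e.\ derivative to a pointwise one by showing $\Phi$ is continuous---is sound and is essentially what the references \cite{BS,FED,Evans} cited here do; the paper itself states the result without proof.

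However, two of the tools you invoke are not available at the stated regularity $u\in C^1(\RN)$. Sard's theorem for a real-valued function on an $N$-dimensional manifold requires $C^N$ smoothness, not $C^1$; fortunately you do not need it at all, because \eqref{eq:Usefulform2a} gives $\|\delta l\|_{\LN}\ge 1>0$ on $M\setminus\{X_0\}$, so \emph{every} $\sigma>0$ is a regular value and each $\partial L_\sigma(x_0)$ is automatically a $C^1$ hypersurface. More seriously, the gradient flow of $\delta l/\|\delta l\|_{\LN}^2$ requires $\delta l$ to be locally Lipschitz for uniqueness of the flow, i.e.\ $l\in C^{1,1}$ and hence $u\in C^{1,1}$; with $u\in C^1$ only, $\delta l$ is merely continuous and the flow-based diffeomorphism is not justified. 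To recover continuity of $\Phi$ at $C^1$ regularity, replace the flow by a direct implicit-function-theorem argument: since $\delta l\neq0$, near any point of $\partial L_{\sigma_0}$ you can solve $l(y,t)=\sigma$ for one coordinate $t$ as a jointly $C^1$ function of the remaining coordinates $y$ and of $\sigma$; its $y$-derivatives are continuous ratios of first derivatives of $l$, so the local area element depends continuously on $\sigma$, and compactness of $\partial L_{\sigma_0}$ together with a partition of unity finishes the argument. In the paper's actual applications $u\in C^3$, so nothing downstream is affected, but as written your proof does not establish the theorem under its own $C^1$ hypothesis.
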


\section{Proof of the gradient estimate \eqref{eqTesiProp2}}

In this section we prove Theorem \ref{teo:mainestimate}. We begin with a preliminary technical result, which is a generalization of the classical Gronwall's lemma.
\begin{lemma}\label{lem:genGronwall}
Let $T>0$, $\beta \in (0,2)$, $q>2$ and assume $\psi:[0,T]\to[0,+\infty[$ is a continuous function such that
\begin{equation}\label{lemtech:hyp}
\psi(t) \leq C_0 + C_1\int_0^t s^{1-\beta} (\psi(s))^{\frac{q-2}{q}} \ ds +  C_2\int_0^t s^{-\frac{\beta}{2}} (\psi(s))^{\frac{q-1}{q}} \ ds \ \ \forall t\in [0,T]
\end{equation}
for some constants $C_0>0$, $C_1,C_2\geq 0$. Then
\begin{equation}
\psi(t) \leq \left(C_0^{\frac{1}{q}} + \frac{C_0^{-\frac{1}{q}}C_1}{q(2-\beta)} t^{2-\beta} +\frac{C_2}{q(1-\frac{\beta}{2})} t^{1-\frac{\beta}{2}} \right)^q \ \ \forall t\in [0,T].
\end{equation}
\end{lemma}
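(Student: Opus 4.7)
The plan is to rewrite the integral inequality \eqref{lemtech:hyp} as a differential inequality for the $q$-th root of the right-hand side. Concretely, I would set
\[
f(t):=C_0+C_1\int_0^t s^{1-\beta}\psi(s)^{(q-2)/q}\,ds+C_2\int_0^t s^{-\beta/2}\psi(s)^{(q-1)/q}\,ds.
\]
Since $\beta\in(0,2)$, the exponents $1-\beta>-1$ and $-\beta/2>-1$ together with the boundedness of $\psi$ on $[0,T]$ make both integrands Lebesgue integrable near $0$, so $f$ is a continuous, nondecreasing function on $[0,T]$ with $f(0)=C_0>0$, of class $C^1$ on $(0,T]$. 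Since $\psi\le f$ and $(q-2)/q,\,(q-1)/q>0$, one has
\[
f'(t)\le C_1 t^{1-\beta}f(t)^{(q-2)/q}+C_2 t^{-\beta/2}f(t)^{(q-1)/q}\quad\text{on }(0,T].
\]

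Next I would introduce $g:=f^{1/q}$, so $f=g^q$ and $f'=qg^{q-1}g'$. Dividing the previous inequality by $g^{q-1}>0$ yields
\[
qg'(t)\le C_1 t^{1-\beta}g(t)^{-1}+C_2 t^{-\beta/2}.
\]
The only delicate point is the $g^{-1}$ factor, which I would dispose of using the monotonicity of $f$: since $f(t)\ge f(0)=C_0$, we have $g(t)\ge C_0^{1/q}$, hence $g(t)^{-1}\le C_0^{-1/q}$. The inequality then becomes explicit:
\[
qg'(t)\le C_1 C_0^{-1/q}t^{1-\beta}+C_2 t^{-\beta/2}.
\]

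Finally, integrating from $0$ to $t$ (both terms are integrable at $0$) and dividing by $q$ gives
\[
g(t)\le C_0^{1/q}+\frac{C_0^{-1/q}C_1}{q(2-\beta)}t^{2-\beta}+\frac{C_2}{q(1-\beta/2)}t^{1-\beta/2},
\]
and raising to the $q$-th power, together with $\psi(t)\le f(t)=g(t)^q$, yields the stated bound. The main conceptual subtlety is precisely the appearance of $g^{-1}$ after the substitution; the two-term form of the conclusion—rather than a coupled Gronwall-type exponential—is preserved exactly because this singular factor is neutralized by a single uniform lower bound $g\ge C_0^{1/q}$, so the two integral contributions stay decoupled.
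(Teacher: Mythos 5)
Your proof is correct and follows essentially the same route as the paper: your $f(t)$ equals $C_0+y(t)$ in the paper's notation, and your substitution $g=f^{1/q}$ is precisely the change of variable the paper performs when integrating $y'/(C_0+y)^{(q-1)/q}$ to obtain $q(C_0+y)^{1/q}-qC_0^{1/q}$. The key step of bounding $g^{-1}\le C_0^{-1/q}$ matches the paper's use of $(C_0+y(t))^{-1/q}\le C_0^{-1/q}$, so the two arguments are the same up to notation.
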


\begin{proof}
Let $T>0$, $\beta\in(0,2)$ and consider the auxiliary function $y:[0,T] \to [0,+\infty[$ defined by
$$y(t):=C_1\int_0^t s^{1-\beta} (\psi(s))^{\frac{q-2}{q}} \ ds +  C_2\int_0^t s^{-\frac{\beta}{2}} (\psi(s))^{\frac{q-1}{q}} \ ds.$$

By definition and thanks to \eqref{lemtech:hyp} we infer that
\begin{eqnarray}\label{eq:propgrom}
\displaystyle y^\prime(t)&=& \displaystyle C_1 t^{1-\beta} (\psi(t))^{\frac{q-2}{q}} +  C_2 t^{-\frac{\beta}{2}} (\psi(t))^{\frac{q-1}{q}}\nonumber \\
&\leq&\displaystyle C_1 t^{1-\beta} (C_0+y(t))^{\frac{q-2}{q}}  +  C_2 t^{-\frac{\beta}{2}} (C_0+y(t))^{\frac{q-1}{q}} \ \  \ \forall t\in(0,T].
\end{eqnarray}
Since $y$ is a non-negative function and $C_0>0$, dividing each side of \eqref{eq:propgrom} by $(C_0+y(t))^{\frac{q-1}{q}}$ we get, for any $t\in(0,T]$,
$$ \frac{y^\prime(t)}{(C_0+y(t))^{\frac{q-1}{q}}} \leq C_1 \frac{t^{1-\beta}} {(C_0+y(t))^{\frac{1}{q}}} +  C_2 t^{-\frac{\beta}{2}} \leq\ C_0^{-\frac{1}{q}}C_1 {t^{1-\beta}} +  C_2 t^{-\frac{\beta}{2}}.$$
Then  integrating on $(0,t)$, taking into account that $\beta\in(0,2)$, we obtain
$$ \int_0^t \frac{y^\prime(s)}{(C_0+y(s))^{\frac{q-1}{q}}} \ ds \leq  \frac{C_0^{-\frac{1}{q}}C_1}{2-\beta} {t^{2-\beta}} +  \frac{C_2}{1-\frac{\beta}{2}} t^{1-\frac{\beta}{2}}  \ \ \forall t \in (0,T].$$
Changing variable in the first integral, we deduce that
\begin{equation}\label{eq:propgrom2}
q(C_0+y(t))^{\frac{1}{q}}-qC_0^{\frac{1}{q}}=  \int_{C_0}^{C_0+y(t)} k^{-\frac{q-1}{q}} \ dk\leq \frac{C_0^{-\frac{1}{q}}C_1}{2-\beta} {t^{2-\beta}} +  \frac{C_2}{1-\frac{\beta}{2}} t^{1-\frac{\beta}{2}} \ \  \forall t \in [0,T],
\end{equation}
since by definition $y(0)=0$.
Finally, recalling that $\psi(t)\leq C_0+y(t)$, from \eqref{eq:propgrom2} we readily obtain
$$\psi(t) \leq  \left(C_0^{\frac{1}{q}} + \frac{C_0^{-\frac{1}{q}}C_1}{q(2-\beta)} t^{2-\beta} +\frac{C_2}{q(1-\frac{\beta}{2})} t^{1-\frac{\beta}{2}} \right)^q \ \ \forall t\in [0,T].$$
\end{proof}

\begin{proof}[Proof of Theorem \ref{teo:mainestimate}]
 Let $N\geq3$, $q>N$, $x_0 \in \R^N$, $R >0$ and assume $\rho$, $u$, $K_R(x_0)$ are as in the statement of Theorem \ref{teo:mainestimate}.
Observe first that since $\hat u(x):=u(x+x_0)-u(x_0)$ is still a solution to \eqref{eq:BI2} with $\rho$ replaced by $\check \rho(x):=\rho(x+x_0)$, and since $\nabla \hat u(0)=\nabla u(x_0)$, $|\rho|_{q,K_R(x_0)}=|\check \rho|_{q,\hat{K}_R(0)}$, where $\hat{K}_R(0)$ is the projection of the Lorentz ball associated to $\hat u$, we can assume without loss of generality that $\mathbf{X_0}:=(x_0,u(x_0))=(0,0)$.

Let $\mathbf{X}:=(x,u(x)) \in \L^{N+1}$ be the position vector, $M=\mathrm{graph}(u)$ and $s \in (0,R]$. Since $u$ is a strictly spacelike classical solution to \eqref{eq:BI2}, $M$ has mean curvature $\rho$. Therefore, as a consequence of Green's formula, Proposition \ref{eq:integparts}, \eqref{eq:Usefulform2a},\eqref{eq:Usefulform2b} and Federer's coarea formula \eqref{eqGF2}, for any $f \in C^2(M)$ we have
\begin{eqnarray}
\label{eq:monotform}
\displaystyle D_s\left[s^{-N}  \int_{ L_s}  f   \ dA\right] &=& \displaystyle \int_{L_s} s^{-N-1} \left(\frac{1}{2}(s^2-l^2) \Delta_M f - f \rho (\mathbf{X},\bm{\nu})_{\LN}\right) \ dA\nonumber \\
 &&\displaystyle  -D_s\left[\int_{ L_s}  f l^{-N-2}(\mathbf{X}, \bm{\nu})_{\LN}^2  \ dA\right], 
\end{eqnarray}
where $L_s$, $K_s$ denote, respectively, the Lorentz ball and its projection on $\RN$ associated to $u$, centred at the origin and of radius $s$ (notice that $L_s$ is a bounded subset of $M$, for any $s\in(0,R]$ because by assumption $K_R$ is bounded). We refer to \cite[(2.15)]{BS} or \cite[(3.4)]{BIA} for the proof of \eqref{eq:monotform}.

Let $\gamma$ be a positive number to be determined later and set $v:=\sqrt{1-|\nabla u|^2}$. Notice that since $u\in C^3(\RN)$ is strictly spacelike then $v^\gamma\in C^2(\RN)$. Moreover, since $v^\gamma$ does not depend on the variable $x_{N+1}$, it is naturally defined and of class $C^2$ in a whole neighborhood of $M$. Exploiting \eqref{eq:Usefulform} and \eqref{eq:UsefulformBis}, we check that
\begin{eqnarray}
\label{eq:monotform2}
\displaystyle \Delta_M v^\gamma \!\!&=& \!\! \displaystyle \gamma v^{\gamma-1} \Delta_M v + \gamma (\gamma - 1) v^{\gamma-2} \|\bm{\delta} v\|_{\LN}^2\nonumber\\
&=&\!\! \displaystyle - \gamma v^{\gamma-2} \left[ \sum_{i,j=1}^N u_{ij}^2 - \gamma \left( \sum_{i=1}^N u_{ii}\right)^2 + (1-\gamma) v \rho \sum_{i=1}^N u_{ii} + v^2\rho^2\!\!+ (1-\gamma) \sum_{j=1}^N\left(\sum_{i=1}^N \nu_i u_{ij}\right)^2\right]\nonumber\\
&& \displaystyle + \gamma \delta_{N+1} \left(v^{\gamma+1}\rho \right).
\end{eqnarray}

By simple algebraic manipulations and using the well known trace inequality 
$$\left(\sum_{i=1}^N u_{ii}\right)^2 \leq N \sum_{i,j=1}^N u_{ij}^2,$$ 
and Young inequality 
\beq\label{eq:elementinterpolineq}
ab\leq \e a^2 + \frac{1}{4\e} b^2  \ \ \forall a,b\geq 0, \ \forall \e>0,
\eeq
 we find two constants $\gamma \in (0,\frac{1}{N})$ and $C>0$, both depending on $N$ only, such that
\beq\label{eq:monotform3}
 \Delta_M v^\gamma \leq - C v^{\gamma-2} \left[\sum_{i,j=1}^N u_{ij}^2 + \sum_{j=1}^N\left(\sum_{i=1}^N \nu_i u_{ij}\right)^2 \right] + \frac{1}{4} v^\gamma \rho^2 + \gamma \delta_{N+1} \left(v^{\gamma+1}\rho \right).
\eeq
Applying \eqref{eq:monotform} with $f=v^\gamma$ and exploiting \eqref{eq:monotform3}, we get
\begin{eqnarray}\label{monotform}
\displaystyle D_s\left\{s^{-N} \int_{L_s} v^\gamma \ dA\right\} 
&\leq&  -C \int_{L_s} \frac{1}{2} s^{-N-1} (s^2-l^2)\left(\sum_{i,j=1}^N u_{ij}^2 + \sum_{j=1}^N\left(\sum_{i=1}^N \nu_i u_{ij}\right)^2\right) v^{\gamma-2} \ dA\nonumber\\
&&+\underbrace{ \int_{L_s} \frac{1}{2} s^{-N-1} (s^2-l^2)\left(\frac{1}{4} v^\gamma \rho^2 + \gamma \delta_{N+1}(v^{\gamma+1}\rho)\right) \ dA}_{(I_1)}\nonumber\\
&&+\underbrace{\int_{L_s}  s^{-N-1} \rho ( \mathbf{X}, \bm{\nu})_{\L^{N+1}} v^{\gamma} \ dA}_{(I_2)} -  \displaystyle  D_s\left\{\int_{L_s} ( \mathbf{X}, \bm{\nu})_{\L^{N+1}}^2 l^{-N-2}v^\gamma \ dA\right\}.
\end{eqnarray}
Let us analyze the terms $(I_1)$ and $(I_2)$. For $(I_1)$, we write
\begin{equation}\label{eq:decI}
(I_1)=\displaystyle \frac{1}{8} s^{-N-1}  \int_{L_s} (s^2-l^2) v^\gamma \rho^2 \ dA + \displaystyle\frac{\gamma}{2}s^{-N-1} \int_{L_s}  (s^2-l^2) \delta_{N+1} (v^{\gamma+1}\rho) \ dA.
\end{equation}
Now, by definition of $L_s$ one has $s^2-l^2\leq 2s^2$ in $L_s$, then, thanks to H\"older's inequality, taking into account that $v\leq 1$, $dA=v dx$ and $\rho \in L_{loc}^q(\RN)$, we have
\begin{eqnarray}\label{eq:estftermI}
\displaystyle  \frac{1}{8} s^{-N-1}  \int_{L_s} (s^2-l^2) v^\gamma \rho^2 \ dA &\leq&\displaystyle   \frac{1}{4} s^{-N+1} \left(\int_{L_s} v^{\gamma\frac{q}{q-2}} \ dA\right)^{\frac{q-2}{q}}\left(\int_{L_s} |\rho|^q \ dA\right)^{\frac{2}{q}}\nonumber\\
&\leq&\displaystyle   \frac{1}{4} s^{-N+1+N\frac{q-2}{q}} \left(s^{-N}\int_{L_s} v^{\gamma} \ dA\right)^{\frac{q-2}{q}}\left(\int_{L_s} |\rho|^q \ dA\right)^{\frac{2}{q}}\nonumber \\
&\leq&\displaystyle   \frac{1}{4}|\rho|_{q,K_s}^2 s^{1-\beta} \left(s^{-N}\int_{L_s} v^{\gamma} \ dA\right)^{\frac{q-2}{q}},
\end{eqnarray}
where $\beta:=\frac{2N}{q}$. For the second integral term in $(I_1)$, using the first relation in \eqref{eq:integparts} with $f=s^2-l^2$ (notice that $f=0$ on $\partial L_s$), $g=v^{\gamma+1}\rho$, $H=\rho$, we deduce that

\begin{eqnarray}\label{eq1monform}
 \displaystyle \frac{\gamma}{2}s^{-N-1}\int_{L_s}  (s^2-l^2) \delta_{N+1}(v^{\gamma+1}\rho) \ dA & = & \underbrace{\displaystyle  \frac{\gamma}{2}s^{-N-1} \int_{L_s} (s^2-l^2)  v^\gamma \rho^2 \ dA}_{(I_3)} \nonumber \\ & &  +\underbrace{\displaystyle \gamma s^{-N-1} \int_{L_s} v^{\gamma+1} \rho l \delta_{N+1}l \ dA}_{(I_4)}.  
\end{eqnarray}
We notice that $(I_3)$ is of the same form as the first term in the right-hand side of \eqref{eq:decI} and thus, arguing as in \eqref{eq:estftermI}, we deduce that
\begin{equation}\label{eq:estIA}
{(I_3)}\leq \gamma |\rho|_{q,K_s}^2 s^{1-\beta} \left(s^{-N}\int_{L_s} v^{\gamma} \ dA\right)^{\frac{q-2}{q}}.
\end{equation}
For $(I_4)$, recalling that $\frac{\partial}{\partial x_{N+1}} l =0$, we deduce from the first relation of \eqref{eq:Usefulform} that 
$$ v \delta_{N+1} l =\sum_{i=1}^N \nu_i \frac{\partial}{\partial x_{i}} l, $$
which in particular implies 
\beq\label{eq:218BS}
|v \delta_{N+1} l | \leq \|\bm{\delta} l \|_{\LN}.
\eeq
Then, using \eqref{eq:218BS}, recalling that $l\leq s$ in $L_s$, exploiting \eqref{eq:Usefulform2a} and the elementary inequality 
\beq\label{eq:elementaryineq2}
(a+b)^\alpha\leq  a^\alpha +  b^\alpha\ \ \forall a,b>0, \forall\alpha\in(0, 1],
\eeq 
with $\alpha=1/2$, we obtain
\begin{eqnarray}\label{eq:decIB}
 \displaystyle{(I_4)}&=&\displaystyle \gamma s^{-N-1} \int_{L_s} v^{\gamma} \rho l v\delta_{N+1}l \ dA\nonumber\\
&\leq&\displaystyle  \gamma s^{-N} \int_{L_s} v^{\gamma} |\rho|  \|\bm{\delta} l \|_{\LN} \ dA\nonumber\\
&\leq&\displaystyle  \gamma s^{-N} \int_{L_s} v^{\gamma} |\rho| \ dA +\gamma s^{-N} \int_{L_s} v^{\gamma} |\rho| l^{-1} |(\mathbf{X},\bm{\nu})_{\LN}| \ dA
\end{eqnarray}
For the first integral in the right-hand side of \eqref{eq:decIB}, applying H\"older's inequality and arguing as in \eqref{eq:estftermI} we readily obtain
\begin{equation}\label{eq:estfirstIB}
 \gamma s^{-N} \int_{L_s} v^{\gamma} |\rho| \ dA \leq \gamma s^{-\frac{\beta}{2}}  \left(s^{-N}\int_{L_s} v^{\gamma} \ dA\right)^{\frac{q-1}{q}}|\rho|_{q,K_s}.
\end{equation}
For the second integral in the right-hand side of \eqref{eq:decIB}, using the elementary inequality 
\begin{equation}\label{elemineq}
|\rho| |( \mathbf{X}, \bm{\nu})_{\L^{N+1}}|  \leq \frac{1}{2}l^2\rho^2+\frac{1}{2}l^{-2} ( \mathbf{X}, \bm{\nu})_{\L^{N+1}}^2,
\end{equation}
taking into account that $l\leq s$ in $L_s$ and \eqref{eq:estIA}, we infer that
\begin{eqnarray}\label{eq:estsecIB}
\displaystyle \gamma s^{-N} \int_{L_s} v^{\gamma} |\rho| l^{-1} |(\mathbf{X},\bm{\nu})_{\LN}| \ dA
&\leq&\displaystyle \frac{\gamma}{2} s^{-N} \int_{L_s} v^{\gamma} l |\rho|^2 \ dA +  \frac{\gamma}{2} s^{-N} \int_{L_s} v^{\gamma} l^{-3}(\mathbf{X},\bm{\nu})_{\LN}^2 \ dA\nonumber\\
&\leq&\displaystyle \frac{\gamma}{2} |\rho|_{q,K_s}^2 s^{1-\beta} \left(s^{-N}\int_{L_s} v^{\gamma} \ dA\right)^{\frac{q-2}{q}} \nonumber\\ 
&& +  \frac{\gamma}{2} s^{-N} \int_{L_s} v^{\gamma} l^{-3}(\mathbf{X},\bm{\nu})_{\LN}^2 \ dA.
\end{eqnarray}
This completes the study of the term $(I_1)$.
For  $(I_2)$, using again \eqref{elemineq} and arguing as in \eqref{eq:estsecIB}, we deduce that
\begin{eqnarray}\label{eq:estII}
 s^{-N-1}\int_{L_s}    \rho ( \mathbf{X}, \bm{\nu})_{\L^{N+1}} v^{\gamma}  \ dA
&\leq&\displaystyle \frac{1}{2} |\rho|_{q,K_s}^2 s^{1-\beta} \left(s^{-N}\int_{L_s} v^{\gamma} \ dA\right)^{\frac{q-2}{q}} \nonumber\\
& & +  \frac{1}{2} s^{-N-1} \int_{L_s}  l^{-2}(\mathbf{X},\bm{\nu})_{\LN}^2 v^{\gamma} \ dA,
\end{eqnarray}

Summing up,  from \eqref{monotform}, taking into account \eqref{eq:decI}--\eqref{eq:estIA}, \eqref{eq:decIB}--\eqref{eq:estsecIB}, \eqref{eq:estII} and recalling that $0<\gamma<\frac{1}{N}$ and $N\geq 3$, we have obtained the estimate
\begin{eqnarray}\label{eq2monform}
\displaystyle D_s\left\{s^{-N} \int_{L_s} v^\gamma \ dA\right\} \!\!\!
&\leq&\!\! \displaystyle -C \int_{L_s} \frac{1}{2} s^{-N-1} (s^2-l^2)\left(\sum_{i,j=1}^N u_{ij}^2 + \sum_{j=1}^N\left(\sum_{i=1}^N \nu_i u_{ij}\right)^2\right) v^{\gamma-2} \ dA\nonumber\\
&&\!\!+  \displaystyle \frac{3}{2}|\rho|_{q,K_s}^2 s^{1-\beta}\left(s^{-N}\int_{L_s} v^{\gamma} \ dA\right)^{\frac{q-2}{q}} + \frac{1}{2} |\rho|_{q,K_s} s^{-\frac{\beta}{2}}\left(s^{-N}\int_{L_s} v^{\gamma} \ dA\right)^{\frac{q-1}{q}}\nonumber\\
&& \!\!+  \underbrace{\frac{1}{4} s^{-N} \int_{L_s} l^{-3} ( \mathbf{X}, \bm{\nu})_{\L^{N+1}}^2  v^{\gamma} \ dA}_{(I_5)} +  \displaystyle  \underbrace{\frac{1}{2} s^{-N-1} \int_{L_s}  l^{-2}(\mathbf{X},\bm{\nu})_{\LN}^2 v^{\gamma}\ dA}_{(I_6)}\nonumber\\
&& \!\!\displaystyle-D_s\left\{\int_{L_s} ( \mathbf{X}, \bm{\nu})_{\L^{N+1}}^2 l^{-N-2}v^\gamma \ dA\right\}.
\end{eqnarray}
We now study the terms $(I_5)$ and $(I_6)$. For  $(I_5)$, by elementary algebraic manipulations and using Federer's coarea formula \eqref{eqGF2}, taking into account that $l=s$ on $\partial L_s$, we write
\begin{eqnarray}\label{eq:estIII}
 \displaystyle  \frac{1}{4} s^{-N} \int_{L_s} l^{-3} ( \mathbf{X}, \bm{\nu})_{\L^{N+1}}^2  v^{\gamma} \ dA
 &=& \displaystyle -\frac{1}{4(N-1)}D_s\left[ s^{-N+1} \int_{L_s} l^{-3} ( \mathbf{X}, \bm{\nu})_{\L^{N+1}}^2  v^{\gamma} \ dA\right]\nonumber \\ & & + \frac{1}{4(N-1)}s^{-N+1} D_s\left[ \int_{L_s} l^{-3} ( \mathbf{X}, \bm{\nu})_{\L^{N+1}}^2  v^{\gamma} \ dA\right]\nonumber\\
  &=& \displaystyle -\frac{1}{4(N-1)}D_s\left[ s^{-N+1} \int_{L_s} l^{-3} ( \mathbf{X}, \bm{\nu})_{\L^{N+1}}^2  v^{\gamma} \ dA\right]\nonumber \\ & & + \frac{1}{4(N-1)} \int_{\partial L_s} l^{-N-2} ( \mathbf{X}, \bm{\nu})_{\L^{N+1}}^2  v^{\gamma}\|\bm{\delta} l\|_{\LN}^{-1} \ d\mu\nonumber\\
   &=& \displaystyle -\frac{1}{4(N-1)}D_s\left[ s^{-N+1} \int_{L_s} l^{-3} ( \mathbf{X}, \bm{\nu})_{\L^{N+1}}^2  v^{\gamma} \ dA\right]\nonumber\\ & &  + \frac{1}{4(N-1)}D_s\left[ \int_{ L_s} l^{-N-2} ( \mathbf{X}, \bm{\nu})_{\L^{N+1}}^2  v^{\gamma} \ dA \right].
\end{eqnarray}
Treating $(I_6)$ similarly yields
\begin{eqnarray}\label{eq:estIV}
 \displaystyle \frac{1}{2} s^{-N-1} \int_{L_s}  l^{-2}(\mathbf{X},\bm{\nu})_{\LN}^2 v^{\gamma}\ dA
& = & \displaystyle -\frac{1}{2N}D_s\left[ s^{-N} \int_{L_s} l^{-2} ( \mathbf{X}, \bm{\nu})_{\L^{N+1}}^2  v^{\gamma} \ dA\right] \nonumber\\
& &+ \frac{1}{2N}D_s\left[ \int_{ L_s} l^{-N-2} ( \mathbf{X}, \bm{\nu})_{\L^{N+1}}^2  v^{\gamma} \ dA \right].
\end{eqnarray}
Therefore, from \eqref{eq2monform}--\eqref{eq:estIV}, we readily obtain the estimate
\begin{multline}\label{eq3monform}
\displaystyle D_s\left\{s^{-N} \int_{L_s} v^\gamma \ dA\right\} 
\leq \displaystyle -C \int_{L_s} \frac{1}{2} s^{-N-1} (s^2-l^2)\left(\sum_{i,j=1}^N u_{ij}^2 + \sum_{j=1}^N\left(\sum_{i=1}^N \nu_i u_{ij}\right)^2\right) v^{\gamma-2} \ dA\\
+  \displaystyle \frac{3}{2}|\rho|_{q,K_s}^2 s^{1-\beta}\left(s^{-N}\int_{L_s} v^{\gamma} \ dA\right)^{\frac{q-2}{q}} + \frac{1}{2} |\rho|_{q,K_s} s^{-\frac{\beta}{2}}\left(s^{-N}\int_{L_s} v^{\gamma} \ dA\right)^{\frac{q-1}{q}}\\
- \displaystyle  D_s\left\{\int_{L_s} \left[ \frac{s^{-N}}{2N} l^{-2} + \frac{s^{-N+1}}{4(N-1)} l^{-3}+\left(1-\frac{3N-2}{4N(N-1)}\right)l^{-N-2}\right]( \mathbf{X}, \bm{\nu})_{\L^{N+1}}^2 v^\gamma \ dA\right\}.
\end{multline}
Setting the notations
$$\psi(s):=s^{-N} \int_{L_s} v^\gamma \ dA$$
and
$$F(s):= \int_{L_s} \left[ \frac{s^{-N}}{2N} l^{-2}+ \frac{s^{-N+1}}{4(N-1)} l^{-3}+\left(1-\frac{3N-2}{4N(N-1)}\right)l^{-N-2}\right]( \mathbf{X}, \bm{\nu})_{\L^{N+1}}^2 v^\gamma \ dA, $$
we can rewrite \eqref{eq3monform} as
\begin{multline}\label{eq5monform}
\displaystyle  \psi^\prime(s)  -\frac{3}{2}|\rho|_{q,K_s}^2 s^{1-\beta}\left(\psi(s)\right)^{\frac{q-2}{q}} - \frac{1}{2} |\rho|_{q,K_s} s^{-\frac{\beta}{2}}\left(\psi(s)\right)^{\frac{q-1}{q}}\\
\leq \displaystyle -C \int_{L_s} \frac{1}{2} s^{-N-1} (s^2-l^2)\left(\sum_{i,j=1}^N u_{ij}^2 + \sum_{j=1}^N\left(\sum_{i=1}^N \nu_i u_{ij}\right)^2\right) v^{\gamma-2} \ dA - F^\prime(s).
\end{multline}
Now recall that 
for every $h \in L^1(M)$, we have 
\beq\label{eq7monformstep}
 \int_{L_s} h \ dA= \int_M U(s-l) h \ dA,
\eeq
where $U:\R\to[0,1]$ is the Heaviside (Unit Step) function. 
Moreover, since $v^\gamma$ is continuous at $x_0=0$ and $l$ approximates the geodesic distance in $M$ for $|x|$ small, we have
\beq\label{eq7monformGeo}
\psi(s) \to \omega_N v^{\gamma}(0),\ \ \hbox{as}\ s \to 0^+,
\eeq
see \cite[Section 2]{BS}.
In addition, since $M$ is $C^2$ and strictly spacelike, we have $(\mathbf{X},\bm{\nu})_{\L^{N+1}}=O(|x|^2)$ as $|x|\to 0$. We hence infer that 
\begin{eqnarray}
\label{eq8monformGeo}
 \lim_{s\to 0^+}\int_{L_s} ( \mathbf{X},\bm{\nu})_{\L^{N+1}}^2 l^{-N-2}\ dA =0, \nonumber\\ 
 \lim_{s\to 0^+}\int_{L_s} ( \mathbf{X},\bm{\nu})_{\L^{N+1}}^2 s^{-N}l^{-2}\ dA = 0, \nonumber\\  
 \lim_{s\to 0^+}\int_{L_s} ( \mathbf{X},\bm{\nu})_{\L^{N+1}}^2 s^{-N+1}l^{-3}\ dA =0.
 \end{eqnarray}
We now integrate \eqref{eq5monform} from $0$ to $t\in (0,R]$. Exploiting \eqref{eq7monformstep}--\eqref{eq8monformGeo}, Fubini's theorem and observing that $|\rho|_{q,K_s}\leq |\rho|_{q,K_R}$, for any $s\in(0,R]$, we obtain
\begin{multline}\label{eq8monform}
\displaystyle\psi(t)- \omega_Nv^\gamma(0) -  \frac{3}{2}|\rho|_{q,K_R}^2 \int_0^t  s^{1-\beta} \psi^{\frac{q-2}{q}}(s) \ ds -  \frac{1}{2}|\rho|_{q,K_R} \int_0^t  s^{-\frac{\beta}{2}} \psi^{\frac{q-1}{q}}(s) \ ds \\
 \leq\displaystyle -C  \int_{L_t} S_t(l) \left(\sum_{i,j=1}^N u_{ij}^2 + \sum_{j=1}^N\left(\sum_{i=1}^N \nu_i u_{ij}\right)^2  \right)v^{\gamma-2}\,  dA - F(t),
\end{multline}
where
$$S_t(l):=\int_0^t  \frac{1}{2} s^{-N-1} (s^2-l^2) U(s-l) ds.$$
A direct computation shows that
\begin{equation}\label{eq:explcompSRl}
S_t(l)=\frac{1}{N(N-2)} l^{2-N}+\frac{1}{2N}l^2t^{-N}-\frac{1}{2(N-2)}t^{2-N}>0
\end{equation}
for $0<l<t$. In addition, since $N\geq 3$, we see that $F(t)\geq 0$ for all $t\in(0,R]$. From these facts we deduce that the right-hand side of \eqref{eq8monform} is non-positive, and taking into account that $v\leq1$, we get the estimate
\begin{equation}\label{eq12monform}
\psi(t)\leq \omega_N +   \frac{3}{2}|\rho|_{q,K_R}^2 \int_0^t  s^{1-\beta} \psi^{\frac{q-2}{q}}(s) \ ds +  \frac{1}{2}|\rho|_{q,K_R} \int_0^t  s^{-\frac{\beta}{2}} \psi^{\frac{q-1}{q}}(s) \ ds \ \ \forall t \in (0,R].
\end{equation}
Moreover, in view of \eqref{eq7monformGeo}, $\psi$ has a continuous extension on $[0,R]$ by setting $\psi(0):=\omega_N v^\gamma(0)$ and thus the inequality
\eqref{eq12monform} holds for all $t\in[0,R]$. Therefore, applying Lemma \ref{lemtech:hyp} with $T=R$, $C_0=\omega_N$, $C_1= \frac{3}{2}|\rho|_{q,K_R}^2$, $C_2= \frac{1}{2}|\rho|_{q,K_R} $, we conclude that
\begin{equation}\label{eq12amonform}
\psi(t) \leq  \left(\omega_N^{\frac{1}{q}} + \frac{3\ \omega_N^{-\frac{1}{q}} |\rho|_{q,K_R}^2}{2q(2-\beta)}\ t^{2-\beta} +\frac{|\rho|_{q,K_R}}{2q(1-\frac{\beta}{2})}\ t^{1-\frac{\beta}{2}} \right)^q\ \ \forall t\in [0,R].
\end{equation}
Now, observe that it easily follows from \eqref{eq:explcompSRl} that  for $0<l<R/2$,
 $$S_R(l)>c(N)R^{2-N},$$ 
 where $c(N)>0$ depends on $N$ only. Then, from \eqref{eq8monform} and \eqref{eq12amonform}, recalling that $F(R)\geq 0$, $\nu_i=\frac{u_i}{v}$ and $dA=v dx$, we get
\begin{eqnarray}\label{eq14monform}
\displaystyle  \omega_Nv^\gamma(0)&\geq & \displaystyle R^{-N} \int_{K_{R}} v^{\gamma+1} \ dx + c(N)R^{2-N}\int_{K_{R/2}}v^{\gamma-1}\left[\sum_{i,j=1}^N u_{ij}^2 + v^{-2}\sum_{j=1}^N\left(\sum_{i=1}^N u_{ij}u_i\right)^2\right] \ dx\nonumber\\
 &&\displaystyle - \frac{3}{2}|\rho|_{q,K_R}^2 \int_0^R  s^{1-\beta}  \left(\omega_N^{\frac{1}{q}} + \frac{3\ \omega_N^{-\frac{1}{q}} |\rho|_{q,K_R}^2}{2q(2-\beta)}\ s^{2-\beta} +\frac{|\rho|_{q,K_R}}{2q(1-\frac{\beta}{2})}\ s^{1-\frac{\beta}{2}} \right)^{q-2} ds\nonumber \\
 &&\displaystyle - \frac{1}{2}|\rho|_{q,K_R} \int_0^R  s^{-\frac{\beta}{2}}  \left(\omega_N^{\frac{1}{q}} + \frac{3\ \omega_N^{-\frac{1}{q}} |\rho|_{q,K_R}^2}{2q(2-\beta)}\ s^{2-\beta} +\frac{|\rho|_{q,K_R}}{2q(1-\frac{\beta}{2})}\ s^{1-\frac{\beta}{2}} \right)^{q-1} ds.
\end{eqnarray}
Up to the translation argument pointed out at the beginning of the proof and noticing that
\beq\label{eq:finalmainteo}
v^{-2}\sum_{j=1}^N\left(\sum_{i=1}^N u_{ij}u_i\right)^2=\frac{v^{-2}}{4}\sum_{j=1}^N\left(\frac{\partial}{\partial x_j}|\nabla u|^2\right)^2=\frac{1}{4} \sum_{j=1}^N \left[\frac{\frac{\partial}{\partial x_j}\left(-|\nabla u|^2\right)}{\sqrt{1-|\nabla u|^2}} \right]^2=|\nabla v|^2,
\eeq
we finaly deduce \eqref{eqTesiProp2} from \eqref{eq14monform}.
\end{proof}

\section{Haarala's gradient estimate}

In this section, we first give a detailed version of Haarala's proof of \cite[Theorem 3.5]{Haa}, which is reminiscent of  \cite[Theorem 3.5]{BS}, and whose key final ingredient is a Moser iteration technique. Then, under further assumptions, we provide an estimate in Proposition \ref{prop:Lqestnu} on the $L^q$ norm of the function $\nu$ defined by
\begin{equation}\label{eq:nu}
\nu(x):= \frac{1}{\sqrt{1-|\nabla u(x)|^2}}\ \ \ x \in \RN,
\end{equation}
where $u \in C^3(\RN)$ is a given strictly spacelike function. As seen in Section 2, the quantity $\nu$ has a geometrical interpretation as $\nu=\nu_{N+1}=v^{-1}$ is the $(N+1)$-th component in $\L^{N+1}$ of the Gauss map $\bm{\nu}$ associated to $M$, see \eqref{eq:Gausscomponents}, and it plays a crucial role to deduce  gradient estimates for \eqref{eq:BI2} (see \cite{BS, BIA2,GH}). In terms of $\nu$ we have the following.

\begin{theorem}[Haarala \cite{Haa}]\label{teo:maingradestimate2}
Assume $N\geq 3$, $q>N$ and $\rho \in C^1(\RN)$. Let $u\in C^3(\RN)$ be a strictly spacelike classical solution to \eqref{eq:BI2} and let $\nu$ be the function defined by \eqref{eq:nu}. 
There exists  a positive constant $c$ depending only on $N$ and $q$ such that  for any $x_0\in \RN$, $R>0$ it holds,
$$\sup_{B_{R/2}(x_0)} \nu \leq c  \left[\left( \Mint_{B_R(x_0)} \nu^q  dx\right)^{\frac{N}{q(q-N)}} + R^{\frac{N}{q-N}} \left( \Mint_{B_R(x_0)} |\rho|^q  dx\right)^{\frac{N}{q(q-N)}} \right] \left( \Mint_{B_R(x_0)} \nu^q  dx\right)^{\frac{1}{q}}.$$
\end{theorem}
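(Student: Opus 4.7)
\emph{Proof proposal.} The plan is to derive an elliptic inequality for powers of $\nu$ and then run a Moser iteration in the spirit of \cite[Theorem~3.5]{BS}. The core ingredients are a Simons-type identity for spacelike graphs in $\LN$, a Caccioppoli estimate, and a geometric-series bookkeeping that converts an $L^q$ bound into an $L^\infty$ bound.

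\emph{Step 1 (differential inequality for $\nu$).} Using the Simons-type identity for spacelike graphs in $\LN$ together with the mean curvature equation \eqref{eq:BI2}, a direct computation starting from the formulas \eqref{normSecForm}--\eqref{eq:UsefulformBis} yields
\[
\Delta_M \nu \ge \nu\,\|{\bf II}\|^2 - c_1 \nu^3|\rho|^2 - c_2\nu^{2}|\nabla\rho|,
\]
where $\Delta_M$ is the Laplace--Beltrami operator on $M=\mathrm{graph}(u)$. Translating this back to Euclidean coordinates via $dA=v\,dx$ and $\delta_i=g^{ij}\partial_j$, $\nu$ is a subsolution of a uniformly elliptic equation in non-divergence form with coefficients $g^{ij}=\delta_{ij}+\nu_i\nu_j$ controlled in the weighted form $v^{-1}\,dx$.

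\emph{Step 2 (Caccioppoli inequality).} Fix $p\ge q/2$ and a standard cut-off $\zeta\in C_c^1(B_R(x_0))$. Testing the previous inequality against $\zeta^2\nu^{2p-1}$ and integrating by parts on $M$, with the additional step of integrating once more to shift the derivative of $\rho$ onto $\zeta^2\nu^{2p-1}$ so as to avoid using $\nabla\rho$, one obtains
\[
\int_{\RN}\zeta^2 v^{-1}|\nabla \nu^p|^2\,dx \le c\,p^{2}\int_{\RN}\bigl(|\nabla\zeta|^2 + \zeta^2|\rho|^2\nu^{2}\bigr)\nu^{2p}v^{-1}\,dx.
\]
Applying the Euclidean Sobolev inequality to $w:=\zeta\nu^p$, using $v\le 1$, and treating the $|\rho|^2$ term via H\"older with exponents $(q/2,\,q/(q-2))$ (valid since $q>N\ge 3$), one derives a reverse H\"older-type inequality
\[
\Bigl(\Mint_{B_r(x_0)}\nu^{p\chi}\,dx\Bigr)^{1/\chi}\le\frac{c\,p^{2}}{(r'-r)^{2}}\bigl(1+R^{2-2N/q}|\rho|_{q,B_R(x_0)}^{2}\bigr)\Mint_{B_{r'}(x_0)}\nu^{p}\,dx,
\]
for all $R/2\le r<r'\le R$, with $\chi:=q/(q-2)>1$.

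\emph{Step 3 (Moser iteration and conclusion).} Iterating the previous inequality along a dyadic sequence of nested balls $B_{r_k}(x_0)$, $r_k=R/2+R/2^{k+1}$, and exponents $p_k=(q/2)\chi^{k}$, all the constants accumulate into a convergent geometric series because $\chi>1$. Summing yields
\[
\sup_{B_{R/2}(x_0)}\nu \le c\,\bigl(1+R^{2-2N/q}|\rho|_{q,B_R(x_0)}^{2}\bigr)^{\frac{N}{q(q-N)}}\Bigl(\Mint_{B_R(x_0)}\nu^{q}\,dx\Bigr)^{1/q},
\]
the exponent $N/(q(q-N))$ arising from $\sum_k p_k^{-1}=(2/q)\sum_k\chi^{-k}$. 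Splitting the bracket as $1+(\cdots)$ and using the identity $|\rho|_{q,B_R(x_0)}=R^{N/q}\bigl(\Mint_{B_R(x_0)}|\rho|^q\bigr)^{1/q}$ rewrites the above in the exact form of the statement, including the scaling factor $R^{N/(q-N)}$.

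\emph{Main obstacle.} The most delicate point is Step~1: obtaining a sub-elliptic inequality for $\nu$ with no uncontrolled $\nabla\rho$ contribution. Haarala's idea is precisely to defer the $\nabla\rho$ term to Step~2 and trade it against $\rho\,\nabla\nu^{2p-1}\zeta^2$ through an additional integration by parts, the resulting $\rho\,\nabla\nu$ term then being absorbed by the good Dirichlet term $v^{-1}|\nabla\nu^p|^2$ via Young's inequality. The secondary technical difficulty is the careful bookkeeping of constants and exponents through the Moser iteration to recover exactly the factor $R^{N/(q-N)}$ and the power $N/(q(q-N))$ of the statement, rather than weaker, non-sharp versions.
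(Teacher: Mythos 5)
Your skeleton matches the paper's: differentiate the equation (or, equivalently, start from a Simons-type identity), test against $\eta^2\nu^{p-1}$-type functions, kill the dangerous $\nabla\rho$ contribution by an extra integration by parts against the equation, absorb the resulting $\rho\,\nabla\nu$ term into the Dirichlet energy by Young, then Sobolev plus H\"older and Moser iteration. You also correctly identify the main obstacle (the $\nabla\rho$ term) and the trick that removes it. However, your Moser iteration bookkeeping contains a genuine error that prevents you from reaching the stated exponents.

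The iteration gain per step is \emph{not} $\chi=\frac{q}{q-2}$. Sobolev raises the power on $\nu$ by the factor $\frac{2^*}{2}=\frac{N}{N-2}$, and then the H\"older step needed to factor out $|\rho|^2$ (with exponents $\frac{q}{2},\frac{q}{q-2}$) \emph{lowers} it by the factor $\frac{q}{q-2}$ on the other side. The net gain per step is therefore
$$\alpha:=\frac{N/(N-2)}{q/(q-2)}=\frac{N(q-2)}{(N-2)q}>1 \ \text{iff}\ q>N,$$
and the iterates satisfy $p_{k+1}-2=\alpha(p_k-2)$, i.e.\ $p_k=\alpha^k(q-2)+2$ (this is the paper's choice, with $p_0=q$). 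With your choice $\chi=\frac{q}{q-2}$ and $p_k=(q/2)\chi^k$, a direct computation gives $\sum_k p_k^{-1}=\frac{2}{q}\cdot\frac{\chi}{\chi-1}=1$, not $\frac{N}{q(q-N)}$. The accumulated exponent that actually produces the statement is
$$\frac{2}{q-2}\sum_{j\ge 0}\alpha^{-j}=\frac{N}{q-N},$$
which, applied to the square root of the constant $g\sim\left(\Mint\nu^q\right)^{2/q}+R^2\left(\Mint|\rho|^q\right)^{2/q}$, yields both the power $\frac{N}{q(q-N)}$ on the averaged $L^q$ norms and the prefactor $R^{N/(q-N)}$. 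With your bookkeeping, the exponent on $R$ would come out as $\frac{2N}{q(q-N)}$, which does not match the theorem. You should therefore replace $\chi$ by $\alpha$ throughout Step~3, track $p_k-2$ rather than $p_k$, and keep both geometric series $\sum j\alpha^{-j}$ and $\sum\alpha^{-j}$ when collecting constants (as in the paper's \eqref{eq16:th35}--\eqref{eq17:th35}).

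Two smaller remarks. First, the paper stays entirely in Euclidean divergence form: it differentiates $-\operatorname{div}\left(D^2F(\nabla u)\nabla u_i\right)=\rho_i$, sums over $i$ with test function $\varphi u_i$, and uses the explicit algebraic identity
$$\varphi(\nabla u,\nabla\rho)_{\RN}=\nu\,(\nabla u,\nabla(\rho\varphi\nu^{-1}))_{\RN}-(\nabla u,\nabla\varphi)_{\RN}\rho+\nu^{-1}(\nabla u,\nabla\nu)_{\RN}\varphi\rho$$
together with the equation to eliminate $\nabla\rho$; your route through $\Delta_M\nu$ and $\|{\bf II}\|^2$ is geometrically equivalent but makes the subsequent integration by parts and the $A$-Cauchy--Schwarz estimates less transparent. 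Second, in your Caccioppoli step the test power $\zeta^2\nu^{2p-1}$ carries an awkward $v^{-1}=\nu$ factor around; the paper's choice $\varphi=\eta^2\nu^{p-1}$ and the matrix $A=\nu^{-3}D^2F(\nabla u)$, with the clean bounds $\nu^{-2}|\xi|^2\le(A\xi,\xi)_{\RN}\le|\xi|^2$ and $A\nabla u=\nabla u$, keeps the exponents aligned so that Sobolev applies cleanly to $\eta\nu^{(p-2)/2}$.
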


\begin{proof}[Proof of Theorem \ref{teo:maingradestimate2}]
Let $F\in C^0\left(\overline{B_1(0)}\right)\cap C^\infty(B_1(0))$ be defined by $F(y):=1-\sqrt{1-|y|^2}$, $y\in \overline{B_1(0)}$. By direct computation we easily check that
\beq\label{eq:FgradHess}
  \nabla F(y)=\frac{y}{\sqrt{1-|y|^2}}, \ \  D^2F(y)= \frac{1}{{\sqrt{1-|y|^2}}} \mathbb{I}_N + \frac{1}{{\sqrt{(1-|y|^2)^3}}} y\otimes y,
  \eeq
for any $y=(y_1,\ldots, y_N)\in B_1(0)$, where $\mathbb{I}_N$ is the identity matrix in $\RN$, $y\otimes y$ is the symmetric matrix whose $(i,j)$ entry is given by $y_iy_j$, for $i,j=1,\ldots,N$.
In particular since $u$ is strictly spacelike, and recalling that $\nu$ is given by \eqref{eq:nu}, we have
\beq\label{eq0:unifboundD2F}
D^2F(\nabla u)= \nu \mathbb{I}_N+ \nu^3 \nabla u\otimes\nabla u.
\eeq
One then easily checks that $D^2F(\nabla u)$ is positive definite for any $x\in \RN$ as
\beq\label{eq:unifboundD2F}
\nu |\xi|^2\leq  (D^2F(\nabla u) \xi, \xi)_{\RN} \leq \nu^3 |\xi|^2 \ \ \ \forall \xi \in \RN.
\eeq
Now, since $u$ is of class $C^3$, strictly spacelike and satisfies classically \eqref{eq:BI2}, we can differentiate  \eqref{eq:BI2} with respect to the variable $x_i$. Taking into account \eqref{eq:FgradHess}, we easily check that
\begin{equation}\label{eq:BIdiff}
-\operatorname{div}\left(D^2F (\nabla u) \nabla u_i\right)=\rho_i \ \ \hbox{in $\RN$},
\end{equation}
in the classical sense, for any $i=1,\ldots,N$. In particular, we have
 \beq\label{eq:weakformD2F}
\int_{\RN} (D^2F(\nabla u) \nabla u_i, \nabla \psi)_{\RN} \ dx=  \int_{\RN} \rho_i \psi \ dx \ \ \ \forall \psi \in C^1_c(\RN).
\eeq
Taking $\psi=\varphi u_i$ as test function in \eqref{eq:weakformD2F}, where $\varphi \in C^1_c(\RN)$ is such that $\varphi\geq 0$, and summing over all indices $i=1,\ldots, N$, we infer that
\beq\label{eq2:th35}
\sum_{i=1}^N \int_{\RN} (D^2F(\nabla u) \nabla u_i, \nabla \varphi)_{\RN} u_i\ dx =  \int_{\RN}  \varphi (\nabla u,\nabla \rho)_{\RN} \ dx - \sum_{i=1}^N \int_{\RN} (D^2F(\nabla u) \nabla u_i, \nabla u_i)_{\RN} \varphi\ dx.
\eeq
Now, thanks to \eqref{eq:unifboundD2F} and since $\varphi\geq 0$, we deduce that 
$$(D^2F(\nabla u) \nabla u_i, \nabla u_i)_{\RN} \varphi \geq 0,$$ 
for any $i=1,\ldots,N$, and therefore \eqref{eq2:th35} leads to the inequality
\beq\label{eq3:th35}
\sum_{i=1}^N \int_{\RN} (D^2F(\nabla u) \nabla u_i, \nabla \varphi)_{\RN} u_i\ dx \leq  \int_{\RN}  \varphi (\nabla u,\nabla \rho)_{\RN} \ dx.
\eeq
Observe that 
$$\varphi (\nabla u,\nabla \rho)_{\RN}=\nu (\nabla u,\nabla (\rho\varphi \nu^{-1}))_{\RN}-(\nabla u,\nabla \varphi)_{\RN}\rho + \nu^{-1} (\nabla u,\nabla \nu)_{\RN}\varphi\rho.$$
Integrating by parts and taking into account \eqref{eq:BI2}, we obtain
\beq\label{eq3bis:th35}
\int_{\RN}  \varphi (\nabla u,\nabla \rho)_{\RN} \ dx= \int_{\RN}  \nu^{-1} \varphi \rho^2 \ dx  -  \int_{\RN}   (\nabla u,\nabla \varphi)_{\RN} \rho\ dx +  \int_{\RN}  \nu^{-1} (\nabla u,\nabla \nu)_{\RN} \varphi \rho\ dx.
\eeq
Let $A:=\nu^{-3} D^2F(\nabla u)$. Since $\nabla \nu=\nu^3 \sum_{i=1}^N u_i \nabla u_i$, it follows from \eqref{eq3:th35} and \eqref{eq3bis:th35} that
\beq\label{eq4:th35}
\int_{\RN} (A \nabla \nu, \nabla \varphi)_{\RN} \ dx \leq  \int_{\RN}  \nu^{-1} \varphi \rho^2 \ dx  -  \int_{\RN}   (\nabla u,\nabla \varphi)_{\RN} \rho\ dx +  \int_{\RN}  \nu^{-1} (\nabla u,\nabla \nu)_{\RN} \varphi \rho\ dx,
\eeq
 for any $\varphi \in C^1_c(\RN)$ such that $\varphi\geq 0$. Let $p>N$, $\eta\in C^1_c(\RN)$ and plug $\varphi=\eta^2 \nu^{p-1}$ in \eqref{eq4:th35}. As 
 $$\nabla \varphi= 2\eta \nu^{p-1} \nabla \eta + (p-1) \eta^2 \nu^{p-2}\nabla \nu,$$ 
this yields 
\begin{eqnarray}
 \label{eq5:th35}
\displaystyle (p-1)\!\! \int_{\RN} \!\!\eta^2 \nu^{p-2}(A \nabla \nu, \nabla \nu)_{\RN} \ dx \!\!\!&\leq&\!\!\!\!\!\displaystyle  \int_{\RN} \eta^2  \nu^{p-2} \rho^2 \ dx - (p-2) \int_{\RN} \eta^2 \nu^{p-2}  (\nabla u,\nabla \nu)_{\RN} \rho\ dx \nonumber\\
  &&\!\!\!\!\! - 2 \int_{\RN} \eta \nu^{p-1}  (\nabla u,\nabla \eta)_{\RN} \rho\ dx - 2 \int_{\RN} \eta \nu^{p-1}  (A\nabla \nu,\nabla \eta)_{\RN}\ dx. 
\end{eqnarray}
By definition, we have $A(x)= (1-|\nabla u(x)|^2)\mathbb{I}_N+ \nabla u(x)\otimes\nabla u(x)$, $x\in\RN$, 
and therefore $A\nabla u= \nabla u$. 
Moreover, in view of \eqref{eq:unifboundD2F}, for any $x\in\RN$, it holds
\beq\label{eq6bis:th35}
\nu(x)^{-2} |\xi|^2\leq  (A(x) \xi, \xi)_{\RN} \leq  |\xi|^2 \ \ \ \forall \xi \in \RN.
\eeq
In particular, for any $x\in \RN$ the map $(\xi,\zeta)\mapsto (A(x)\xi,\zeta)_{\RN}$ is a inner product on $\RN$, and by the Cauchy-Schwarz inequality we have
\beq\label{eq7:th35}
|(A \xi, \zeta)_{\RN}| \leq \sqrt{(A \xi, \xi)_{\RN}}  \sqrt{(A \zeta, \zeta)_{\RN}} \ \ \ \forall \xi,\zeta \in \RN.
\eeq
Replacing $\nabla u$ by $A\nabla u$, we infer from \eqref{eq7:th35} that
$$|(\nabla u, \nabla \nu)_{\RN}|=|(A\nabla u, \nabla \nu)_{\RN}|\leq \sqrt{(A\nabla u, \nabla u)_{\RN}} \sqrt{(A\nabla \nu, \nabla \nu)_{\RN}}= |\nabla u|  \sqrt{(A\nabla \nu, \nabla \nu)_{\RN}}.$$
Using the same trick to estimate $|(\nabla u, \nabla \eta)_{\RN}|$, exploiting the fact that $|\nabla u|\leq1$ and using \eqref{eq7:th35} to bound $|(A\nabla \nu,\nabla \eta)_{\RN}|$, we deduce from \eqref{eq5:th35} that
\begin{eqnarray}
\label{eq8:th35}
\displaystyle (p-1) \int_{\RN} \eta^2 \nu^{p-2}(A \nabla \nu, \nabla \nu)_{\RN} \ dx &\leq&\displaystyle  \int_{\RN} \eta^2  \nu^{p-2} \rho^2 \ dx \nonumber\\
  &&\displaystyle + (p-2) \int_{\RN} \eta^2 \nu^{p-2}  |\rho|  \sqrt{(A\nabla \nu, \nabla \nu)_{\RN}}  \ dx \nonumber\\
    &&\displaystyle + 2 \int_{\RN} |\eta| \nu^{p-1}|\rho|  \sqrt{(A\nabla \eta, \nabla \eta)_{\RN}}  dx\nonumber\\
     &&\displaystyle + 2 \int_{\RN} |\eta| \nu^{p-1}   \sqrt{(A\nabla \nu, \nabla \nu)_{\RN}}  \sqrt{(A\nabla \eta, \nabla \eta)_{\RN}}\ dx. 
\end{eqnarray}
We now use Young inequality \eqref{eq:elementinterpolineq} to estimate the last three terms in \eqref{eq8:th35}.
For the reader's convenience, and since it will be crucial to keep track of the dependence on the parameter $p$ in the constants, we give the full details of the estimates. First, applying  \eqref{eq:elementinterpolineq} with $\e=1/(p-1)$ we get 
$$
\displaystyle (p-2) \eta^2\nu^{p-2}  |\rho|  \sqrt{(A\nabla \nu, \nabla \nu)_{\RN}}
\leq \displaystyle \frac{(p-2)^2}{(p-1)}\eta^2 \nu^{p-2}\rho^2 +\frac{p-1}{4}\eta^2  \nu^{p-2} (A\nabla \nu, \nabla \nu)_{\RN}, $$
while 
\begin{eqnarray*}
2\displaystyle  |\eta| \nu^{p-1}  |\rho|  \sqrt{(A\nabla \eta, \nabla \eta)_{\RN}}&=&\displaystyle   \left(\sqrt{p-1}|\eta| \nu^{\frac{p}{2}-1}|\rho|\right) \left(\frac{2}{\sqrt{p-1}} \nu^{\frac{p}{2}} \sqrt{(A\nabla \eta, \nabla \eta)_{\RN}}\right)\\
&\leq& \displaystyle (p-1)\eta^2 \nu^{p-2}\rho^2 +\frac{1}{p-1}  \nu^{p} (A\nabla \eta, \nabla \eta)_{\RN}.
\end{eqnarray*}
Similarly, we infer that
$$ 2 \nu^{\frac{p}2} \sqrt{(A\nabla \eta, \nabla \eta)_{\RN}}  |\eta| \nu^{\frac{p}2-1} \sqrt{(A\nabla \nu, \nabla \nu)_{\RN}}  \leq   \frac{4}{p-1} \nu^{p} (A\nabla \eta, \nabla \eta)_{\RN}  +\frac{p-1}{4} \eta^2 \nu^{p-2} (A\nabla \nu, \nabla \nu)_{\RN}.$$
Summing up, we deduce from \eqref{eq8:th35} that
\begin{eqnarray}
\label{eq9:th35}
\displaystyle \frac{(p-1)}{2} \int_{\RN} \eta^2 \nu^{p-2}(A \nabla \nu, \nabla \nu)_{\RN} \ dx &\leq&\displaystyle  \frac{5}{p-1}\int_{\RN}  \nu^{p} (A\nabla \eta, \nabla \eta)_{\RN} \ dx \nonumber\\
  &&\displaystyle + \left[\frac{(p-2)^2+(p-1)^2}{p-1}+1\right] \int_{\RN} \eta^2 \nu^{p-2}  \rho^2 \ dx.
  \end{eqnarray}

From \eqref{eq9:th35} and since $p>N$, $N\geq 3$, then by elementary algebraic considerations we get
 \beq\label{eq10:th35}
\displaystyle \int_{\RN} \eta^2 \nu^{p-2}(A \nabla \nu, \nabla \nu)_{\RN} \ dx \leq\displaystyle  \frac{10}{(p-1)^2}\int_{\RN}  \nu^{p} (A\nabla \eta, \nabla \eta)_{\RN} \ dx  + 5 \int_{\RN} \eta^2 \nu^{p-2}  \rho^2 \ dx.
\eeq
Now, applying Sobolev's inequality to $\phi:=\eta \nu^{\frac{p-2}{2}} \in C^1_c(\RN)$, taking into account that 
$$
\displaystyle\left|\nabla (\eta \nu^{\frac{p-2}{2}})\right|^2 
\leq\displaystyle \frac{1}{2} (p-2)^2 \eta^2 \nu^{p-4} |\nabla \nu|^2 + 2\nu^{p-2} |\nabla \eta|^2,
$$
and exploiting \eqref{eq6bis:th35} we deduce that
\beq\label{eq11:th35}
\left|\eta \nu^{\frac{p-2}{2}}\right|_{2^*}^2 
\leq  \frac{c_0^2(p-2)^2}{2}  \int_{\RN} \eta^2 \nu^{p-2} (A\nabla \nu, \nabla \nu)_{\RN} \ dx +  2c_0^2  \int_{\RN} \nu^{p-2} |\nabla \eta|^2 \ dx,
\eeq
where $c_0$ is the Sobolev constant for the embedding of $\D\hookrightarrow L^{2^*}(\RN)$, in particular $c_0$ depends only on $N$. Finally, combining \eqref{eq11:th35} with \eqref{eq10:th35}, exploiting again \eqref{eq6bis:th35} and taking into account that $\nu\geq 1$, we infer that
\beq\label{eq12:th35}
\left|\eta \nu^{\frac{p-2}{2}}\right|_{2^*}^2 
\leq 7 c_0^2  \int_{\RN} \nu^{p} |\nabla\eta|^2 \ dx +  3c_0^2 (p-2)^2  \int_{\RN} \eta^2\nu^{p-2} \rho^2 \ dx.
\eeq
Take $k \in \N$. Set $R_k:=(\frac{1}{2}+\frac{1}{2^{k+1}})R$, and $B_k:=B_{R_k}(x_0)$. Clearly, by definition, we have $B_0=B_R(x_0)$, $B_{k+1}\subset B_k$ and we easily check that $\frac{R_k}{R_{k+1}}=\frac{2^{k+1}+2}{2^{k+1}+1}$, $\frac{m_N(B_k)}{m_N(B_{k+1})}\leq C$, for some positive constant depending only on $N$, where $m_N(\cdot)$ denotes the Lebesgue measure in $\RN$. Let $\eta_k \in C^1_c(\RN)$ be such that $0\leq \eta_k\leq 1$, $\eta_k\equiv 1$ in $B_{k+1}$, $\supp(\eta_k)\subset B_k$ ($\supp(\eta_k)$ is the support of $\eta_k$) 
and 
\beq\label{eq:propcutoffetam}
|\nabla \eta_k|\leq \frac{2}{R_k-R_{k+1}}=2^{k+3}R^{-1}.
\eeq
Taking $\eta=\eta_k$ in \eqref{eq12:th35}, the estimate \eqref{eq:propcutoffetam} leads to
\beq\label{eq13:th35}
\left(\Mint_{B_{k+1}} \nu^{(p-2)\frac{N}{N-2}} \ dx\right)^{\frac{N-2}{N}} \leq 4^{k+3}c_1^2  \Mint_{B_k} \nu^{p}  \ dx +  c_1^2 (p-2)^2 R^2 \Mint_{B_k} \nu^{p-2} \rho^2 \ dx.
\eeq
where $c_1$ is a positive constant depending only on $N$. 

Now, fix once for all a number $q>N$ and let $p\geq q$. Using H\"older's inequality to estimate the integrals in the right-hand side of \eqref{eq13:th35} we get that
\begin{equation}\label{eq14:th35}
\begin{array}{lll}
 \displaystyle \Mint_{B_k} \nu^{p} \ dx &\leq& \displaystyle \left( \Mint_{B_k} \nu^{q} \ dx \right)^{\frac{2}{q}} \left( \Mint_{B_k} \nu^{(p-2)\frac{q}{q-2}} \ dx \right)^{\frac{q-2}{q}}\\[12pt]
 \displaystyle \Mint_{B_k} \nu^{p-2}\rho^2 \ dx &\leq&\displaystyle  \left( \Mint_{B_k} |\rho|^{q} \ dx \right)^{\frac{2}{q}} \left( \Mint_{B_k} \nu^{(p-2)\frac{q}{q-2}} \ dx \right)^{\frac{q-2}{q}}.
  \end{array}
  \end{equation} 
 Then from \eqref{eq13:th35}, \eqref{eq14:th35}, and observing that $\frac{|B_R(x_0)|}{|B_k|}\leq \frac{|B_R(x_0)|}{|B_{R/2}(x_0)|}\leq 2^N$ for all $k\in\N$ (which is used to bound the first integral means in the right-hand sides of \eqref{eq14:th35} with the same integral means on the whole $B_R(x_0)$), then, after elementary computations (taking into account that $4^k\geq 1$, $(p-2)^2\geq 1$), we deduce that
 \beq\label{eq15:th35}
\left(\Mint_{B_{k+1}} \nu^{(p-2)\frac{N}{N-2}} \ dx\right)^{\frac{N-2}{N(p-2)}} \leq 4^{\frac{k}{p-2}}g^{\frac{1}{p-2}}(p-2)^{\frac{2}{p-2}} \left( \Mint_{B_k} \nu^{(p-2)\frac{q}{q-2}} \ dx \right)^{\frac{(q-2)}{(p-2)q}},
\eeq
where we have set $$g:= 4^3 2^{\frac{2N}{q}}c_1^2 \left( \Mint_{B_R(x_0)} \nu^{q} \ dx \right)^{\frac{2}{q}}+c_1^22^{\frac{2N}{q}} R^2\left( \Mint_{B_R(x_0)} |\rho|^{q} \ dx \right)^{\frac{2}{q}}.$$
In particular notice that $g$ is independent of $p$ and $k$.
Now, setting $\alpha:=\frac{N (q-2)}{(N-2)q}$ we define
$$p_k:=\alpha^k (q-2)+2, \ \ \Phi_k:=\left( \Mint_{B_k} \nu^{\alpha^k q} \ dx \right)^{\frac{1}{\alpha^k q}}, \ \ k\in \N.$$ Clearly $p_0=q$, in addition, since $q>N$, we check that $\alpha>1$ and thus $p_k\geq q$ for any $k \in \N$, $p_k\to +\infty$, as $k\to+\infty$. Taking $p=p_k$ in \eqref{eq15:th35} then by construction, and after easy algebraic manipulations, we infer that for any $k\in \N$ 
$$\Phi_{k+1} \leq 2^{\frac{2}{q-2} k \alpha^{-k}} \alpha^{\frac{2}{q-2}k\alpha^{-k}} \left[(q-2) \sqrt{g}\right]^{\frac{2}{q-2} \alpha^{-k}} \Phi_k.$$
Then, arguing by induction and passing to the limit as $k\to+\infty$ we deduce that
 \beq\label{eq16:th35}
\lim_{k\to +\infty} \Phi_{k} \leq {(2\alpha)}^{\frac{2}{q-2} \sum_{j=1}^\infty j \alpha^{-j}} \left[(q-2) \sqrt{g}\right]^{\frac{2}{q-2}  \sum_{j=0}^\infty \alpha^{-j}} \Phi_0. 
\eeq
Now, since $\alpha=\frac{N (q-2)}{(N-2)q}>1$ the two series in \eqref{eq16:th35} converge, and by direct computation we have
 \beq\label{eq17:th35}
\frac{2}{q-2} \sum_{j=0}^\infty \alpha^{-j}= \frac{N}{q-N}, \ \ \frac{2}{q-2} \sum_{j=1}^\infty j \alpha^{-j} = \frac{qN(N-2)}{2(q-N)^2}.
\eeq
Finally, as $\lim_{k\to +\infty} \Phi_{k}=|\nu|_{\infty, B_{R/2}(x_0)}$, $\Phi_0= \left( \Mint_{B_R(x_0)} \nu^q  dx\right)^{\frac{1}{q}}$, from \eqref{eq16:th35}, \eqref{eq17:th35}, recalling the definition of $g$ and after elementary computations we deduce that
$$\sup_{B_{R/2}(x_0)} \nu \leq c_2  \left[\left( \Mint_{B_R(x_0)} \nu^q  dx\right)^{\frac{N}{q(q-N)}} + R^{\frac{N}{q-N}} \left( \Mint_{B_R(x_0)} |\rho|^q  dx\right)^{\frac{N}{q(q-N)}} \right] \left( \Mint_{B_R(x_0)} \nu^q  dx\right)^{\frac{1}{q}},$$
for some positive constant $c_2$ depending only on $N$ and $q$. The proof is complete.
\end{proof}
When $\rho$ is in $L^q(\RN)$ with $q>N$, if the solution $u$ is globally bounded and there exists $\nu_0>1$ such that $(\nu-\nu_0)_+$ has compact support, then an $L^q$-estimate for $(\nu-\nu_0)_+$ holds. This is the content of the next result which is \cite[Theorem 3.6]{Haa} revisited.

\begin{proposition}\label{prop:Lqestnu}
Assume $N\geq 3$, $q>N$, $\rho \in L^q(\RN)\cap C^1(\RN)$, and let $u\in C^3(\RN)\cap L^\infty(\RN)$ be a strictly spacelike classical solution to \eqref{eq:BI2}. Assume that there exists a number $\nu_0>1$ such that $(\nu-\nu_0)_+$ has compact support. There exists a positive constant $c$ depending only on $q$ and $\nu_0$ such that
\beq\label{eq:tesiTh36}
|(\nu - \nu_0)_+|_q\leq c |u|_\infty |\rho|_q.
\eeq
\end{proposition}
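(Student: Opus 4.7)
\emph{Plan.} The idea is to test the equation $-\operatorname{div}(\nu\nabla u)=\rho$ against a Lipschitz, compactly supported function built from the data. A natural choice is $\psi := u(\nu-\nu_0)_+^{q-1}$, which is admissible since $u\in C^3\cap L^\infty(\RN)$, $\nu\in C^2(\RN)$, and $(\nu-\nu_0)_+$ is assumed compactly supported. Writing $H:=(\nu-\nu_0)_+$ and using $\nu|\nabla u|^2 = \nu - 1/\nu$, the weak formulation yields
\[
\int \bigl(\nu-\tfrac{1}{\nu}\bigr)H^{q-1}\,dx + (q-1)\int \nu\,u\,H^{q-2}\,\nabla u\cdot\nabla\nu\,dx = \int \rho\,u\,H^{q-1}\,dx.
\]
Since $\nu_0>1$, on the support of $H$ one has $\nu-1/\nu = (\nu-\nu_0)+(\nu_0-1/\nu_0) \geq H$, so the first term dominates $\int H^q\,dx$; by H\"older's inequality the right-hand side is bounded by $|u|_\infty |\rho|_q |H|_q^{q-1}$.

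The delicate point is the cross term. A direct integration by parts using $(q-1)H^{q-2}\nabla\nu = \nabla H^{q-1}$ (on $\{\nu>\nu_0\}$) combined with $\operatorname{div}(\nu\nabla u)=-\rho$ reproduces the two other terms and reduces the identity to a tautology, so the single test $\psi$ is not enough. To break this degeneracy I would use the differentiated equation $-\operatorname{div}\bigl(D^2 F(\nabla u)\nabla u_i\bigr) = \partial_i\rho$ (cf.\ the proof of Theorem~\ref{teo:maingradestimate2}), tested against $u_i H^{q-1}$ and summed in $i$. The key algebraic identity $\sum_i u_i\,D^2 F(\nabla u)\nabla u_i = A\nabla\nu$, with $A := \nu^{-2}\mathbb{I}+\nabla u\otimes\nabla u$ satisfying $A\geq\nu^{-2}\mathbb{I}$ by \eqref{eq:unifboundD2F}, yields a nonnegative coercive left-hand side controlling $(q-1)\int H^{q-2}(A\nabla\nu,\nabla\nu)\,dx \geq (q-1)\int H^{q-2}\nu^{-2}|\nabla\nu|^2\,dx$; the right-hand side $\int \nabla u\cdot\nabla\rho\,H^{q-1}\,dx$, after one integration by parts and the substitution $\Delta u = -(\rho+\nabla u\cdot\nabla\nu)/\nu$, splits into a $\rho^2$-term and mixed terms of the form $\int \rho H^{a}\nabla u\cdot\nabla\nu\,dx$.

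Combining this auxiliary identity with the first, I would absorb the mixed terms into the coercive quantity by Cauchy--Schwarz and control the $\rho^2$-term via H\"older, using $\nu\geq\nu_0>1$ on the support. The outcome is an inequality of the form
\[
\int H^q\,dx \leq c(q,\nu_0)\,|u|_\infty\,|\rho|_q\,|H|_q^{q-1},
\]
and dividing by $|H|_q^{q-1}$ yields \eqref{eq:tesiTh36}. The main technical obstacle is performing the absorption so that all constants depend only on $q$ and $\nu_0$ (and neither on $|\nabla u|_\infty$ nor on the support size of $H$); this is made possible by the condition $\nu_0>1$, which ensures $\nu-1/\nu\geq\nu_0-1/\nu_0>0$ uniformly on the support, and by the compactness of $\operatorname{supp}H$, which removes all boundary contributions in the integrations by parts.
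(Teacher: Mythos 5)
Your overall strategy mirrors the paper's: test the equation against a weight of the form $u\cdot(\text{power of }H)$ with $H:=(\nu-\nu_0)_+$, then test the differentiated equation $-\operatorname{div}(D^2F(\nabla u)\nabla u_i)=\rho_i$ against $u_i\varphi$ with $\varphi=H^{q-1}$ and sum in $i$ to control $\int H^{q-2}(A\nabla\nu,\nabla\nu)\,dx$ by $\int H^{q-2}\rho^2\,dx$, finally closing with Young and H\"older. Your handling of the auxiliary (differentiated) identity is essentially correct and agrees with the paper's.

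There is, however, a genuine gap in the first identity, caused by the test function $\psi=uH^{q-1}$. Your cross term is $(q-1)\int\nu\,u\,H^{q-2}\,\nabla u\cdot\nabla\nu\,dx$, and the extra factor of $\nu$ (unbounded a priori) is the problem. After the Cauchy--Schwarz bound $|\nabla u\cdot\nabla\nu|\le|\nabla u|\sqrt{(A\nabla\nu,\nabla\nu)_{\RN}}$ and Young's inequality, you are led to control $\int(\nu^2-1)H^{q-2}\,dx$ (since $\nu^2|\nabla u|^2=\nu^2-1$). On the support of $H$ one has $\nu=H+\nu_0$, so
\[
(\nu^2-1)H^{q-2}=H^q+2\nu_0 H^{q-1}+(\nu_0^2-1)H^{q-2},
\]
and while the first two pieces absorb into the coercive left-hand side $\int(\nu-\tfrac1\nu)H^{q-1}\ge\int H^q+(\nu_0-\nu_0^{-1})\int H^{q-1}$, the piece $(\nu_0^2-1)\int H^{q-2}\,dx$ does not: it has the wrong power of $H$ and carries no $\rho$. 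Your auxiliary estimate only bounds $\int H^{q-2}(A\nabla\nu,\nabla\nu)$ by $\int H^{q-2}\rho^2$, so it cannot touch this leftover, and there is no way to bound $\int H^{q-2}\,dx$ by $|H|_q^{q-1}$ and $|\rho|_q$ alone (this would require control of $|\operatorname{supp}H|$, which you do not have). So the ``combination and absorption'' you describe does not close.

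The paper sidesteps this by testing against $\varphi=u\nu^{-1}(\nu-\nu_0)_+^q$: the extra $\nu^{-1}$ cancels the $\nu$ of the equation, the coercive term becomes $\int H^q|\nabla u|^2$ (with $|\nabla u|^2\ge1-\nu_0^{-2}>0$ on the support), and the cross terms come out with the benign weights $\nu^{-1}H^q\le H^{q-1}$ and $H^{q-1}$ --- no stray $\nu$. Equivalently, your argument can be repaired by splitting $\nu=H+\nu_0$ inside the cross term and integrating by parts \emph{only} the $\nu_0$-piece: using $(q-1)H^{q-2}\nabla\nu=\nabla H^{q-1}$ and $\operatorname{div}(\nu\nabla u)=-\rho$, the term $\nu_0\int u\,\nabla u\cdot\nabla H^{q-1}$ produces $-\nu_0\int|\nabla u|^2 H^{q-1}$, which combines with $\int\nu|\nabla u|^2 H^{q-1}$ to give exactly $\int|\nabla u|^2 H^q$, while the newly generated cross and $\rho$-terms carry the weight $\nu_0\nu^{-1}H^{q-1}\le H^{q-1}$. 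You correctly observed that integrating by parts the \emph{entire} cross term is a tautology; it is the partial integration of the $\nu_0$-piece that breaks the degeneracy and should replace the vague ``combining'' step in your plan.
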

\begin{proof}
Since $u$ is a classical solution to \eqref{eq:BI2}, a standard integration by parts gives
\beq\label{eq1:th36}
\int_{\RN} \nu (\nabla u, \nabla \varphi)_{\RN} \ dx = \int_{\RN}  \varphi  \rho \ dx \ \ \forall \varphi \in C^1_c(\RN).
\eeq
In particular, thanks to the assumptions and by density we infer that \eqref{eq1:th36} holds true also for any $\varphi\in W^{1,p}_0(\Omega)$, where $\Omega$ is a given smooth bounded domain of $\RN$ and $p\geq 1$. Hence, since we are assuming that $(\nu-\nu_0)_+$ has compact support, by taking $\varphi=u\nu^{-1}(\nu-\nu_0)_+^{q}$ in \eqref{eq1:th36} we get 
\begin{eqnarray}\label{eq2:th36}
\displaystyle \int_{\RN}(\nu-\nu_0)_+^{q} |\nabla u|^2 \ dx \!\!\!\!&=&\!\!\!\! \displaystyle \int_{\RN} u \nu^{-1} (\nu-\nu_0)_+^{q} (\nabla u, \nabla \nu)_{\RN}   -q\int_{\RN} u (\nu-\nu_0)_+^{q-1} (\nabla u, \nabla \nu)_{\RN}  \ dx\nonumber \\
&&\displaystyle +\int_{\RN} u \nu^{-1} (\nu-\nu_0)_+^{q} \rho  \ dx.
\end{eqnarray}
From \eqref{eq2:th36}, recalling $A=\nu^{-3} D^2F(\nabla u)$ and the fact that $A\nabla u= \nabla u$, using the ellipticity bounds in \eqref{eq7:th35}, and observing that $\nu^{-1} (\nu-\nu_0)_+ \leq 1$, we obtain
\beq\label{eq3:th36}
\begin{array}{lll}
\displaystyle \int_{\RN}(\nu-\nu_0)_+^{q} |\nabla u|^2 \ dx &\leq & \displaystyle (q+1)\int_{\RN} |u| (\nu-\nu_0)_+^{q-1} |\nabla u| \sqrt{(A\nabla \nu, \nabla \nu)_{\RN}}  \ dx\\[12pt]
&&\displaystyle +\int_{\RN} |u| (\nu-\nu_0)_+^{q-1} |\rho|  \ dx.
\end{array}
\eeq
Now, observe that saying $\nu\geq \nu_0$ is equivalent to $|\nabla u|^2\geq \gamma_{\nu_0}:=1-\frac{1}{\nu_0^2}$. As $\nu_0>1$ we have $\gamma_{\nu_0} \in (0,1)$ so that Young inequality \eqref{eq:elementinterpolineq} implies
\begin{eqnarray}\label{eq4:th36}
\displaystyle |u| (\nu-\nu_0)_+^{q-1} |\nabla u| \sqrt{(A\nabla \nu, \nabla \nu)_{\RN}}&\leq&\displaystyle |u|^2  (\nu-\nu_0)_+^{q-2} (A\nabla \nu, \nabla \nu)_{\RN} + \frac{1}{4} (\nu-\nu_0)_+^{q} |\nabla u|^2,\nonumber\\
\displaystyle  |u| (\nu-\nu_0)_+^{q-1} |\rho| &\leq&\displaystyle \frac{1}{\gamma_{\nu_0}} |u|^2 (\nu-\nu_0)_+^{q-2}|\rho|^2 + \frac{\gamma_{\nu_0}}{4} (\nu-\nu_0)_+^{q}.
\end{eqnarray}
Then, from \eqref{eq3:th36}, \eqref{eq4:th36}, regrouping the terms, taking into account that $|\nabla u|^2\geq \gamma_{\nu_0}$ and recalling that $u\in L^\infty(\RN)$, we deduce that
\begin{eqnarray}\label{eq5:th36}
\displaystyle \int_{\RN}(\nu-\nu_0)_+^{q} \ dx &\leq & \displaystyle \frac{2q|u|^2_{\infty}}{\gamma_{\nu_0}} \int_{\RN} (\nu-\nu_0)_+^{q-2} (A\nabla \nu, \nabla \nu)_{\RN}  \ dx\nonumber\\
&&\displaystyle +  \frac{2|u|^2_{\infty}}{\gamma_{\nu_0}^2}\int_{\RN} (\nu-\nu_0)_+^{q-2} \rho^2 \ dx.
\end{eqnarray}
In order to estimate the first integral in the right-hand side of \eqref{eq5:th36}, since $\nabla \nu=\nu^3 \sum_{i=1}^N u_i \nabla u_i$, we deduce from \eqref{eq3:th35} and the definition of $A$ that for all $\varphi \in C^1_c(\RN)$ such that $\varphi\geq 0$ it holds
\beq\label{eq6:th36}
\int_{\RN} (A\nabla \nu, \nabla \varphi)_{\RN} \ dx \leq  \int_{\RN}  \varphi (\nabla u,\nabla \rho)_{\RN} \ dx.
\eeq
As pointed out at the beginning of the proof, by a density argument and since $(\nu-\nu_0)_+$ has compact support, we can take $\varphi=(\nu-\nu_0)_+^{q-1}$ as test function in \eqref{eq6:th36}. Then, arguing as in \eqref{eq9:th35} (with $\eta \in C^1_c(\RN)$, $\eta\equiv 1$ in a neighborhood of the support of $(\nu-\nu_0)_+$) we get 
\beq\label{eq7:th36}
\int_{\RN} (\nu-\nu_0)_+^{q-2}(A\nabla \nu, \nabla \nu)_{\RN} \ dx \leq 2 \int_{\RN} (\nu-\nu_0)_+^{q-2}\rho^2 \ dx.
\eeq
Plugging now \eqref{eq7:th36} into \eqref{eq5:th36} and taking into account that $\gamma_{\nu_0}\in(0,1)$, we infer that
\beq\label{eq8:th36}
\int_{\RN}(\nu-\nu_0)_+^{q} \ dx \leq   \frac{6q|u|^2_{\infty}}{\gamma_{\nu_0}^2} \int_{\RN} (\nu-\nu_0)_+^{q-2} \rho^2 \ dx.
\eeq
Finally, it follows from  H\"older's inequality  that
\beq\label{eq9:th36}
 \int_{\RN} (\nu-\nu_0)_+^{q-2} \rho^2 \ dx \leq  \left(\int_{\RN} (\nu-\nu_0)_+^{q} \ dx\right)^{\frac{q-2}{q}}  \left(\int_{\RN} |\rho|^q \ dx\right)^{\frac{2}{q}},
\eeq
and thus from \eqref{eq8:th36} and \eqref{eq9:th36}, we readily deduce the claimed inequality \eqref{eq:tesiTh36} with $c=\sqrt{\frac{6q}{\gamma_{\nu_0}^2}}$.
\end{proof}

\section{Gradient estimates for the Born-Infeld equation}

In this section we provide new gradient estimates for strictly spacelike weak solutions of the Born-Infeld equation \eqref{eq:BI} with $\rho \in L^q_{loc}(\RN)\cap L^m(\RN)$ (or  $\rho \in L^q(\RN)\cap L^m(\RN)$), $q>N$, $m\in[1,2_*]$, where we recall that $2_*$ denotes the conjugate H\"older exponent of the critical Sobolev exponent. One could have considered $\rho \in L^q_{loc}(\RN)\cap \X^*$ with $q>N$ arguing merely with little changes, but for simplicity we choose $\rho\in L^m(\RN)$ with $m\in[1,2_*]$.
We begin with a couple of preliminary technical results about the imbedding properties of the convex set $\X$ defined in \eqref{eq:defsetX}.

\begin{lemma}\label{lem:tech2}
Let $N\geq 3$ and $m\in(1,2_*]$. There exists a positive constant $c$ depending only on $N$ and $m$ such that
\begin{equation}\label{eq1:lemtech2}
|\phi|_{m^\prime} \leq c |\n \phi|_2^{2\frac{(N+1)m-N}{mN}} \ \ \forall \phi \in \X,
\end{equation}
where $m^\prime=\frac{m}{m-1}$ is the conjugate exponent of $m$.
\end{lemma}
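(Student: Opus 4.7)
The plan is to combine standard $L^p$-interpolation with a Morrey-type pointwise bound exploiting the defining constraint $|\nabla \phi|_\infty \le 1$ of the set $\X$.

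First I would observe that, since $m\in(1,2_*]$, its conjugate exponent satisfies $m'\in[2^*,\infty)$, so one can interpolate between $L^{2^*}(\RN)$ and $L^\infty(\RN)$: writing $\frac{1}{m'}=\frac{\theta}{2^*}+\frac{1-\theta}{\infty}$ forces $\theta=\frac{2^*}{m'}\in(0,1]$, and hence
\begin{equation*}
|\phi|_{m'}\le |\phi|_{2^*}^{\theta}\,|\phi|_\infty^{1-\theta}.
\end{equation*}
The factor $|\phi|_{2^*}$ is controlled by the Sobolev embedding $D^{1,2}(\RN)\hookrightarrow L^{2^*}(\RN)$, which gives $|\phi|_{2^*}\le c_S|\nabla\phi|_2$.

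The key point — and the only step where the geometric constraint $|\nabla\phi|_\infty\le 1$ truly enters — is to bound $|\phi|_\infty$ by a suitable power of $|\nabla\phi|_2$. Since $\phi\in\X\subset D^{1,2}(\RN)$ vanishes at infinity and is $1$-Lipschitz, given any point $x_0\in\RN$ with $|\phi(x_0)|=:M$ close to $|\phi|_\infty$, one has $|\phi(x)|\ge M-|x-x_0|\ge M/2$ on the ball $B_{M/2}(x_0)$. Integrating yields
\begin{equation*}
|\phi|_{2^*}^{2^*}\ge \int_{B_{M/2}(x_0)} |\phi|^{2^*}\,dx \ge 2^{-2^*}M^{2^*}\,\omega_N\Bigl(\tfrac{M}{2}\Bigr)^{N},
\end{equation*}
which, after letting $M\to|\phi|_\infty$, gives $|\phi|_\infty\le c\,|\phi|_{2^*}^{\,2^*/(2^*+N)}=c\,|\phi|_{2^*}^{\,2/N}$ since $\frac{2^*}{2^*+N}=\frac{2}{N}$. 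Combining with the Sobolev bound yields $|\phi|_\infty\le c\,|\nabla\phi|_2^{\,2/N}$.

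Plugging both bounds into the interpolation inequality produces
\begin{equation*}
|\phi|_{m'}\le c\,|\nabla\phi|_2^{\,\theta+(1-\theta)\frac{2}{N}},
\end{equation*}
and a direct computation with $\theta=\frac{2^*}{m'}=\frac{2N(m-1)}{m(N-2)}$ reduces the exponent to
\begin{equation*}
\theta\cdot\frac{N-2}{N}+\frac{2}{N}=\frac{2(m-1)}{m}+\frac{2}{N}=\frac{2\bigl((N+1)m-N\bigr)}{mN},
\end{equation*}
which is the claimed exponent. The main obstacle is the pointwise step bounding $|\phi|_\infty$ by $|\phi|_{2^*}^{2/N}$: it is here, and only here, that the Lipschitz constraint $|\nabla\phi|_\infty\le 1$ characterizing $\X$ plays its role, and the argument would fail without it. Everything else is elementary interpolation and scaling bookkeeping.
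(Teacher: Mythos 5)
Your proof is correct, and it takes a genuinely different route from the paper's. The paper chooses $k=\frac{mN}{(N+1)m-N}\in[2,N)$ so that $k^*=m'$, applies the Sobolev embedding $\mathcal D^{1,k}(\RN)\hookrightarrow L^{k^*}(\RN)$ to get $|\phi|_{m'}\le c_0|\nabla\phi|_k$, and then exploits $|\nabla\phi|\le 1$ only through the monotonicity $|\nabla\phi|_k\le|\nabla\phi|_2^{2/k}$. You instead stay with the single embedding $D^{1,2}(\RN)\hookrightarrow L^{2^*}(\RN)$, interpolate between $L^{2^*}$ and $L^\infty$, and feed in a quantitative $L^\infty$ bound $|\phi|_\infty\le c\,|\phi|_{2^*}^{2/N}$ obtained directly from the Lipschitz constraint via the observation that $|\phi|\ge|\phi|_\infty/2$ on a ball of radius $|\phi|_\infty/2$ centred at a near-maximum. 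The arithmetic checks out (both $\frac{2^*}{2^*+N}=\frac{2}{N}$ and the reduction $\theta\frac{N-2}{N}+\frac{2}{N}=\frac{2((N+1)m-N)}{mN}$), and the use of $|\phi(x_0)|$ close to $|\phi|_\infty$ is justified since $\phi\in\X$ is continuous and vanishes at infinity. What your approach buys is a self-contained, single-exponent argument that also re-derives the $L^\infty$ embedding of $\X$ (essentially Lemma 2.1 of \cite{BDP} or Lemma \ref{lem:tech3} here) along the way; what the paper's buys is brevity, since the $k$-Sobolev inequality absorbs the interpolation in one line. Both correctly identify the Lipschitz constraint as the only point where membership of $\X$, beyond $D^{1,2}$, is used.
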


\begin{proof}
Take $\phi \in \X$. Since $|\n \phi | \leq 1$ in $\RN$, we have $|\n \phi| \in L^k(\RN)$ for all $k\geq 2$. Therefore, by Sobolev's inequality, we infer that $\phi \in L^{k^*}(\RN)$ for $k \in [2,N[$ and
\beq\label{eq:ApplSobineq} 
|\phi|_{k^*} \leq c_0 |\n \phi|_k \leq c_0 |\n \phi|_2^{2/k},
\eeq
where $c_0=c_0(N,k)$ is the Sobolev constant for the embedding of $\mathcal{D}^{1,k}(\RN)\hookrightarrow L^{k^*}(\R^N)$.
Observe that given $m \in ]1,2_*]$, it is always possible to find $k \in [2,N[$ such that $k^*=m^\prime$, i.e.  $k=\frac{mN}{(N+1)m-N}$. Using \eqref{eq:ApplSobineq}, we then  deduce \eqref{eq1:lemtech2} with $c(N,m):=c_0\left(N, \frac{mN}{(N+1)m-N}\right)$.\\
\end{proof}

\begin{lemma}\label{lem:tech3}
Given $N\geq 3$ and $s>N$, there exist two positive constants $c_1$, $c_2$, both depending only on $N$ and $s$, such that
\begin{equation}\label{eq2:lemtech2}
|\phi|_{\infty} \leq c_1 \left(c_2 |\n \phi|_2^{2\frac{N+s}{Ns}} + |\n \phi|_2^{\frac{2}{s}} \right)\ \ \forall \phi \in \X.
\end{equation}
\end{lemma}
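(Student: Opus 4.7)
The plan is to combine the Sobolev embedding of $\X$ into $L^{2^*}(\RN)$ with Morrey's H\"older estimate for $s > N$, to exploit the gradient bound $|\nabla\phi|_\infty \le 1$ via a trivial interpolation of the $L^s$ norm of the gradient, and to split into two regimes according to the size of $|\nabla\phi|_2$.

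First I would collect three elementary ingredients. From the Sobolev inequality on $D^{1,2}(\RN)$ one has $|\phi|_{2^*} \le c_0 |\nabla\phi|_2$ with $c_0 = c_0(N)$. From $|\nabla\phi|_\infty \le 1$ and $s>2$, the pointwise bound $|\nabla\phi|^s \le |\nabla\phi|^2$ integrates to $|\nabla\phi|_s \le |\nabla\phi|_2^{2/s}$. From Morrey's inequality, since $s>N$ and $\phi\in C^{0,1}(\RN)$ is continuous, one has the global H\"older estimate $|\phi(x)-\phi(y)| \le C_M(N,s)\,|x-y|^{1-N/s}|\nabla\phi|_s$ for all $x,y\in\RN$.

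Then, for any $x_0\in\RN$ and $R>0$, H\"older's inequality combined with the Sobolev bound produces a point $y\in B_R(x_0)$ with $|\phi(y)| \le \Mint_{B_R(x_0)}|\phi|\,dz \le C(N)\, R^{-(N-2)/2}|\nabla\phi|_2$. Applying the H\"older estimate at the pair $(x_0,y)$ (recall $|x_0-y|\le R$) and using the gradient interpolation yields, for every $R>0$, the one-parameter master inequality
\[
|\phi(x_0)| \le C_0\Big(R^{-(N-2)/2}|\nabla\phi|_2 + R^{1-N/s}|\nabla\phi|_2^{2/s}\Big),
\]
with $C_0 = C_0(N,s)$.

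The main obstacle --- and the only creative step --- is that no single choice of $R$ reproduces the two-exponent structure of \eqref{eq2:lemtech2}, so one must split into two regimes. If $|\nabla\phi|_2 \le 1$, the choice $R=1$ combined with $|\nabla\phi|_2 \le |\nabla\phi|_2^{2/s}$ (valid because $2/s<1$) gives $|\phi(x_0)| \le 2C_0\,|\nabla\phi|_2^{2/s}$. If $|\nabla\phi|_2 > 1$, the choice $R = |\nabla\phi|_2^{2/N}$ equalizes the two terms of the master inequality, both becoming $|\nabla\phi|_2^{2/N}$, and then $|\nabla\phi|_2 > 1$ gives $|\nabla\phi|_2^{2/N} \le |\nabla\phi|_2^{2/N+2/s} = |\nabla\phi|_2^{2(N+s)/(Ns)}$. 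Summing the two regimes into a single bound and taking the supremum in $x_0$ delivers \eqref{eq2:lemtech2} with $c_1,c_2$ depending only on $N$ and $s$; the rest of the argument is a routine combination of classical inequalities.
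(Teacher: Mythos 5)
Your proof is correct, but it takes a genuinely different route than the paper's. The paper proceeds by a single global functional chain: it invokes the Morrey--Sobolev embedding $W^{1,s}(\RN)\hookrightarrow L^\infty(\RN)$ to write $|\phi|_\infty \leq c_1(|\phi|_s + |\nabla\phi|_s)$, then bounds $|\nabla\phi|_s\le |\nabla\phi|_2^{2/s}$ as you do, and separately controls $|\phi|_s$ via the homogeneous Sobolev embedding $\mathcal{D}^{1,k}(\RN)\hookrightarrow L^{k^*}(\RN)$ with the exponent $k=\frac{Ns}{N+s}\in(\frac N2,N)$ chosen so that $k^*=s$, giving $|\phi|_s\le c_0|\nabla\phi|_2^{2/k}=c_0|\nabla\phi|_2^{2(N+s)/(Ns)}$. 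This produces both exponents at once, with no case distinction, each coming from a separately scaled Sobolev-type inequality applied globally. You instead use only the $\mathcal{D}^{1,2}\hookrightarrow L^{2^*}$ embedding plus the local Morrey oscillation estimate: you bound $\inf_{B_R(x_0)}|\phi|$ by $C R^{-(N-2)/2}|\nabla\phi|_2$ and the oscillation of $\phi$ over $B_R(x_0)$ by $C R^{1-N/s}|\nabla\phi|_2^{2/s}$, obtaining a one-parameter family of bounds in $R$; then you split on whether $|\nabla\phi|_2\le 1$ or $>1$, taking $R=1$ in the small-gradient regime and $R=|\nabla\phi|_2^{2/N}$ (the balancing radius) in the large-gradient regime. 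The paper's argument is shorter and avoids the dichotomy, while yours makes transparent where the two exponents come from --- they are the two endpoint scalings of the master inequality --- and relies on more elementary ingredients (no auxiliary embedding with exponent $k=Ns/(N+s)$). Both reach the same conclusion with constants depending only on $N$ and $s$.
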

\begin{proof}
By Morrey-Sobolev's inequality we know that there exists a positive constant $c_1$ depending only on $N$ and $s$, such that
\beq\label{eq:MorreySobolev}
 |\phi|_\infty \leq c_1 \|\phi\|_{W^{1,s}(\RN)}, \ \forall \phi \in W^{1,s}(\RN),
\eeq
where $\|\phi\|_{W^{1,s}(\RN)}=|\phi|_s+|\n \phi|_s$ is the standard norm in $W^{1,s}(\RN)$. Now, 
on the one hand we have, as in \eqref{eq:ApplSobineq}, 
$$|\n \phi|_s\leq |\n \phi|_2^{2/s}, \ \forall\phi \in \X$$  and 
on the other hand, since $s>N$ we can always find $k \in\mathopen ]\frac{N}{2}, N\mathclose [$ such that $k^*=s$, namely $k=\frac{Ns}{N+s}$, to estimate $|\phi|_s=|\phi|_{k^*}$ by help of \eqref{eq:ApplSobineq}. Then \eqref{eq:MorreySobolev} yields
$$
|\phi|_\infty \leq c_1 (c_0 |\n \phi|_2^{2 \frac{N+s}{Ns}} + |\n \phi|_2^{2/s}), \ \forall \phi \in \X,
$$
where $c_0=c_0(N,\frac{Ns}{N+s})$ is the Sobolev constant for the embedding of $\mathcal{D}^{1,\frac{Ns}{N+s}}(\RN)\hookrightarrow L^{s}(\R^N)$, and this leads to \eqref{eq2:lemtech2} with $c_2(N,s):=c_0(N,\frac{Ns}{N+s})$.
\end{proof}

We now state and prove the first gradient estimate of this section (namely Proposition \ref{prop:mainestimate0-intro} in Section 1). We begin by studying the case $\rho \in L^m(\RN)$ with  $m\in(1, 2_*]$.

\begin{proposition}\label{prop:mainestimate0}
Assume $N\geq 3$, $q>N$ and $\rho \in L^m(\RN) \cap C^1(\RN)$ with $m\in(1,2_*]$. Let $u \in \X\cap C^3(\RN)$ be a strictly spacelike weak solution of \eqref{eq:BI}. There exist  $\gamma \in (0,\frac{1}{N})$ depending only on $N$, $c>0$  depending only on $N$ and $m$, such that for any $R>0$, $x_0\in \RN$,
  \begin{equation}\label{eq:thesisprop:mainestimateCOR0}
\displaystyle (1-|\nabla u|^2)^{\frac{\gamma}{2}}(x_0)\geq  \displaystyle \left(\frac{\omega_N}{\omega_N+ c R^{-N}|\rho|_m^{\frac{Nm}{N-m}}}\right)^{\gamma+1} - P\left(|\rho|_{q,K_R(x_0)}R^{\frac{q-N}{q}}\right),
\end{equation}
where 
\begin{eqnarray}\label{eq:defPk}
\displaystyle P(k)&:=&\!\! \left(\frac{3}{2}\right)\frac{2^{q-4} q\ \omega_N^{-\frac{2}{q}}}{q-N}k^2 + \left(\frac{3}{2}\right)^{q-1}\frac{2^{q-5} q \ \omega_N^{-2+\frac{2}{q}}}{(q-N)^{q-1}(q-1)}k^{2q-2} + \frac{2^{q-4} \omega_N^{-1}}{(q-N)^{q-1}} \left(\frac{3}{2}+\frac{1}{q-N}\right)k^{q} \nonumber\\
&&\displaystyle +\frac{2^{q-3} q\ \omega_N^{-\frac{1}{q}}}{q-N} k + \left(\frac{3}{2}\right)^{q-1}\frac{2^{q-4} q \ \omega_N^{-2+\frac{1}{q}}}{(q-N)^{q}(2q-1)}k^{2q-1}, \ \hbox{for $k\geq0$} .
\end{eqnarray}
\end{proposition}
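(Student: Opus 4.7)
The plan is to apply Theorem \ref{teo:mainestimate} at $x_0$ with the prescribed radius $R$ and convert each contribution on the right-hand side of \eqref{eqTesiProp2} into the two terms of \eqref{eq:thesisprop:mainestimateCOR0}. Since $u \in \X$ and $\X$ embeds continuously in $L^\infty(\RN)$, Lemma \ref{lem:boundedLorentzballX}(i) ensures that $K_R(x_0)$ is bounded for every $R>0$, so Theorem \ref{teo:mainestimate} applies. The Hessian integral $cR^{2-N}\int_{K_{R/2}(x_0)}v^{\gamma-1}(|D^2u|^2+|\nabla v|^2)\,dx$ is non-negative and is simply discarded to obtain a lower bound for $\omega_N v^\gamma(x_0)$.

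I would then absorb the two $|\rho|_{q,K_R(x_0)}$-integrals in \eqref{eqTesiProp2} into the polynomial $P(k)$. Both integrands are trinomials of the form $(\omega_N^{1/q}+a_1 s^{2-\beta}+a_2 s^{1-\beta/2})^p$ with $p\in\{q-2,q-1\}$; expanding via $(x+y+z)^p\leq 3^{p-1}(x^p+y^p+z^p)$ and integrating term by term produces finitely many monomials in $s$ and $|\rho|_{q,K_R(x_0)}$. Since $\beta=2N/q$ gives $2-\beta=2(q-N)/q$ and $1-\beta/2=(q-N)/q$, the resulting monomials regroup as the five powers of the scale-invariant quantity $k:=|\rho|_{q,K_R(x_0)}R^{(q-N)/q}$ defining $P(k)$ in \eqref{eq:defPk}.

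The key step is the lower bound on $R^{-N}\int_{K_R(x_0)}v^{\gamma+1}\,dx$. Since $B_R(x_0)\subset K_R(x_0)$ and $v^{\gamma+1}\geq 0$, restricting to $B_R(x_0)$ and applying Jensen's inequality to the convex map $t\mapsto t^{\gamma+1}$ yields
\begin{equation*}
\int_{K_R(x_0)}v^{\gamma+1}\,dx\geq \frac{\bigl(\int_{B_R(x_0)}v\,dx\bigr)^{\gamma+1}}{(\omega_NR^N)^\gamma},
\end{equation*}
and since $\int_{B_R(x_0)}v\,dx\geq \omega_NR^N-\int_{\RN}(1-v)\,dx$, it suffices to control $\int_{\RN}(1-v)\,dx$. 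For this I would invoke the variational structure: as the unique weak solution in $\X$, $u$ coincides with the minimizer of $I_\rho$, so $I_\rho(u)\leq I_\rho(0)=0$, giving $\int(1-v)\,dx\leq\langle \rho,u\rangle\leq |\rho|_m|u|_{m'}$ by H\"older's inequality. Lemma \ref{lem:tech2} yields $|u|_{m'}\leq c|\nabla u|_2^{\alpha}$ with $\alpha=2((N+1)m-N)/(mN)$, while the identity $|\nabla u|^2=(1-v)(1+v)\leq 2(1-v)$ provides the coupling $|\nabla u|_2^2\leq 2\int(1-v)\,dx$. Solving this self-referential pair (possible because $m\leq 2_*<N$ implies $2-\alpha>0$) produces $\int_{\RN}(1-v)\,dx\leq c_1(N,m)|\rho|_m^{Nm/(N-m)}$.

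Substituting back into the Jensen bound gives a lower bound of the shape $((\omega_NR^N-c_1|\rho|_m^{Nm/(N-m)})/(\omega_NR^N))^{\gamma+1}$; a suitable choice of the constant $c$ in the statement and absorption of any lower-order algebraic discrepancy into the existing $k$ and $k^2$ terms of $P$ then convert this into the normalized form $\omega_N^{\gamma+1}/(\omega_N+cR^{-N}|\rho|_m^{Nm/(N-m)})^{\gamma+1}$ required by \eqref{eq:thesisprop:mainestimateCOR0}. The principal obstacle I anticipate is this final algebraic transition, together with the self-referential argument controlling $\int(1-v)$, whose resolution relies crucially on the strict inequality $m<N$ and which degenerates at $m=1$, explaining the need for the separate treatment in Proposition \ref{prop:mainestimate20}.
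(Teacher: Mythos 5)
Your overall framework is the right one: apply Theorem \ref{teo:mainestimate} (justified via Lemma \ref{lem:boundedLorentzballX}(i) since $u\in\X\subset L^\infty(\RN)$), discard the non-negative Hessian integral, expand the two $\rho$-integrals into the polynomial $P$, and invoke the variational structure to control the remaining term. Your ``self-referential'' argument is also essentially that of the paper: from $I_\rho(u)\le I_\rho(0)=0$, the elementary inequality $\tfrac12 t\le 1-\sqrt{1-t}$, H\"older and Lemma \ref{lem:tech2}, one gets $\tfrac12|\nabla u|_2^2\le c|\rho|_m|\nabla u|_2^{\alpha}$ and hence $\int_{\RN}(1-v)\,dx\le c_1|\rho|_m^{Nm/(N-m)}$, which matches the paper's \eqref{eq3quatermainteo}.

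However, the last and crucial step --- passing from this control to the claimed lower bound $R^{-N}\int_{K_R}v^{\gamma+1}\,dx\ge \omega_N^{\gamma+2}\big(\omega_N+cR^{-N}|\rho|_m^{m^*}\big)^{-(\gamma+1)}$ --- does not go through with your Jensen argument. Writing $x:=c_1|\rho|_m^{m^*}/(\omega_N R^N)$, your chain $\int_{K_R}v^{\gamma+1}\ge |B_R|^{-\gamma}\big(\int_{B_R}v\big)^{\gamma+1}$ together with $\int_{B_R}v\ge \omega_NR^N-c_1|\rho|_m^{m^*}$ yields, when $x<1$, the lower bound $\omega_N(1-x)^{\gamma+1}$, while the statement requires $\omega_N(1+x)^{-(\gamma+1)}$. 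Since $(1-x)(1+x)\le 1$, one has $(1-x)^{\gamma+1}\le (1+x)^{-(\gamma+1)}$ for every $x\in(0,1)$, so your bound is strictly weaker; worse, for $x\ge 1$ (which happens for any fixed $\rho\neq 0$ once $R$ is small enough) the bound $\int_{B_R}v\ge \omega_NR^N-c_1|\rho|_m^{m^*}$ degenerates and Jensen yields nothing, whereas the proposition asserts a positive lower bound for \emph{every} $R>0$. Your proposed fix --- absorbing the discrepancy into the $k$ and $k^2$ terms of $P$ --- cannot work, because that discrepancy depends on $|\rho|_m$ and $R$, whereas $P$ is a polynomial in $k=|\rho|_{q,K_R(x_0)}R^{(q-N)/q}$; no choice of the constant $c$ in front of $R^{-N}|\rho|_m^{m^*}$ repairs the inequality $(1-x)\ge (1+cx)^{-1}$ near $x=1$.

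The paper's route avoids this by working with $v^{-1}$ instead of $v$: it tests the weak formulation of \eqref{eq:BI} with $u$ itself (not just minimality) to obtain $\int_{\RN}\frac{|\nabla u|^2}{v}\,dx\le c_1|\rho|_m^{m^*}$, then uses the identity $\int_{B_R}v^{-1}=\int_{B_R}v+\int_{B_R}\frac{|\nabla u|^2}{v}\le \omega_NR^N+c_1|\rho|_m^{m^*}$ and the reverse-H\"older splitting $\omega_NR^N=\int_{B_R}1\le\big(\int_{B_R}v^{\gamma+1}\big)^{1/(\gamma+2)}\big(\int_{B_R}v^{-1}\big)^{(\gamma+1)/(\gamma+2)}$. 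This last estimate cannot be reproduced by Jensen applied directly to $v$, because it crucially uses the singular weight $v^{-1}$ (geometrically the area density) and the weak formulation.
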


\begin{proof}
By Lemma \ref{lem:tech3} we know that $u\in L^\infty(\RN)$ since $u\in \X$ (see also \cite[Lemma 2.1]{BDP}). Also, it is easily seen that 
$u$ is a classical solution of \eqref{eq:BI}. Then from Theorem \ref{teo:mainestimate} and Lemma \ref{lem:boundedLorentzballX}-(i), we infer that there exists a positive constant $\gamma \in (0,\frac{1}{N})$ depending only on $N$ such that for any fixed $x_0\in\RN$, $R>0$, one has
$$
 \begin{small}
\begin{array}{lll}
\displaystyle  \omega_N v^\gamma(x_0)&\geq & \displaystyle R^{-N} \int_{K_{R}(x_0)} v^{\gamma+1} \ dx\\[12pt]
 && \displaystyle-  \frac{3}{2}|\rho|_{q,K_R(x_0)}^2 \int_0^R  s^{1-\beta}  \left(\omega_N^{\frac{1}{q}} + \frac{3\ \omega_N^{-\frac{1}{q}} |\rho|_{q,K_R(x_0)}^2}{2q(2-\beta)}\ s^{2-\beta} +\frac{|\rho|_{q,K_R(x_0)}}{2q(1-\frac{\beta}{2})}\ s^{1-\frac{\beta}{2}} \right)^{q-2} ds\\[12pt]
 &&\displaystyle-  \frac{1}{2}|\rho|_{q,K_R(x_0)} \int_0^R  s^{-\frac{\beta}{2}}  \left(\omega_N^{\frac{1}{q}} + \frac{3\ \omega_N^{-\frac{1}{q}} |\rho|_{q,K_R(x_0)}^2}{2q(2-\beta)}\ s^{2-\beta} +\frac{|\rho|_{q,K_R(x_0)}}{2q(1-\frac{\beta}{2})}\ s^{1-\frac{\beta}{2}} \right)^{q-1} ds,
\end{array}
\end{small}
$$
where $v=\sqrt{1-|\nabla u|^2}$ and $\beta=\frac{2N}{q}$. Notice that we discarded the second integral in the right-hand side of \eqref{eqTesiProp2} because it is non-negative.

Using repeatedly the elementary convexity inequality 
\beq\label{eq:elementaryineq}
(a+b)^\alpha\leq 2^{\alpha-1} \left(a^\alpha + b^\alpha\right)\ \ \forall a,b>0, \forall\alpha\geq 1,
\eeq 
and taking into account that $2-\beta=2\frac{q-N}{q}$, we deduce from elementary computations that
\begin{eqnarray}\label{eq:sviluppogradestim}
\displaystyle  \omega_N v^\gamma(x_0)&\geq & \displaystyle R^{-N} \int_{K_{R}(x_0)} v^{\gamma+1} \ dx -  \left(\frac{3}{2}\right)\frac{2^{q-4} q\ \omega_N^{1-\frac{2}{q}}}{q-N}|\rho|_{q,K_R(x_0)}^2R^{2-\beta}  \nonumber \\
&&\displaystyle   - \left(\frac{3}{2}\right)^{q-1}\frac{2^{q-5} q \ \omega_N^{-1+\frac{2}{q}}}{(q-N)^{q-1}(q-1)} |\rho|_{q,K_R(x_0)}^{2(q-1)}R^{(2-\beta)(q-1)} \nonumber\\
 &&\displaystyle -  \frac{3}{2}\frac{2^{q-4}}{(q-N)^{q-1}}  |\rho|_{q,K_R(x_0)}^{q}R^{q(1-\frac{\beta}{2})} - \frac{2^{q-3} q\ \omega_N^{1-\frac{1}{q}}}{q-N}  |\rho|_{q,K_R(x_0)} R^{1-\frac{\beta}{2}}\nonumber\\
 &&\displaystyle - \left(\frac{3}{2}\right)^{q-1}\frac{2^{q-4} q \ \omega_N^{-1+\frac{1}{q}}}{(q-N)^{q}(2q-1)} |\rho|_{q,K_R(x_0)}^{2q-1}R^{(1-\frac{\beta}{2})(2q-1)} - \frac{2^{q-4}}{(q-N)^{q}}  |\rho|_{q,K_R(x_0)}^{q}R^{q(1-\frac{\beta}{2})}. 
\end{eqnarray}
For the first integral in the right-hand side of \eqref{eq:sviluppogradestim}, we claim that there exists a positive constant $c=c(N,m)$ depending only on $N$, $m$ such that
  \begin{equation}\label{eq3bismainteo}
 R^{-N} \int_{K_{R}(x_0)} v^{\gamma+1} \ dx \geq \frac{\omega_N^{\gamma+2}}{\left(\omega_N+ c R^{-N}|\rho|_m^{\frac{Nm}{N-m}}\right)^{\gamma+1}}.
 \end{equation}
Since $\rho \in L^m(\RN)\subset \X^*$, \cite[Proposition 2.6]{BDP} implies that $u$ coincides with the unique minimizer of the energy $I_\rho$ in $\X$. Hence, by the minimality of $I_\rho(u)$, and since for $0\in \X$ one has $I_\rho(0)=0$, we deduce that $I_\rho(u)\leq 0$. Using this, the elementary inequality $\frac{1}{2}t \leq 1-\sqrt{1-t}$, for all $t\in[0,1]$, H\"older's inequality and Lemma \ref{lem:tech2}, we obtain 
\begin{equation}\label{eq0:lemtech2} 
\displaystyle \frac{1}{2}|\n u|^2_2 \le\displaystyle \irn \Big( 1- \sqrt{1-|\n u|^2}\Big) \ dx \le\displaystyle \langle \rho ,u \rangle \leq \displaystyle |\rho|_{m} | u|_{m^\prime} \leq\displaystyle c |\rho|_{m} |\n u|_{2}^{2 \frac{(N+1)m-N}{mN}} 
\end{equation}
where $m^\prime=\frac{m}{m-1}$ is the conjugate exponent of $m$ and $c=c(N,m)$ is a positive constant depending only on $N$, $m$. Then, the inequality \eqref{eq0:lemtech2} implies that 
\begin{equation}\label{eq3quatermainteo}
|\n u|_2 \leq \left(2 c |\rho|_{m}\right)^{\frac{mN}{2(N-m)}}.
\end{equation} 
Now, taking $u \in \X$ in the weak formulation of \eqref{eq:BI}, we get
\begin{equation}\label{eq:applweaksol}
\int_{\RN}  \frac{  |\n  u|^2}{ \sqrt{1- |\n u |^2}}\, dx = \int_{\RN} \rho u \, dx.
\end{equation}
Since $\rho \in L^m(\RN)$ and thanks to H\"older's inequality, Lemma \ref{lem:tech2} and \eqref{eq3quatermainteo}, we deduce that
 \begin{equation}\label{eq5mainteo}
\left|\irn \rho u \ dx\right| \leq  |\rho|_{m} |u|_{m^\prime} \leq 2^{\frac{(N+1)m-N}{N-m}} \left(c|\rho|_{m}\right)^{m^*},
\end{equation}
where $m^*=\frac{Nm}{N-m}$, and thus
 \begin{equation}\label{eq:applweaksolestimate}
\int_{\RN}  \frac{  |\n  u|^2}{ \sqrt{1- |\n u |^2}}\, dx \leq c_1|\rho|_{m}^{m^*},
\end{equation} 
with $c_1:=2^{\frac{(N+1)m-N}{N-m}} c^{m^*}$ (which is a positive constant depending only on $N$, $m$). 
Starting from the identity
$$\int_{B_R(x_0)}  \frac{  1}{ \sqrt{1- |\n u|^2}}\, dx = \int_{B_R(x_0)}  \sqrt{1- |\n u|^2}\,  dx+  \int_{B_R(x_0)}  \frac{  |\n  u|^2}{ \sqrt{1- |\n u|^2}}\, dx, $$
and using the fact that $\sqrt{1- |\n u|^2}\leq 1$, we deduce from \eqref{eq:applweaksolestimate} that
\begin{equation}\label{eq6mainteo}
\int_{B_R(x_0)}  \frac{  1}{ \sqrt{1- |\n u|^2}}\, dx\leq \omega_N R^N + c_1 |\rho|_{m}^{m^*}.
\end{equation}
Finally, since $u$ is strictly spacelike, we can write, recalling that $v=\sqrt{1-|\n u|^2}>0$  in $\RN$,
\begin{eqnarray}\label{eq:auxprop3}
\displaystyle \omega_N R^N  
&\leq &\displaystyle\left( \int_{B_R(x_0)} v^{\gamma+1} \ dx\right)^{\frac{1}{\gamma+2}} \left(\int_{B_R(x_0)} v^{-1} \ dx\right)^{\frac{\gamma+1}{\gamma+2}}\nonumber \\
& \leq&\displaystyle  \left( \int_{B_R(x_0)} v^{\gamma+1} \ dx\right)^{\frac{1}{\gamma+2}}  \left(\omega_N R^N+ c_1|\rho|_{m}^{m^*}\right)^{\frac{\gamma+1}{\gamma+2}},
\end{eqnarray}
where we used H\"older's inequality and \eqref{eq6mainteo}. 
Since, $B_R(x_0)\subset K_R(x_0)$ by definition, the claim \eqref{eq3bismainteo} now follows from \eqref{eq:auxprop3}.

At last, plugging  \eqref{eq3bismainteo} inside \eqref{eq:sviluppogradestim} leads to the estimate
\begin{eqnarray}\label{eq:sviluppogradestim2}
\displaystyle  v^\gamma(x_0)&\geq & \displaystyle \frac{\omega_N^{\gamma+1}}{\left(\omega_N+ c R^{-N}|\rho|_m^{\frac{Nm}{N-m}}\right)^{\gamma+1}} -  \left(\frac{3}{2}\right)\frac{2^{q-4} q\ \omega_N^{-\frac{2}{q}}}{q-N}|\rho|_{q,K_R(x_0)}^2R^{2(1-\frac{\beta}{2})}  \nonumber\\
&&\displaystyle   - \left(\frac{3}{2}\right)^{q-1}\frac{2^{q-5} q \ \omega_N^{-2+\frac{2}{q}}}{(q-N)^{q-1}(q-1)} |\rho|_{q,K_R(x_0)}^{2(q-1)}R^{(2-\beta)(q-1)} \nonumber\\
 &&\displaystyle -  \frac{2^{q-4} \omega_N^{-1}}{(q-N)^{q-1}} \left(\frac{3}{2}+\frac{1}{q-N}\right)  |\rho|_{q,K_R(x_0)}^{q}R^{q(1-\frac{\beta}{2})} - \frac{2^{q-3} q\ \omega_N^{-\frac{1}{q}}}{q-N}  |\rho|_{q,K_R(x_0)} R^{1-\frac{\beta}{2}}\nonumber\\
 &&\displaystyle - \left(\frac{3}{2}\right)^{q-1}\frac{2^{q-4} q \ \omega_N^{-2+\frac{1}{q}}}{(q-N)^{q}(2q-1)} |\rho|_{q,K_R(x_0)}^{2q-1}R^{(1-\frac{\beta}{2})(2q-1)}.
\end{eqnarray}
As $1-\frac{\beta}{2}=\frac{q-N}{q}$, the estimate \eqref{eq:sviluppogradestim2} yields \eqref{eq:thesisprop:mainestimateCOR0} with $P$ defined by \eqref{eq:defPk}.
\end{proof}

If we assume furthermore that $\rho \in L^q(\RN)\setminus\{0\}$ in the statement of Proposition \ref{prop:mainestimate0}, then it is possible to deduce a global bound independent of the radius and the base point. More precisely we can absorb them in a free parameter $k>0$. This is the content of the next result.

\begin{proposition}\label{prop:mainestimate1}
 Let $N\geq 3$, $\rho \in (L^q(\RN)\cap L^m(\RN) \cap C^1(\RN))\setminus\{0\}$, with $q>N$, $m\in(1,2_*]$. Assume $u \in \X\cap C^3(\RN)$ is a strictly spacelike weak solution of \eqref{eq:BI}. There exist two positive constants $\gamma \in (0,\frac{1}{N})$ depending only on $N$, $c$  depending only on $N$ and $m$, such that for any $k>0$ one has
  \begin{equation}\label{eq:thesisprop:mainestimateCOR}
\begin{array}{lll}
\displaystyle  \inf_{\RN}(1-|\nabla u|^2)^{\frac{\gamma}{2}}&\geq & \displaystyle \frac{\omega_N^{\gamma+1}}{\left(\omega_N+ c |\rho|_m^{\frac{Nm}{N-m}}|\rho|_q^{\frac{Nq}{q-N}} k^{-\frac{Nq}{q-N}}\right)^{\gamma+1}} - P(k),
\end{array}
\end{equation}
where $P(k)$ is given by \eqref{eq:defPk}
\end{proposition}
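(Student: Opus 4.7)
The plan is to deduce Proposition \ref{prop:mainestimate1} from Proposition \ref{prop:mainestimate0} by a judicious choice of the radius $R$ in terms of the free parameter $k$. The key observation is that, for the global $L^q$-datum, we have the trivial monotonicity
$$|\rho|_{q,K_R(x_0)}\le |\rho|_q,\qquad\text{for every } x_0\in\RN,\ R>0,$$
so that the quantity $|\rho|_{q,K_R(x_0)}R^{(q-N)/q}$ which enters the ``bad term'' $P(\cdot)$ in Proposition \ref{prop:mainestimate0} is controlled by $|\rho|_q R^{(q-N)/q}$. Hence, given $k>0$, we set
$$R=R(k):=\left(\frac{k}{|\rho|_q}\right)^{\frac{q}{q-N}},$$
(assuming $|\rho|_q>0$; otherwise $\rho\equiv 0$, $u\equiv 0$ and the conclusion is trivial with both sides equal to $1$), so that $|\rho|_q R^{(q-N)/q}=k$ and consequently
$$R^{-N}=\left(\frac{|\rho|_q}{k}\right)^{\frac{Nq}{q-N}}.$$

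Next, I would verify that $P:[0,+\infty)\to[0,+\infty)$ defined by \eqref{eq:defPk} is non-decreasing: indeed, all the coefficients are strictly positive (recall $q>N\ge3$) and all the exponents appearing in the five monomial terms are positive, whence $P$ is non-decreasing and $P(0)=0$. Since $u\in\X$ embeds in $L^\infty(\RN)$ (by Lemma \ref{lem:tech3}), Lemma \ref{lem:boundedLorentzballX}-(i) guarantees that $K_R(x_0)$ is bounded for every $x_0\in\RN$, so Proposition \ref{prop:mainestimate0} is applicable with the chosen radius. Applying it at any $x_0\in\RN$ with $R=R(k)$, and using the monotonicity of $P$ together with $|\rho|_{q,K_R(x_0)}R^{(q-N)/q}\le k$, yields
$$(1-|\nabla u(x_0)|^2)^{\gamma/2}\ge \frac{\omega_N^{\gamma+1}}{\bigl(\omega_N+c\,|\rho|_m^{\frac{Nm}{N-m}}|\rho|_q^{\frac{Nq}{q-N}}k^{-\frac{Nq}{q-N}}\bigr)^{\gamma+1}}-P(k).$$

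Taking the infimum over $x_0\in\RN$ on the left-hand side gives exactly \eqref{eq:thesisprop:mainestimateCOR}, with the same constants $\gamma$ (depending only on $N$) and $c$ (depending only on $N$ and $m$) produced by Proposition \ref{prop:mainestimate0}. I do not expect any genuine obstacle here: once Proposition \ref{prop:mainestimate0} is granted, the whole content of Proposition \ref{prop:mainestimate1} is the rescaling $R\leftrightarrow k$ that makes the estimate independent of $R$ and the base point, together with the elementary remark that $P$ is monotone. The only mildly delicate point is keeping track of the exponents: checking that $R^{-N}|\rho|_m^{Nm/(N-m)}$ becomes $|\rho|_m^{Nm/(N-m)}|\rho|_q^{Nq/(q-N)}k^{-Nq/(q-N)}$ after substitution, which is a direct algebraic verification.
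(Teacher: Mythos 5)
Your proposal is correct and follows essentially the same route as the paper: the paper also sets $k=R^{(q-N)/q}|\rho|_q$ (equivalently $R=(k/|\rho|_q)^{q/(q-N)}$), replaces $|\rho|_{q,K_R(x_0)}$ by $|\rho|_q$, and substitutes to remove the dependence on $R$ and $x_0$. The only cosmetic difference is that you invoke Proposition \ref{prop:mainestimate0} as a black box and use the monotonicity of $P$ to justify the replacement, whereas the paper performs the replacement inside the derivation leading to \eqref{eq:sviluppogradestim2}; both are valid and your explicit check that $P$ is non-decreasing (positive coefficients and positive exponents, since $q>N\ge 3$) is a worthwhile detail.
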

\begin{proof}
The proof is identical to that of Proposition \ref{prop:mainestimate0} up to \eqref{eq:sviluppogradestim2}, with the only caution that here $|\rho|_{q,K_R(x_0)}$ is replaced by $|\rho|_q$. Set $k:=R^{1-\frac{\beta}{2}}|\rho|_q>0$ and observe that $R^{-N}=k^{-\frac{Nq}{q-N}}|\rho|_q^{\frac{Nq}{q-N}}$. Since $R>0$ is arbitrary, we deduce from \eqref{eq:sviluppogradestim2} that for any $k>0$,
\begin{equation}\label{eq:thesisprop:mainestimate1}
\begin{array}{lll}
\displaystyle  v^\gamma(x_0)&\geq & \displaystyle \frac{\omega_N^{\gamma+1}}{\left(\omega_N+ c |\rho|_m^{\frac{Nm}{N-m}}|\rho|_q^{\frac{Nq}{q-N}} k^{-\frac{Nq}{q-N}}\right)^{\gamma+1}} - P(k),
\end{array}
\end{equation}
where $P(k)$ is given by \eqref{eq:defPk}. Finally, since the constants $\gamma$, $c$, as well as $P(k)$ are independent of $x_0 \in \RN$ (in particular the right-hand side of \eqref{eq:thesisprop:mainestimate1} does not depend on $x_0$) then we readily obtain \eqref{eq:thesisprop:mainestimateCOR}.
\end{proof}

\begin{remark}\label{rem1}
We point out that \eqref{eq:thesisprop:mainestimateCOR} is invariant under the transformation $u\mapsto \tilde u_t$, $\rho\mapsto \bar\rho_t$, where $\tilde u_t(x):=t u \left(\frac{x}{t}\right)$, $\bar\rho_t(x):=\frac{1}{t}\rho\left(\frac{x}{t}\right)$ and $t$ is a given positive number. Such a transformation is naturally associated to Problem \eqref{eq:BI} in the sense  that $u\in\X$ if and only if $\tilde u_t \in \X$  and $u$ is a solution to \eqref{eq:BI} if and only if $\tilde u_t$ is a solution of the same problem with datum $\bar\rho_t$.
\end{remark}

\begin{remark}\label{rem2}
A crucial property used in the proofs of Proposition \ref{prop:mainestimate0}, Proposition \ref{prop:mainestimate1} (see \eqref{eq0:lemtech2}, \eqref{eq5mainteo}) and in that of Lemma \ref{lem:tech2} is the continuous embedding of the space $\X \hookrightarrow L^{p^*}(\RN)$, for all $p\in[2,N)$. This reflects on the fact that for $u\in \X$, $\rho \in L^m(\RN)$, $m\geq 1$, the duality pairing
\begin{equation}\label{eq:dualitypair}
\langle \rho, u \rangle =\int_{\RN} \rho u \ dx
\end{equation}
makes sense only for $m\in[1,2_*]$.
\end{remark}

We now analyze the case $m=1$. The counterpart of Proposition \ref{prop:mainestimate0} is the following 

\begin{proposition}\label{prop:mainestimate20}
 Let $N\geq 3$, $q>N$ and $\rho \in L^1(\RN) \cap C^1(\RN)$. Assume $u \in \X\cap C^3(\RN)$ is a strictly spacelike weak solution of \eqref{eq:BI}. For any given $s>N$ there exist three positive constants  $\gamma$, $c_1$, $c_2$ with $\gamma \in (0,\frac{1}{N})$ depending only on $N$, $c_1$, $c_2$  depending only on $N$ and $s$, such that for any $x_0\in\RN$, $R>0$ one has
 \begin{equation*}
\displaystyle   (1-|\nabla u|^2)^{\frac{\gamma}{2}} (x_0) \geq  \displaystyle 
\begin{cases} \frac{\omega_N^{\gamma+1}}{\left(\omega_N+ c_1 R^{-N} |\rho|_1^{\frac{s}{s-1}}\right)^{\gamma+1}} -  P\left(|\rho|_{q,K_R(x_0)}R^{\frac{q-N}{q}}\right) & \hbox{if $\|u\|_{\X} \leq 1$},\\
 \frac{\omega_N^{\gamma+1}}{\left(\omega_N+ c_2 R^{-N}  |\rho|_1^{\frac{Ns}{Ns-(N+s)}} \right)^{\gamma+1}} -  P\left(|\rho|_{q,K_R(x_0)}R^{\frac{q-N}{q}}\right) & \hbox{if $\|u\|_{\X} > 1$},
\end{cases}
\end{equation*}
where $\omega_N$ is the volume of the unit ball in $\R^N$ and $P=P(k)$ is given by \eqref{eq:defPk}.
\end{proposition}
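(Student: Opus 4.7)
The plan is to run the proof of Proposition \ref{prop:mainestimate0} essentially verbatim, replacing the Sobolev embedding of Lemma \ref{lem:tech2} (which is unavailable when $m=1$, since $m'=\infty$) by the Morrey--Sobolev estimate of Lemma \ref{lem:tech3}, and to keep track of the dichotomy introduced by the latter. Observe first that $u\in L^\infty(\RN)$ by Lemma \ref{lem:tech3}, so Lemma \ref{lem:boundedLorentzballX}-(i) guarantees that $K_R(x_0)$ is bounded and Theorem \ref{teo:mainestimate} applies without any change. This produces precisely the estimate \eqref{eq:sviluppogradestim}, in which all the ``$P$-terms'' depend only on $|\rho|_{q,K_R(x_0)}$; only the first term $R^{-N}\int_{K_R(x_0)} v^{\gamma+1}\,dx$ requires the global integrability of $\rho$ and must be reworked.

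For that term, the strategy is the same two-step argument as in \eqref{eq0:lemtech2}--\eqref{eq:auxprop3}. Since $\rho\in L^1(\RN)\hookrightarrow \X^*$ (the embedding holds because $\X\hookrightarrow L^\infty$, cf.\ Remark \ref{rem2}), $u$ coincides with the minimizer of $I_\rho$, so $I_\rho(u)\le I_\rho(0)=0$ and the elementary bound $\tfrac12 t\le 1-\sqrt{1-t}$ yield
\[
\tfrac{1}{2}|\nabla u|_2^2 \;\le\; \langle \rho,u\rangle \;\le\; |\rho|_1\,|u|_\infty.
\]
Writing $M:=|\nabla u|_2=\|u\|_{\X}$ and applying Lemma \ref{lem:tech3} with the fixed exponent $s>N$, we get $|u|_\infty \le c_1(c_2 M^{2(N+s)/(Ns)} + M^{2/s})$. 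The key observation is that $2(N+s)/(Ns) > 2/s$, so in the regime $M\le 1$ the term $M^{2/s}$ dominates, while in the regime $M>1$ the term $M^{2(N+s)/(Ns)}$ dominates. Solving for $M$ in each case gives respectively $M^2 \le C|\rho|_1^{s/(s-1)}$ and $M^{2(N+s)/(Ns)} \le C|\rho|_1^{(N+s)/(Ns-(N+s))}$, where the second exponent is positive because $s>N\ge 3$ forces $Ns>N+s$.

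The second step is to plug $u\in\X$ into the weak formulation of \eqref{eq:BI}, obtaining the identity
\[
\int_{\RN}\frac{|\nabla u|^2}{v}\,dx \;=\; \int_{\RN}\rho u\,dx \;\le\; |\rho|_1\,|u|_\infty,
\]
and to reapply Lemma \ref{lem:tech3} together with the bound on $M$ just derived. In the regime $\|u\|_{\X}\le 1$ this produces $\int |\nabla u|^2/v \le c_3|\rho|_1^{s/(s-1)}$, while in the regime $\|u\|_{\X}>1$ it produces $\int |\nabla u|^2/v \le c_4|\rho|_1^{Ns/(Ns-(N+s))}$. In either case the identity $\int_{B_R(x_0)} v^{-1} = \int_{B_R(x_0)} v + \int_{B_R(x_0)} |\nabla u|^2/v$ and the bound $v\le 1$ give
\[
\int_{B_R(x_0)} v^{-1}\,dx \;\le\; \omega_N R^N + c_i\,|\rho|_1^{\theta_i},
\]
with $\theta_1=s/(s-1)$ or $\theta_2=Ns/(Ns-(N+s))$ according to the case. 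The same H\"older interpolation as in \eqref{eq:auxprop3}, namely $\omega_NR^N\le (\int_{B_R(x_0)} v^{\gamma+1})^{1/(\gamma+2)}(\int_{B_R(x_0)} v^{-1})^{(\gamma+1)/(\gamma+2)}$, together with $B_R(x_0)\subset K_R(x_0)$, finally gives the desired lower bound
\[
R^{-N}\int_{K_R(x_0)} v^{\gamma+1}\,dx \;\ge\; \frac{\omega_N^{\gamma+2}}{(\omega_N + c_i R^{-N}|\rho|_1^{\theta_i})^{\gamma+1}},
\]
which, inserted into \eqref{eq:sviluppogradestim}, yields the two announced estimates.

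The only delicate point is the careful bookkeeping of the two regimes coming from Lemma \ref{lem:tech3}; everything else is a transcription of Proposition \ref{prop:mainestimate0}. In particular, no new idea is required to handle the $P$-part of the estimate, which is dimension-dependent but independent of the global $L^m$ norm of $\rho$.
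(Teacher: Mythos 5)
Your proposal is correct and follows essentially the same path as the paper's proof: apply Theorem \ref{teo:mainestimate} as in Proposition \ref{prop:mainestimate0}, then replace Lemma \ref{lem:tech2} by Lemma \ref{lem:tech3} to bound $|u|_\infty$ via $|\nabla u|_2$, and split into the two regimes $\|u\|_\X\le 1$ and $\|u\|_\X>1$ according to which power of $M=|\nabla u|_2$ in Lemma \ref{lem:tech3} dominates, exactly as in \eqref{eq1:prop:mainestimate2}--\eqref{eq5:prop:mainestimate2}. The resulting bounds on $R^{-N}\int_{K_R(x_0)}v^{\gamma+1}$, with exponents $\theta_1=\frac{s}{s-1}$ and $\theta_2=\frac{Ns}{Ns-(N+s)}$, coincide with the paper's.
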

\begin{proof}
In this proof, any $c_i$, $i\in\N$ denotes a positive constant depending only on $N$ and $s$.
The argument is similar to the one used in the proof of Proposition \ref{prop:mainestimate0}, but some adjustments are needed for Claim \ref{eq3bismainteo}. To this end, fixing $s>N$, arguing as in \eqref{eq0:lemtech2} and exploiting Lemma \ref{lem:tech3}, it follows that
\begin{equation}\label{eq1:prop:mainestimate2}
\displaystyle \frac{1}{2}|\n u|^2_2  \leq \displaystyle |\rho|_{1} | u|_{\infty} \leq\displaystyle c_1 |\rho|_{1} \left( c_2|\n u|_{2}^{2 \frac{N+s}{Ns}} + |\n u|_{2}^{\frac{2}{s}} \right).
\end{equation}
Assume that $\|u\|_{\X}\leq 1$ (i.e. $|\nabla u|_2\leq 1$). Then from \eqref{eq1:prop:mainestimate2} and since $\frac{N+s}{Ns}>\frac{1}{s}$ we get 
\begin{equation}\label{eq2:prop:mainestimate2}
|\n u|_2  \leq c_3 |\rho|_{1}^{\frac{s}{2(s-1)}}.
\end{equation}
Combining \eqref{eq2:lemtech2}, \eqref{eq2:prop:mainestimate2} we have
\begin{equation}\label{eq3:prop:mainestimate2}
|u|_\infty \leq c_4 |\rho|_1^{\frac{1}{s-1}}.
\end{equation}
From \eqref{eq3:prop:mainestimate2} we deduce that
\begin{equation}\label{eq3bis:prop:mainestimate2}
\left|\irn \rho u \ dx\right| \leq  |\rho|_{1} |u|_{\infty} \leq c_4 |\rho|_1^{\frac{s}{s-1}},
\end{equation}
 and thus, arguing as in \eqref{eq:applweaksol}--\eqref{eq:auxprop3}, we infer that for any $x_0\in\RN$, $R>0$, it holds that
$$R^{-N} \int_{K_{R}(x_0)} v^{\gamma+1} \ dx \geq \frac{\omega_N^{\gamma+2}}{\left(\omega_N+ c_5 R^{-N}|\rho|_1^{\frac{s}{s-1}}\right)^{\gamma+1}}.$$
This gives the counterpart of Claim \ref{eq3bismainteo} and this completes the proof of \eqref{eq:thesisprop:mainestimate2} when $\|u\|_{\X}\leq 1$.

On the other hand, if $\|u\|_{\X}>1$, fixing $s>N$, using \eqref{eq1:prop:mainestimate2} and taking into account that $\frac{N+s}{Ns}>\frac{1}{s}$ and $(N-1)s>N$, we infer that
\begin{equation}\label{eq4:prop:mainestimate2}
|\n u|_2  \leq c_6 |\rho|_{1}^{\frac{Ns}{2(Ns-N-s)}}.
\end{equation}
Hence, as in the previous case, from \eqref{eq2:lemtech2} and \eqref{eq4:prop:mainestimate2} one concludes that
\begin{equation}\label{eq5:prop:mainestimate2}
|u|_\infty \leq c_7 |\rho|_1^{\frac{N+s}{Ns-N-s}},
\end{equation}
and arguing as in \eqref{eq3bis:prop:mainestimate2} and \eqref{eq:applweaksol}--\eqref{eq:auxprop3}, it follows that
$$R^{-N} \int_{K_{R}(x_0)} v^{\gamma+1} \ dx \geq \frac{\omega_N^{\gamma+2}}{\left(\omega_N+ c_8 R^{-N}|\rho|_1^{\frac{Ns}{Ns-N-s}}\right)^{\gamma+1}},$$
for any $x_0\in\RN$, $R>0$.
\end{proof}
Finally, the counterpart of Proposition \ref{prop:mainestimate1} for $m=1$ goes as follows.

\begin{proposition}\label{prop:mainestimate2}
 Let $N\geq 3$, $\rho \in L^q(\RN)\cap L^1(\RN) \cap C^1(\RN)$, with $q>N$. Assume $u \in \X\cap C^3(\RN)$ is a strictly spacelike weak solution of \eqref{eq:BI}. For any given $s>N$ there exist three positive constants  $\gamma$, $c_1$, $c_2$ with $\gamma \in (0,\frac{1}{N})$ depending only on $N$, $c_1$, $c_2$  depending only on $N$ and $s$, such that for any $k>0$ one has
 \begin{equation}\label{eq:thesisprop:mainestimate2}
\displaystyle    \inf_{\RN}(1-|\nabla u|^2)^{\frac{\gamma}{2}} \geq  \displaystyle 
\begin{cases} \frac{\omega_N^{\gamma+1}}{\left(\omega_N+ c_1 |\rho|_1^{\frac{s}{s-1}}|\rho|_q^{\frac{Nq}{q-N}} k^{-\frac{Nq}{q-N}}\right)^{\gamma+1}} - P(k) & \hbox{if $\|u\|_{\X} \leq 1$},\\
 \frac{\omega_N^{\gamma+1}}{\left(\omega_N+ c_2 |\rho|_1^{\frac{Ns}{Ns-(N+s)}}|\rho|_q^{\frac{Nq}{q-N}} k^{-\frac{Nq}{q-N}}\right)^{\gamma+1}} - P(k) & \hbox{if $\|u\|_{\X} > 1$},
\end{cases}
\end{equation}
where $\omega_N$ is the volume of the unit ball in $\R^N$ and $P(k)$ is given by \eqref{eq:defPk}.
\end{proposition}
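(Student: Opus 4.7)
The plan is to mirror exactly the scaling argument used in the proof of Proposition \ref{prop:mainestimate1}, but starting from the refined (local) gradient bound of Proposition \ref{prop:mainestimate20} rather than from Proposition \ref{prop:mainestimate0}. The two statements differ only in the estimate for the ``main'' term
$$R^{-N}\int_{K_R(x_0)} v^{\gamma+1}\,dx,$$
where the $m=1$ case forces us to use Lemma \ref{lem:tech3} (instead of Lemma \ref{lem:tech2}) and therefore to distinguish the cases $\|u\|_{\X}\le 1$ and $\|u\|_{\X}>1$, producing the exponents $\frac{s}{s-1}$ and $\frac{Ns}{Ns-(N+s)}$, respectively. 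Everything else in Proposition \ref{prop:mainestimate20} is identical to Proposition \ref{prop:mainestimate0}, in particular the error term has the same form $P(|\rho|_{q,K_R(x_0)}R^{(q-N)/q})$ with $P$ given by \eqref{eq:defPk}.

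First, I would fix $x_0\in\RN$ and an arbitrary $R>0$, apply Proposition \ref{prop:mainestimate20} and use the monotonicity of $P$ on $[0,+\infty)$ (which is immediate from the explicit formula \eqref{eq:defPk}, all coefficients being non-negative) together with the bound $|\rho|_{q,K_R(x_0)}\le|\rho|_q$, valid since $\rho\in L^q(\RN)$. This gives
$$
(1-|\nabla u|^2)^{\gamma/2}(x_0)\ge \frac{\omega_N^{\gamma+1}}{\bigl(\omega_N+c_i R^{-N}A_i\bigr)^{\gamma+1}}-P\bigl(|\rho|_q R^{(q-N)/q}\bigr),
$$
where $A_1:=|\rho|_1^{s/(s-1)}$ and $A_2:=|\rho|_1^{Ns/(Ns-(N+s))}$ depending on whether $\|u\|_{\X}\le 1$ or $\|u\|_{\X}>1$.

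Next I would introduce the free parameter $k:=R^{(q-N)/q}|\rho|_q$, which, since $R>0$ is arbitrary, ranges over $(0,+\infty)$. Solving for $R$ gives $R^{-N}=k^{-Nq/(q-N)}|\rho|_q^{Nq/(q-N)}$, so that the ``main'' denominator rewrites as
$$
\omega_N+c_i\,|\rho|_1^{\alpha_i}|\rho|_q^{Nq/(q-N)}k^{-Nq/(q-N)},
$$
with $\alpha_1=s/(s-1)$, $\alpha_2=Ns/(Ns-(N+s))$, while the error term becomes exactly $P(k)$. This yields \eqref{eq:thesisprop:mainestimate2} at the chosen $x_0$ for the given $k$.

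Finally, since the right-hand side of the resulting inequality depends neither on $x_0$ nor on $R$ (only on $k$, $|\rho|_1$, $|\rho|_q$, $N$ and $s$), I can take the infimum over $x_0\in\RN$ on the left-hand side to obtain \eqref{eq:thesisprop:mainestimate2} in the claimed form. There is no genuine obstacle here beyond the book-keeping of the exponents: the real analytic work has already been absorbed into Proposition \ref{prop:mainestimate20}, and the present statement is a clean scale-invariant reformulation obtained by optimizing in the free radius $R$ through the substitution $R\mapsto k$. The only point requiring mild care is verifying that $P$ is monotone non-decreasing (so that replacing $|\rho|_{q,K_R(x_0)}$ by $|\rho|_q$ is legitimate), which is transparent from \eqref{eq:defPk} since every coefficient and every power of $k$ appearing there is non-negative.
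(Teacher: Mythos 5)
Your proposal is correct and follows essentially the same route as the paper, which proves Proposition \ref{prop:mainestimate2} by pointing out it is identical to the proof of Proposition \ref{prop:mainestimate1} with the adjustments already made in Proposition \ref{prop:mainestimate20}; your substitution $k=R^{(q-N)/q}|\rho|_q$ is exactly the paper's (since $1-\beta/2=(q-N)/q$), and your observation that $P$ is monotone non-decreasing justifies the replacement $|\rho|_{q,K_R(x_0)}\le|\rho|_q$ cleanly.
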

\begin{proof}
The proof is identical to that of Proposition \ref{prop:mainestimate1} with the necessary adjustments pointed out in the proof of Proposition \ref{prop:mainestimate20}.
\end{proof}

\begin{remark}
We observe that, differently from \eqref{eq:thesisprop:mainestimateCOR}, the condition in \eqref{eq:thesisprop:mainestimate2} is not invariant under the natural transformation $u\mapsto \tilde u_t$, $\rho\mapsto \bar\rho_t$ described in Remark \ref{rem1}. More precisely, for any $t>0$ one has that 
$$\|\tilde u_t\|_{\X}^2=t^N \|\tilde u\|_{\X}^2, \ |\bar \rho_t|_1^{\frac{s}{s-1}}|\bar \rho_t|_q^{\frac{Nq}{q-N}}=t^{\frac{N-s}{s-1}} |\rho|_1^{\frac{s}{s-1}}|\rho|_q^{\frac{Nq}{q-N}}$$ and   $$|\bar \rho_t|_1^{\frac{Ns}{Ns-(N+s)}}|\bar \rho_t|_q^{\frac{Nq}{q-N}}=t^{\frac{N^2}{Ns-(N+s)}} |\rho|_1^{\frac{Ns}{Ns-(N+s)}}|\rho|_q^{\frac{Nq}{q-N}}.$$
\end{remark}

\section{$W^{2,q}_{loc}$ regularity of the minimizer of the Born-Infeld energy}
In this final section, we prove Theorem \ref{teo:w2qregmin}.
\begin{proof}[Proof of Theorem \ref{teo:w2qregmin}]
Let $N\geq 3$, $\rho \in L^q(\RN)\cap L^m(\RN)$, with $q>N$, $m\in[1,2_*]$ and let $u_\rho \in \X$ be the unique minimizer of $I_\rho$. Let $(\rho_{n})_{n} \subset C^\infty_c(\RN)$ be a standard sequence of mollifications of $\rho$ and consider the sequence $(u_n)_n \subset \X$, where $u_n$ is the unique minimizer of $I_{\rho_n}$ for any $n\in \mathbb{N}$. From \cite[Theorem 1.5, Remark 3.4]{BDP} we know that for any $n\in \mathbb{N}$ the function $u_n$ belongs to $C^\infty(\RN)$, it is strictly spacelike in $\RN$, and it is a weak (and also classical) solution to \eqref{eq:BI} with datum $\rho_{n}$.
Moreover, since $\rho_n\to \rho$ in $L^m(\RN)$, as $n\to +\infty$, then from \cite[Theorem 5.3, Corollary 5.4 and Remark 5.5]{BDP}, up to a subsequence, as $n\to +\infty$ we have that $(u_n)_n$ converges to $u_\rho$ weakly in $\X$ and uniformly in $\RN$. 

We fix once for all two positive numbers $s>N$ and $\overline R>0$. We divide the proof in successive steps and even if this is tedious, we keep track of the constants and on the parameters on which they depend. 
 
 \textbf{Step 1:} there exists a positive constant $C$ depending only on $N$, $m$, $s$ and $|\rho|_m$ such that
\begin{equation}\label{eq:unifboundun}
|u_n|_\infty \leq C \ \ \forall n\in \N.\\[12pt]
\end{equation}
 
We first notice that Step 1 holds true, as $u_n\to u_\rho$ uniformly in $\RN$. Nevertheless, for the sake of completeness, we give a direct proof of \eqref{eq:unifboundun}.  Indeed, if $m\in(1,2_*]$ then thanks to \eqref{eq3quatermainteo} and since $|\rho_n|_m\leq |\rho|_m$, which is a standard property of the mollified sequence, we get
\begin{equation}\label{eq0bis:proofmainteo}
|\n u_n|_2\leq c_1 |\rho|_{m}^{\frac{mN}{2(N-m)}},
\end{equation}
for some positive constant $c_1$ depending only on $N$ and $m$. Then from Lemma \ref{lem:tech3} and \eqref{eq0bis:proofmainteo} we readily infer that
 \begin{equation}\label{eq1:proofmainteo}
|u_n|_\infty \leq c_2 |\rho|_{m}^{\frac{m(N+s)}{(N-m)s}} + c_3  |\rho|_{m}^{\frac{mN}{(N-m)s}},
\end{equation}
where $c_2$, $c_3$ are positive constants depending only on $N$, $s$, $m$ and we are done. Similarly, when $m=1$, then, taking into account \eqref{eq2:prop:mainestimate2}--\eqref{eq3:prop:mainestimate2} (if $|\n u_n|_2\leq 1$ ) or \eqref{eq4:prop:mainestimate2}--\eqref{eq5:prop:mainestimate2} (if $|\n u_n|_2> 1$), we deduce that
 \begin{equation}\label{eq2:proofmainteo}
|u_n|_\infty \leq \max\left\{c_4 |\rho|_{1}^{\frac{1}{s-1}}; c_5  |\rho|_{1}^{\frac{N+s}{Ns-N-s}}\right\},
\end{equation}
where $c_4$, $c_5$ are positive constants depending only on $N$ and $s$.
At the end, from \eqref{eq1:proofmainteo} (or \eqref{eq2:proofmainteo} if $m=1$) we obtain \eqref{eq:unifboundun}, with a positive constant $C$ depending only on $s$, $|\rho|_m$, $N$ and $m$.\\

\textbf{Step 2:} there exists a constant $\delta\in[0,1)$ depending only on $\overline R$, $N$, $m$, $|\rho|_m$ (and $s$ if $m=1$) such that for any $n\in \N$ there exists a bounded open neighborhood ${W}_n$ of $\supp(\rho_n)$ ($\supp(\rho_n)$ is the support of $\rho_n$) such that
\beq\label{eq:finalclaim}
|\nabla u_n|_{\infty,{W}_n^\complement} \leq \delta.\\[12pt]
\eeq

Let $\overline R$ be the positive number chosen at the beginning of the proof and denote by $K_{\overline R}^n(x_0)$ the projection of the Lorentz ball associated to $u_n$, centred at $x_0\in \RN$ and of radius $\overline R$.  Thanks to Lemma \ref{lem:boundedLorentzballX}-(i) and \eqref{eq:unifboundun}, for any $x_0\in \RN$, for any $n\in \N$ we have $K_{\overline R}^n(x_0)\subset B_{R^\prime}(x_0)$, with $R^\prime=\sqrt{\overline{R}^2+4C}$. Hence, denoting by $\dist(x,\supp(\rho_n))$  the Euclidean distance between $x \in \RN$ and $\supp(\rho_n)$, and setting $${W}_n:=\{x \in \RN; \ \dist(x,\supp(\rho_n))<R^\prime\},$$ 
 we infer that $K_{\overline{R}}^n(x_0) \cap \supp(\rho_n) = \emptyset$ for any $x_0\in {W}_n^\complement$, for all $n\in \N$. In particular, $|\rho_n|_{q,K_{\overline R}^n(x_0)}=0$ and, if $m\in(1,2_*]$, then, by Proposition \ref{prop:mainestimate0} and taking into account that  $|\rho_n|_m\leq |\rho|_m$, we deduce that
\beq\label{eq:preclaim}
 |\nabla u_n(x_0)|^2 \leq 1 -\frac{\omega_N^{\frac{2(\gamma+1)}{\gamma}}}{\left(\omega_N+ c_6 \overline R^{-N}|\rho|_m^{\frac{Nm}{N-m}}\right)^{\frac{2(\gamma+1)}{\gamma}}} \ \ \ \forall x_0\in {W}_n^\complement, \ \forall n\in \N,
\eeq
where $\gamma \in(0,\frac{1}{N})$ depends on $N$ only, and $c_6$ depends on $N$, $m$ only. Similarly, when $m=1$,  using Proposition \ref{prop:mainestimate20} we deduce that for any $x_0\in {W}_n^\complement$, for any $n\in \N$ it holds
\beq\label{eq:preclaim2}
 |\nabla u_n(x_0)|^2 \leq 1 - \min\left\{ \frac{\omega_N^{\frac{2(\gamma+1)}{\gamma}}}{\left(\omega_N+ c_7 \overline R^{-N} |\rho|_1^{\frac{s}{s-1}}\right)^{\frac{2(\gamma+1)}{\gamma}}},  \frac{\omega_N^{\frac{2(\gamma+1)}{\gamma}}}{\left(\omega_N+ c_8 \overline R^{-N}  |\rho|_1^{\frac{Ns}{Ns-(N+s)}} \right)^{\frac{2(\gamma+1)}{\gamma}}} \right\},
 \eeq
 for some positive constants $c_7$, $c_8$ depending only on $N$ and $s$, and $\gamma$ is as in the previous case.
 
Finally, Step 2 follows immediately from \eqref{eq:preclaim} (or \eqref{eq:preclaim2} if $m=1$), with $\delta \in [0,1)$ depending only on $N$, $m$, $|\rho|_m$ and $\overline R$ (or on $N$, $s$, $|\rho|_1$ and $\overline R$, if $m=1$).\\

\textbf{Step 3:} there exists a constant $\theta \in [0,1)$ independent of $n$ such that

\beq\label{eq:Haa3}
 |\nabla u_n|_\infty \leq \theta \ \ \forall n \in \N.\\[12pt]
\eeq

Let us set
 $$\nu_n(x):=\frac{1}{\sqrt{1-|\nabla u_n(x)|^2}}, \ \ \ x\in \RN,$$
and observe that proving Step 3 is equivalent to finding a constant $\bar\nu\geq 1$ independent of $n$ such that $$ \sup_{\RN} \nu_n \leq \bar\nu \ \ \ \forall n\in \N.$$ 
To this end, let $\delta\in[0,1)$ be the number given by Step 2 and fix a number $\nu_0>\frac{1}{\sqrt{1-\delta^2}}\geq 1$. Then by definition and Step 2 it follows that $(\nu_n-\nu_0)_+$ has compact support for all $n\in \N$, where $(\cdot)_+$ stands for the positive part of a function. This means that the assumptions of Proposition \ref{prop:Lqestnu} are satisfied by $u_n$, for all $n\in\N$, with a uniform value of $\nu_0$ independent of $n\in \N$. Hence, by \eqref{eq:tesiTh36}, \eqref{eq:unifboundun}, and recalling that $|\rho_n|_q \leq |\rho|_q$, we deduce that
\beq\label{eq:Haa0}
|(\nu_n-\nu_0)_+|_q\leq c_9 |\rho|_q \ \ \ \ \ \forall n\in \N,
\eeq
for some positive constant $c_9$ depending only on $\nu_0$, $s$, $N$, $q$, $m$, $|\rho|_m$. In particular, since $$\nu_n\leq (\nu_n-\nu_0)_+ + \nu_0 \ \ \hbox{in $\RN$,}$$ 
then, for any $x_0 \in \RN$, for any $R>0$, using the monotonicity and triangle inequality of the norm $|\nu_n|_{q, B_R(x_0)}$, and exploiting \eqref{eq:Haa0}, we infer that
$$
|\nu_n|_{q, B_R(x_0)}\leq  c_9|\rho|_q + \nu_0 \omega_N^{\frac{1}{q}} R^{\frac{N}{q}} \ \ \ \ \ \forall n\in \N.
$$
Then
\beq\label{eq:Haa1}
 \left(\Mint_{B_R(x_0)} \nu_n^q \, dx\right)^{\frac{1}{q}} \leq  c_9 \omega_N^{-\frac{1}{q}} R^{-\frac{N}{q}}|\rho|_q + \nu_0 \ \ \ \ \ \forall n\in \N.
\eeq
Finally, thanks to Theorem \ref{teo:maingradestimate2}, we get the estimate
\beq\label{eq:Haa2}
\sup_{B_{R/2}(x_0)} \nu_n \leq c_{10} \left[ \left(\Mint_{B_R(x_0)} \nu_n^q \, dx\right)^{\frac{N}{q(q-N)}} + R^{\frac{N}{q-N}} \left(\Mint_{B_R(x_0)} |\rho|^q \, dx\right)^{\frac{N}{q(q-N)}}\right] \left(\Mint_{B_R(x_0)} \nu_n^q \, dx\right)^{\frac{1}{q}},
\eeq
where $c_{10}$ is a positive constant depending only on $N$ and $q$ and $x_0\in \RN$ is arbitrary.
Hence, combining \eqref{eq:Haa1}, \eqref{eq:Haa2}, we infer that
\beq\label{eq:estimateBnun}
\sup_{B_{R/2}(x_0)} \nu_n \leq \overline\nu \ \ \forall n\in \N, 
\eeq
for some constant $\overline\nu\geq 1$ depending only on $\nu_0$, $s$, $N$, $q$, $m$, $|\rho|_m$, $|\rho|_q$ and $R$. Therefore, as $\bar\nu$ is independent of $x_0$ and $x_0\in \RN$ is arbitrary, then \eqref{eq:estimateBnun} yields
\begin{equation}\label{unifboundnun}
\sup_{\RN} \nu_n \leq \overline\nu \ \ \forall n\in \N. 
\end{equation}
As pointed out before, this is equivalent to \eqref{eq:Haa3} and he proof of Step 3 is complete.\\

\textbf{Step 4:} there exists a number $\alpha \in (0,1)$ independent of $n$ such that for any bounded domain $\Omega\subset \RN$ there exists a positive constant $C_1$ independent of $n$, such that 
\beq\label{eq:Step3}
\|u_n\|_{C^{1,\alpha}(\overline\Omega)}\leq C_1  \ \ \forall n\in \N.\\[6pt]
\eeq

One could prove Step 4 by arguing as in \cite[Proof of Theorem 1.6, Step 5]{BIA}, where essentially the Universal potential estimates (see \cite[Theorem 1.4]{KUM})  is applied to \eqref{eq:BI2}, taking into account Step 3. A second approach, followed in \cite[Corollary 3.2]{Haa}, consists in   differentiating \eqref{eq:BI2} with respect to $x_i$, for $i=1,\ldots,N$, yielding the equation \eqref{eq:BIdiff}. Then Step 3 and standard regularity results give the uniform H\"older continuity of the derivatives $(u_n)_i:=\frac{\partial u_n}{\partial x_i}$. This simpler method is somehow more natural and that is the strategy we will adopt here.\\

Let us fix $i\in\{1,\ldots,N\}$. Differentiating \eqref{eq:BI2} with respect to $x_i$ we get that, for any $n\in\N$, the function $(u_n)_i$
weakly, and also classically, satisfies
\begin{equation}\label{eq:BIdiffn}
-\operatorname{div}\left(D^2F (\nabla u_n) \nabla (u_n)_i\right)=(\rho_n)_i \ \ \hbox{in $\RN$}.
\end{equation}
Recalling \eqref{eq0:unifboundD2F} and since $\nu_n(x)=\frac{1}{\sqrt{1-|\nabla u_n(x)|^2}}\geq 1$ for any $x\in\RN$, $n\in\N$, we see that the equation \eqref{eq:BIdiffn} is uniformly elliptic in $\RN$, with ellipticity constant $\lambda=1$. In addition, thanks to \eqref{eq:unifboundD2F} and Step 3 (see \eqref{unifboundnun}), there exists a positive constant $\overline\Lambda$ such that for any $n\in \N$ it holds
$$\sup_{x\in\RN}|D^2F(\nabla (u_n)_i(x))|\leq \overline\Lambda.$$
Now, for any bounded domain $\Omega$ of $\RN$, and since $q>N$, we can apply \cite[Theorem 8.24]{GT} to the solution $(u_n)_i$ of \eqref{eq:BIdiffn} (restricted to $\Omega$) with $\lambda:=1$, $\Lambda:=\overline\Lambda$, $f^j:=\rho_n\delta_{ij} \in L^q(\Omega)$, for $j=1,\ldots,N$, $g:=0$ and $\nu:=0$ (we refer to \cite[(8.1)--(8.6)]{GT} for the notations). We therefore infer that
$$\|(u_n)_i\|_{C^{0,\alpha}(\overline\Omega^\prime)}\leq C \left(|(u_n)_i|_{2,\Omega}+|\rho_n|_{q,\Omega}\right),$$
where $\Omega^\prime\subset\subset \Omega$ , $C=C(N,q,\Lambda/\lambda, d^\prime)>0$, $d^\prime:=\mathrm{dist}(\Omega^\prime, \partial\Omega)$, and $\alpha=\alpha(N,\Lambda/\lambda)\in(0,1)$. In particular, $C$ and $\alpha$ can be chosen independently of $n$ in this estimate. We point out that the number $\alpha$ is also independent of $\Omega$ and $\Omega^\prime$ (see \cite[Remark at page 202]{GT}). Finally, as $|\rho_n|_{q,\Omega}\leq |\rho|_{q,\Omega}$, $|(u_n)_i|_\infty \leq 1$, we readily deduce that 
$$\|(u_n)_i\|_{C^{0,\alpha}(\overline\Omega^\prime)}\leq C_1,$$\\\
where $C_1=C_1(N,q,\Lambda/\lambda, d^\prime,|\rho|_{q,\Omega})>0$ is independent of $n$ and thus, summing for all $i=1,\ldots,N$ and taking into account Step 1 we obtain \eqref{eq:Step3}. The proof of Step 4 is complete.\\

\textbf{Step 5:} for any bounded domain $\Omega^\prime \subset \RN$ there exists a positive constant $C_2$ independent of $n$ such that
\beq\label{eq:tesiStep4}
\|u_n\|_{W^{2,q}(\Omega^\prime)} \leq C_2 \ \ \ \forall n\in \N.\\[12pt]
\eeq
Indeed, fix two bounded domains $\Omega^\prime \subset \subset \Omega \subset \RN$. Observe that as $u_n$ is a smooth strictly spacelike classical solution to \eqref{eq:BI}, we can write the equation in non-divergence form. This yields
\beq
(1-|\nabla u_n|^2)\Delta u_n+\sum_{i,j=1}^N (u_n)_i(u_n)_j (u_n)_{ij}=-(1-|\nabla u_n|^2)^{\frac{3}{2}}\rho_n.
\eeq
In particular, setting $a_{ij}^n:=\delta_{ij} (1-|\nabla u_n|^2) + (u_n)_i(u_n)_j$, 
and $f_n:=-(1-|\nabla u_n|^2)^{\frac{3}{2}}\rho_n$, we see that $u=u_n$ is a strong solution in $\Omega$ of the  equation
$$\sum_{i,j=1}^N a_{ij}^n(x) u_{ij} = f_n.$$
Clearly, by construction, one has that $|f_n|_q\leq |\rho_n|_q\leq|\rho|_q$ and $|a_{ij}^n(x)|\leq 1$ for all $i,j=1,\ldots,N$, for any $x \in \Omega$, $n \in \N$. Moreover, if $\theta\in[0,1)$ is the constant independent of $n$ given by Step 3, then one can check that
$$ \sum_{i,j=1}^N a_{ij}^n(x)\xi_i\xi_j \geq (1-\theta^2)|\xi|^2 \ \ \ \forall \xi \in \RN\ \forall x \in \Omega.$$
In addition, by construction, thanks to Step 4 and exploiting the elementary properties of H\"olderian functions (namely, if $g_1,g_2 \in C^{0,\alpha}(\overline\Omega)$ then $g_1g_2\in C^{0,\alpha}(\overline\Omega)$) we infer that $\|a_{ij}^n\|_{C^{0,\alpha}(\overline\Omega)} \leq c_{15}$, for some constant $c_{15}>0$ independent of $n$, where $\alpha\in (0,1)$ is the number given by Step 4. In particular, this means that we have a uniform control on the moduli of continuity of the coefficients $a_{ij}^n$ in $\overline\Omega$, with respect to $n\in \N$. Therefore, by standard elliptic regularity theory (see \cite[Theorem 9.11]{GT}) we deduce that
\beq\label{eq:unifboundweqomega}
\|u_n\|_{W^{2,q}(\Omega^\prime)} \leq c_{16}(|u_n|_{q,\Omega}+|f_n|_{q,\Omega}),
\eeq
where $c_{16}$ is a positive constant depending only on $N$, $q$, $\theta$, $\Omega^\prime$, $\Omega$, and the moduli of continuity of the coefficients $a_{ij}^n$ in $\Omega^\prime$. Now, from the proof of \cite[Theorem 9.11]{GT} and since $\|a_{ij}^n\|_{C^{0,\alpha}(\overline\Omega)} \leq c_{15}$, we deduce that $c_{16}$ is independent of $n\in \N$.\\

At the end, from \eqref{eq:unifboundweqomega}, thanks to Step 1 and since $|f_n|_q\leq |\rho|_q$ we readily obtain \eqref{eq:tesiStep4} with a constant independent of $n\in \N$. The proof of Step 5 is complete.\\

\textbf{Conclusion:} given a smooth bounded domain $\Omega^\prime \subset \RN$, from Step 5 it follows that $(u_n)_n$ is a bounded sequence in $W^{2,q}(\Omega^\prime)$ and thus, up to a subsequence, $u_n\rightharpoonup \bar u$ in $W^{2,q}(\Omega^\prime)$, for some $\bar u \in W^{2,q}(\Omega^\prime)$, and as $q>N$, by the Rellich-Kondrachov's theorem, up to a further subsequence, it follows that $u_n\to \bar u$ in $C^1(\overline{\Omega^\prime})$. On the other hand, as $u_n\to u_\rho$ in compact subsets of $\RN$ (actually $u_n\to u_\rho$ uniformly in $\RN$, as pointed out at the beginning of the proof) it follows that $u_\rho=\bar u$ in $\Omega^\prime$, and thus $u_\rho \in W^{2,q}(\Omega^\prime)$. In addition, since $u_n\to u_\rho$ in $C^1(\overline{\Omega^\prime})$ then it also follows that $|\nabla u_\rho|\leq \theta$, where $\theta\in[0,1)$ is the number given by Step 3, which is independent of $n$ and $\Omega^\prime$. In particular, from the arbitrariness of $\Omega^{\prime}$ we infer that $u_\rho$ belongs to $W^{2,q}_{loc}(\RN)$ and that $u_\rho$ is strictly spacelike in $\RN$. To conclude the proof it remains to show that $u_\rho$ is the weak solution to \eqref{eq:BI}.
To this end let us fix $\varphi \in C^\infty_c(\RN)$. Then, up to a subsequence, as $\nabla u_n \to \nabla u_\rho$ uniformly in compact subsets of $\R^N$, and since $\frac{1}{\sqrt{1-|\nabla u_n|^2}} \leq \frac{1}{\sqrt{1-\theta^2}}$, where $\theta\in[0,1)$ is the constant given by Step 3, we conclude that
\beq\label{eq1:proofmainteo3}
\lim_{n \to + \infty} \irn \frac{\n u_n \cdot \n \varphi}{\sqrt{1-|\nabla u_n|^2}}\, dx 
=\irn \frac{\n u_\rho \cdot \n \varphi}{\sqrt{1-|\nabla u_\rho|^2}}\, dx.
\eeq
 On the other hand, as $u_n$ is a weak solution to \eqref{eq:BI} with datum $\rho_n$, then, passing to the limit as $n\to +\infty$ in the definition of weak solution (see \eqref{eq:weakBI}) and since $\rho_n \to \rho$ in $L^m(\R^N)$, we have
\beq\label{eq2:proofmainteo3}
\lim_{n \to + \infty} \irn \frac{\n u_n \cdot \n \varphi}{\sqrt{1-|\nabla u_n|^2}}\, dx
=\lim_{n \to + \infty} \irn \rho_n \varphi\, dx = \irn \rho \varphi\, dx.
\eeq
Hence, equating \eqref{eq1:proofmainteo3} and \eqref{eq2:proofmainteo3} we get that $u_\rho$ satisfies the definition of weak solution for \eqref{eq:BI}, for any test function $\varphi \in C^\infty_c(\RN)$. Finally, as $|\nabla u_\rho|\leq \theta<1$ in $\RN$ then arguing by density we infer that the same conclusion holds true for any test function $\varphi \in \X$. Therefore $u_\rho$ is a weak solution to \eqref{eq:BI} and this completes the proof of Theorem \ref{teo:w2qregmin}.
 \end{proof}

\section{Declarations}
\begin{itemize}
\item The authors have no relevant financial or non-financial interests to disclose.
\item The authors have no competing interests to declare that are relevant to the content of this article.
\item Data sharing not applicable to this article as no datasets were generated or analysed during the current study.
\end{itemize}
\section*{Acknowledgments}
\thanks{We wish to thank Akseli Haarala for the useful discussions on \cite{Haa}. We are grateful to the referees for their valuable comments which helped us to improve our manuscript and in particular the statement of Lemma \ref{lem:boundedLorentzballX} and Step 4 of the proof of Theorem \ref{teo:w2qregmin}.}

\end{document}